\documentclass[11pt]{amsart}%
\usepackage{palatino, mathpazo}
\usepackage{amsfonts}
\usepackage{amsmath}
\usepackage{amssymb,latexsym,xcolor}
\usepackage{graphicx}
\usepackage{harpoon}
\usepackage[mathscr]{eucal}
\usepackage{amssymb}%
\usepackage[all]{xy}
\usepackage[
linktocpage=true,colorlinks,citecolor=magenta,linkcolor=blue,urlcolor=magenta]{hyperref}
\providecommand{\U}[1]{\protect \rule{.1in}{.1in}}

\newtheorem{theorem}{Theorem}[section]
\newtheorem{theorem*}{Theorem}

\newtheorem{conjecture}[theorem]{Conjecture}
\newtheorem{corollary}[theorem]{Corollary}
\newtheorem{definition}[theorem]{Definition}

\newtheorem{lemma}[theorem]{Lemma}
\newtheorem{proposition}[theorem]{Proposition}
\newtheorem{Theorem}{Theorem}

\theoremstyle{remark}
\newtheorem{remark}[theorem]{Remark}

\newtheorem{question}{Question}

\numberwithin{equation}{section}

\newcommand{\R}{{\mathbb{R}}}

\newcommand{\Z}{{\mathbb{Z}}}
\newcommand{\C}{{\mathbb{C}}}

\newcommand{\n}{{\mathbf{n}}}
\renewcommand{\l}{{\lambda}}

\begin{document}
\title[Classification of spherical metrics on tori]{Classification of spherical metrics on tori with four singularities, I: half periods }
\author{Erjuan Fu}
\address{Beijing Institute of Mathematical Sciences and Applications, Beijing, 101408, China}
\email{fej.2010@tsinghua.org.cn, ejfu@bimsa.cn}
\author{Chang-Shou Lin}
\address{Department of Mathematics, National Taiwan University, Taipei 10617, Taiwan }
\email{cslin@math.ntu.edu.tw}

\begin{abstract}
Classifying the spherical metrics on a torus $E_\tau$  with $4\pi$ conic angle at each half period point\, ${\omega_k}/{2}, k=0,1,2,3$\,  is equivalent to classify solutions of the following curvature equation
\begin{align}\label{eq0731093154}
    \Delta u+e^u=4\pi\sum_{k=0}^3\delta_{\frac{\omega_k}{2}}\text{\ on\ }E_\tau
\end{align}
where $\tau\in \mathbb{H}:=\{z\in \mathbb{C}\mid \mathrm{Im} \, z>0\}$ and $\delta_p$ is the Dirac measure at $p\in E_\tau$. 
 By constructing a multiple Green function 
$$G_2(z_1, z_2;\tau):=G(z_1-z_2;\tau)-\frac{1}{2}\sum_{j=0}^3\left(G(z_1-\frac{\omega_j}{2};\tau)+G(z_2-\frac{\omega_j}{2};\tau)\right),  $$
in terms of the Green function $G(z;\tau)$ on $E_\tau$, 
we classify the solutions of (\ref{eq0731093154}) into two types:  \emph{special} and  \emph{non-special}. Furthermore, we obtain the following conclusion about the solutions of (\ref{eq0731093154}):
\begin{enumerate}
\item any special solution is an even function and 
the set of special solutions is isomorphic to $SL(2,\mathbb{C})/SU(2)$ for all $\tau\in \mathbb{H}$.
\item   a non-special solution exists if and only if $\tau\in \mathcal{E}$.
Moreover, if $\tau\in \mathcal{E}$, then
there are  six one-parameter families of nonspecial solutions. 
\end{enumerate}
where
$$\mathcal{E}:=\left\{\tau\in \mathbb{H}\mid G(z;\tau) \,\,\text{has exactly 5 critical points.}\right\}.$$
The set $\mathcal{E}$ is completely determined in \cite{CLW2018, Lin}, which  is a union of countable many open triangular domains.
As a byproduct, we  completely determine and classify the critical points of 
$G_2$ and then obtain the degeneracy criterion of critical points for $G_2$, 
which  may be of independent interest.
\end{abstract}

\thanks{Erjuan Fu was supported by NSFC (No. 12401188) and BIMSA Start-up Research Fund.}

\maketitle
\tableofcontents
\setcounter{tocdepth}{1}

\section{Introduction}\label{sec-intro}


We start with a classical problem in geometry: finding the spherical metric on punctured Riemann surface with conic singularities. 
By a \emph{spherical metric}, we mean a metric of constant positive curvature, in particular, we let the curvature identically equal $1$. 

To be specific, 
let $S$ be a compact Riemann surface of genus $g$
and $$\Sigma:=\{p_1, \cdots, p_n\}\subseteq S$$ be a finite set,  
we focus on how to understand (decribe, classify)  the \emph{spherical metric} $\rho(z)|dz|$ on $S\setminus \Sigma$ with \emph{conic singularities} along $\Sigma$, i.e., $\rho(z)$ is positive smooth on $S\setminus \Sigma$ and satifies 
\begin{align*}
&\Delta \log \rho+ \rho^2=0 \quad \text{on} \, \, S\setminus \Sigma,\\
&\rho(z)\sim |z-p_j|^{\alpha_j-1}, \quad (z\to p_j),
\end{align*}
where $ \alpha_j, \, j=1,\cdots, n$ are positive numbers, i.e., $2\pi \alpha_j$ is the total {angle} around the singularity $p_j$.
By letting $u=\log(2\rho^2)$, the problem is equivalent to study 
the following curvature equation 
\begin{equation}\label{eqn-mfe}
\Delta u+ e^{u}=4\pi \sum\limits_{j=1}^n(\alpha_j-1)\delta_{p_j} \quad \text{on} \, \, S,
\end{equation}
where $\delta_{p_j}$ is the Dirac measure at $p_j\in S$. 

%

Since (\ref{eqn-mfe}) possesses the so-called bubbling phenomena (see \cite{ChenL2003,ChenL2015,ChenL2022}), the existence or non-existence problems are considered difficult questions to be solved, unless the parameter $\alpha:=(\alpha_1, \cdots, \alpha_n)$ satisfies some non-critical conditions. In this case, a degree counting formulas for solutions of (\ref{eqn-mfe}) had been derived (see \cite{ChenL2015,ChenL2022}) and solutions of (\ref{eqn-mfe}) exist if the degree does not vanish. For the critical case, up to now, there are only a few results to gurantee the existence of solutions of (\ref{eqn-mfe}).
Recently the problem 
has been getting a lot of attention from mathematician with different research backgrounds. Indeed, the problem can be investigated from the aspects of differential geometry \cite{EG2015,MP2016,MP2019}, the complex function theory \cite{Eremenko2004, Eremenko2020, EG2015},  monodromy theory and premodular theory \cite{CLW2015,CL2018,CL2020,CL2021,FL2025,Kuo,Lin,LW2010,
LW2017,LW2017-1,LW2020,LY2018}, algebraic geometry \cite{EGMP2022,LSX2021,Tarantello2010},  and partial differential equations \cite{ChenL2015,ChenL2022,MP2019}. 

We consider the case of $g=1$, i.e., $S=E_\tau:=\mathbb{C}/\Lambda_\tau$ is a torus, where $\Lambda_\tau=\mathbb{Z}+\mathbb{Z}\tau$ and   $\tau\in \mathbb{H}:=\{z\in \mathbb{C} \mid \mathrm{Im}z>0\}$. 
 For the case of $g=0$, we refer to the recent works \cite{Eremenko2004,Eremenko2020,EG2015,EGT2014}. 
%
%
%
%
%
%
%
The curvature equation (\ref{eqn-mfe}) becomes 
\begin{equation}\label{mfe-torus}
\Delta u+e^u=4\pi \sum_{j=1}^n(\alpha_j-1) \delta_{p_j} \qquad \text{on}\,\, E_\tau.
\end{equation}
In a series of papers (this paper is the first one), we are only insterested in the situtation when $0<\alpha_j-1\in \mathbb{N}^+$ for all $j$. If $\sum_{j=1}^n(\alpha_j-1)$ is an odd integer, then equation (\ref{mfe-torus}) belongs to the category of non-critical case. Thus, as mentioned earlier, the degree counting formulas, derived in \cite{ChenL2015} and \cite{ChenL2022}, can be applied to study (\ref{mfe-torus}). Therefore, we always assume that 
\begin{equation}\label{even-cond}
\sum_{j=1}^n(\alpha_j-1) \in 2\mathbb{N}^+
\end{equation}

There is an interesting observation concerning (\ref{mfe-torus})  under the assumption (\ref{even-cond}). To investigate the equation (\ref{mfe-torus}), we find the concept of developing map  is particularly useful. By the uniformization theorem, any solution $u(z)$ can be obtained through a developing map $h(z)$:
\begin{equation}\label{uz}
u(z)=\log \frac{8|h'(z)|^2}{(1+|h(z)|^2)^2}
\end{equation}
In the literature, formula (\ref{uz}) is called the Liouville theorem. Under our assumption on $\alpha$, it is not hard to see that the developing map $h(z)$ can always be chosen so that it is meromorphic on $\mathbb{C}$ and satisfy 
\begin{equation}\label{hz}
h(z+\omega_j)=\lambda_jh(z), j=1,2, \quad \lambda_j\in \mathbb{C} \, \text{and} \, |\lambda_j|=1,
\end{equation}
where $\omega_1:=1$ and $\omega_2:=\tau$. 
Obviously, if (\ref{hz}) holds for $h(z)$, then it also holds for $e^th(z)$ with $t\in \mathbb{R}$. By substituting $e^th(z)$ to (\ref{uz}), we obtain a real one-parameter family of solutions: \footnote{In the rest of this paper, we will omit ``real" and always use real numbers to parametrize a family of solutions.
} 
\begin{equation}\label{ut}
u_t(z)=\log \frac{8e^{2t}|h'(z)|^2}{(1+e^{2t}|h(z)|^2)^2}.
\end{equation}
%
%
 A consequece is the a priori bound of (\ref{mfe-torus}) does not exist. This is one of the reasons why it is difficult to solve (\ref{mfe-torus}) by applying analytic techniques only. 
We call $u_t$ is generated by $u$. 
 Generally, 
given a solution  $v$ of (\ref{mfe-torus}) with developing map $g$, we call $v$  is  generated by $u$  if 
$$g=Rh:=\frac{ah+b}{ch+d}\qquad  \text{for some}\,\, R:=\begin{pmatrix}a&b\\ c&d\end{pmatrix}\in SL(2, \mathbb{C}).$$  
This observation was noticed in \cite{CLW2015} and \cite{LW2010}, where $n=1$ and $\alpha_1=3$ were assumed. Indeed, if $n=1$ and $\alpha_1:=2m+1\in 2\mathbb{N}^++1$, then the second author proved the following result.

\begin{Theorem}[\cite{Lin}]
Let $m\in \mathbb{N}^+$ and consider the curvature equation 
\begin{equation}\label{mfe-8}
\Delta u+e^u=8\pi m\delta_0 \qquad \text{on}\quad E_\tau.
\end{equation}
Then \begin{enumerate}
\item any solution $u$ of (\ref{mfe-8}) generates a one-parameter family of solutions;
\item the set $\{\tau\in \mathbb{H}\mid (\ref{mfe-8}) \,\text{has a solution.}\}$ is an open subset of $\mathbb{H}$. 
\end{enumerate}
\end{Theorem}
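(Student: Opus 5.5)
Part (1) follows from the developing-map setup of the introduction, and part (2) is a non-degeneracy argument; I treat them in that order.

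For part (1), take a solution $u$ of (\ref{mfe-8}). By the Liouville representation (\ref{uz}), $u$ has a developing map $h$ with Schwarzian derivative $\{h,z\}=-2(u_{zz}-\tfrac12 u_z^2)$. Here $I(z):=u_{zz}-\tfrac12u_z^2$ is an elliptic function; a local computation at $0$ shows its only pole is a double pole at $0$ with leading coefficient $-m(m+1)$ (since $\alpha_1-1=2m$). Consequently $I=-(m(m+1)\wp(z)+B)$ for a constant $B$, so $h=y_1/y_2$ with $y_1,y_2$ solving the Lamé equation $y''=(m(m+1)\wp+B)y$. Because $\alpha_1=2m+1$ is an odd integer, the local exponents at $0$ are $-m$ and $m+1$, which differ by the integer $2m+1$. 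The key point is that for the Lamé equation with integer $m$ the local monodromy is trivial (no logarithms; this is classical and can be checked via the apparent-singularity condition, which here holds automatically by evenness of $\wp$). Hence $h$ is single-valued meromorphic on $\mathbb{C}$.

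I then analyze the global monodromy. The monodromy group is generated by $M_1,M_2$, which commute because $\pi_1(E_\tau\setminus\{0\})$ is free on two generators whose commutator is the loop around $0$, and that loop has trivial monodromy. Since $u$ is single-valued, the monodromy lies in $PSU(2)$, up to conjugacy. Commuting unitary matrices can be simultaneously diagonalized. The case where both are $\pm I$ would make $h$ elliptic of degree $2m+1$ with $u$ single-valued; I would rule this out using the evenness/degree argument (for such $h$ the metric would force total angle constraints incompatible with Gauss–Bonnet, or more precisely $\{h,z\}$ would give $B$ with a reducible Lamé structure contradicting unitarity). In the remaining case we may normalize $h(z+\omega_j)=\lambda_j h(z)$ with $|\lambda_j|=1$, which is (\ref{hz}). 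Then $e^t h$ has the same monodromy and gives the family $u_t$ of (\ref{ut}). The members are distinct because $h$ is nonconstant, which proves (1).

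For part (2), I plan an implicit-function argument. Via the Hermite–Halphen ansatz, $h$ can be written as $e^{cz}\prod_{i}\sigma(z-a_i)/\sigma(z+a_i)$ with $\{a_i\}$ satisfying the algebraic system defining the Lamé curve. The condition $|\lambda_j|=1$ translates into a real condition on $(a_i,c)$, namely that a point of the hyperelliptic Lamé curve $Y_m(\tau)$ is mapped by the Hecke (or Green) function to the real-period locus $\{(r,s)\in\mathbb{R}^2\}\setminus\frac12\mathbb{Z}^2$. Existence at $\tau_0$ thus means that a certain analytic map $\tau\mapsto$ (point on the family of curves) hits a real two-dimensional target. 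Openness follows once I show the corresponding Jacobian is nonzero. Equivalently, the linearized operator $\Delta+e^{u}$ has kernel spanned exactly by $\partial_t u_t$ (coming from the scaling family), and this kernel is not an obstruction in the $\tau$-direction.

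The main obstacle is this non-degeneracy. My first approach would be to show that the "pre-modular form" $Z(r,s;\tau)$ (a holomorphic function of $\tau$ for fixed real $(r,s)$, whose zeros are exactly the existence points) has only simple zeros in the relevant variable, or more simply to argue that since the zero set of a nonconstant holomorphic function $\tau\mapsto Z_{r,s}(\tau)$ moves continuously as $(r,s)$ varies in an open set, existence persists under perturbation of $\tau$. Concretely, I would take the map $(r,s,\tau)\mapsto Z$, holomorphic in $\tau$, and use Hurwitz's theorem to follow its zeros as $(r,s)$ ranges over a neighborhood. Openness in $\tau$ then follows provided the zero set of $Z$ projects onto an open set, which holds because a complex-analytic hypersurface in the $\tau$-variable parametrized by two real parameters has open image wherever $\partial_\tau Z\neq0$ or the zero is isolated. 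This last step is where the degenerate situations need care.
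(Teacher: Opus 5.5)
The paper gives no proof of this statement. It is quoted from \cite{Lin} as motivation, and the closest argument in the text is Theorem \ref{lem-gen-u}, which does for (\ref{mfe}) what your part (1) does for (\ref{mfe-8}). Your proposal therefore has to stand on its own. Part (1) is right in outline but needs three repairs.

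\emph{Normalization.} With the paper's convention $S[h]=u_{zz}-\frac12u_z^2$ (no factor $-2$), one has $S[h]\sim\frac{1-(2m+1)^2}{2}z^{-2}=-2m(m+1)z^{-2}$. The Lam\'e equation $y''=(m(m+1)\wp+B)y$ you reach is still correct, and the absence of logarithms follows directly from Lemma \ref{lem-app}.

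\emph{Excluding $M_1,M_2\in\{\pm I\}$.} What you wrote is not an argument, and ``degree $2m+1$'' is wrong. If both monodromies are $\pm I$, then $h$ is elliptic. It is unramified on $E_\tau\setminus\{0\}$, since $h'=-W/y_2^2$ and the zeros of $y_2$ are simple, and it has ramification index $2m+1$ at $0$. Riemann--Hurwitz then gives $0=-2d+2m$, so $\deg h=m<2m+1$, a contradiction. Equivalently, integrating (\ref{mfe-8}) gives $\int e^u=8\pi m$, while an elliptic developing map of degree $d$ gives $8\pi d$. This count is genuinely needed: for (\ref{mfe}) it gives $d=2$ with local degree $2$ at each half period, and there the case $(s,r)\in(\frac12\mathbb{Z})^2$ does occur.

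\emph{Exactly one parameter.} Distinctness of the $u_t$ only shows that $u$ generates \emph{at least} a one-parameter family. To get exactly one, use the step in Theorem \ref{lem-gen-u}. If $Rh$ also has unitary monodromy, then $\overline{R}^TR$ commutes with a diagonal monodromy matrix whose eigenvalues are distinct. Hence $\overline{R}^TR$ is diagonal, and $Rh$ defines one of the $u_t$.

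Part (2) has a genuine gap, exactly at the step you flag. Hurwitz's theorem only shows that each real $(r,s)$ near $(r_0,s_0)$ gives a zero $\tau(r,s)$ of $Z_{r,s}$ near $\tau_0$. So near $\tau_0$ the existence set contains the image of a map from an open subset of $\mathbb{R}^2$ into $\mathbb{C}\cong\mathbb{R}^2$, and such an image need not be open.

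Your criterion ``$\partial_\tau Z\neq0$ or the zero is isolated'' does not fix this. For example, $Z=\tau-\tau_0-(r-r_0)^2-(s-s_0)^2$ is holomorphic in $\tau$ with $\partial_\tau Z=1$, yet its image is a ray. When $\partial_\tau Z\neq0$, the implicit function theorem gives $\partial_r\tau=-Z_r/Z_\tau$ and $\partial_s\tau=-Z_s/Z_\tau$. The real Jacobian of $(r,s)\mapsto\tau$ is then $\mathrm{Im}(\overline{Z_r}\,Z_s)/|Z_\tau|^2$, and you need this to be nonzero, or at least the local degree to be nonzero.

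That is a substantive non-degeneracy statement. For $m=1$ we have $Z_{r,s}(\tau)=\zeta(r+s\tau)-r\eta_1-s\eta_2$. The Legendre relation together with the Hessian formula of Section \ref{sec-deg} give $\mathrm{Im}(\overline{Z_r}Z_s)=-4\pi^2\,\mathrm{Im}\,\tau\,\det D^2G(r+s\tau)$. So your condition is precisely non-degeneracy of the nontrivial critical point $\theta$ of $G$, which the paper notes was proved only recently in \cite{CFL2025-1}.

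For general $m$ you would need one of the following:
\begin{itemize}
\item the analogous non-degeneracy (or a sign or degree statement) for the corresponding critical points of the multiple Green function;
\item a description of the existence set by an open condition, as is available for $m=1$ through the trivial critical points of $G$.
\end{itemize}
The proposal supplies neither. Your remark about the kernel of $\Delta+e^u$ is asserted rather than proved. Even granting it, Lyapunov--Schmidt leaves a real scalar equation in $(t,\tau)$, and its solvability for all $\tau$ near $\tau_0$ is the same non-degeneracy question.
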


In this paper, we focus on the following curvature equation:
\begin{equation}\label{mfe}
\Delta u+e^u=4\pi \sum_{j=0}^3\delta_{\omega_j/2}\qquad \text{on}\quad E_\tau,
\end{equation}
where $\omega_0:=0$ and $\omega_3:=1+\tau$. 
Surprisingly, there is always a solution  $u(z)$ of (\ref{mfe}) such that it generates a three-parameter family of solutions. 
Indeed, for any $\tau\in \mathbb{H}$, we could find a solution $u_0$ with  developing map $h_0(z)$ such that the factors $\lambda_1,\lambda_2$ in (\ref{hz}) are equal to 1,  then $Mh_0$ satisfies (\ref{hz}) for any $M\in SL(2,\mathbb{C})$.
 By the Liouville theorem, we get a solution $u_M$ of (\ref{mfe}) with $Mh_0$ as a developing map and then $u_M$  is generated by $u_0$.
Furthermore, $u_{M_1}=u_{M_2}$ if and only if $M_1=UM_2$ for some $U\in SU(2)$.  Then the family $\{u_M\}$ generated by $u$ is a real three-parameter family, because  $SL(2,\mathbb{C})/SU(2)$ is a three dimensional real analytic manifold.

\begin{definition}\label{def-u}
Throughout the paper, a solution of  (\ref{mfe}) is called \emph{special} solution if it  generates  a  three-parameter family of solutions; 
 and a solution of (\ref{mfe}) is called \emph{nonspecial} if  it  generates  a   one-parameter family of solutions. 
\end{definition}
One of our main theorems is to classify all the solutions of (\ref{mfe}). 

\begin{theorem}\label{thm-1.1}
Let $\tau\in\mathbb{H}$. 
We have the following conclusion about the solutions of (\ref{mfe}).
\begin{enumerate}
\item Any solution is either special or nonspecial. 
\item Any special solution is an even function and 
the set of special solutions is isomorphic to $SL(2,\mathbb{C})/SU(2)$.
\item The curvature equation (\ref{mfe})  has a nonspecial solution if and only if the following curvature equation 
\begin{equation}\label{mfe-0}
\Delta u+e^u=8\pi \delta_{0}\qquad \text{on}\quad E_\tau,
\end{equation} has a solution. 
Moreover, if (\ref{mfe-0}) possesses a solution, then (\ref{mfe})  exactly contains six one-parameter families of nonspecial solutions. 
\end{enumerate}
\end{theorem}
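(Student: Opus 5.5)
We work throughout with the developing map and the associated second-order Fuchsian ODE. Let $u$ solve (\ref{mfe}) with developing map $h$ normalized as in (\ref{hz}). The Schwarzian $\{h,z\}$ is elliptic, with double poles at the four half periods and local exponents $-1/2, 3/2$ there. So $h=y_1/y_2$ for a basis of solutions of the Lamé-type equation
\[
y''=\Big(\sum_{k=0}^3\tfrac34\wp\big(z-\tfrac{\omega_k}{2}\big)+\sum_{k=0}^3 c_k\,\zeta\big(z-\tfrac{\omega_k}{2}\big)+B\Big)y ,
\]
with the apparent-singularity condition at each $\omega_k/2$.

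By the Liouville formula, $u$ is determined by $h$ up to $SU(2)$. The monodromy of the ODE is therefore unitary, and abelian, since (\ref{hz}) says it is diagonal. This gives a dichotomy. If $\lambda_1=\lambda_2=1$, then $Mh$ satisfies (\ref{hz}) for every $M\in SL(2,\mathbb C)$, so $u$ is special. If $(\lambda_1,\lambda_2)\ne(1,1)$, any $Rh$ satisfying (\ref{hz}) must commute projectively with $\mathrm{diag}(\lambda_j^{1/2},\lambda_j^{-1/2})$. Hence $R$ is diagonal (or anti-diagonal, which is absorbed by $SU(2)$), and modulo $SU(2)$ we only get the family $e^t h$; so $u$ is nonspecial. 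This proves (1).

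For (2), the strategy has four steps.
\begin{enumerate}
\item Take trivial monodromy. Then $h$ is an elliptic function with simple critical points exactly at the four half periods and no other critical points. Poles of $h$ are simple, because a double pole would be a critical point of $1/h$.
\item Any such $h$ has $h'$ vanishing to order exactly one at each $\omega_k/2$. Post-compose with a Möbius map to make $h$ have a single pole, which is impossible for degree one. Instead, count degree by Riemann--Hurwitz: $0=2\deg h\cdot 0+\#\text{ramification}$ gives $2\deg h\cdot(0)$ Euler characteristic $0=\deg h\cdot 0 + 4$? That is wrong, so we redo the count on the torus: $\chi(E)=0=\deg h\cdot\chi(\mathbb P^1)-4$, hence $\deg h=2$.
\item The degree-two elliptic functions with ramification exactly at the half periods are $M\wp$ with $M\in PSL(2,\mathbb C)$. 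This gives the identification with $SL(2,\mathbb C)/SU(2)$.
\item Evenness follows because $\wp$ is even, so $u$ is even.
\end{enumerate}

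For (3), take a nonspecial solution. I would use the multiple Green function $G_2$ from the abstract as the key tool. In the ODE picture, nonspecial solutions correspond to unitary, nontrivial abelian monodromy. The plan is to construct a correspondence with solutions of (\ref{mfe-0}) via an isogeny or Darboux transformation. Concretely:
\begin{itemize}
\item Write $h$ in the form $h=e^{cz}\,\dfrac{\sigma(z-a)\sigma(z-b)\cdots}{\cdots}$, so that the zeros and poles of $h'$ at the half periods are encoded by points $(z_1,z_2)$.
\item Derive that $(z_1,z_2)$ must be a critical point of $G_2$ with $z_1\ne\pm z_2$, and not a half-period configuration.
\item Show that the critical points of $G_2$ are of three kinds: trivial ones, corresponding to special solutions; ones obtained from the nontrivial critical pairs $\pm p$ of $G$, which exist iff $G$ has five critical points, i.e.\ iff (\ref{mfe-0}) is solvable by the Chai--Lin--Wang theory; and nothing else.
\item Count six families. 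These come from the three choices of pairing the half periods, combined with the two points $\pm p$ (or equivalently, the six ways to distribute the four singularities into two pairs times the symmetry). Each gives one $e^t$-family.
\end{itemize}

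The main obstacle is the complete classification of the critical points of $G_2$ and excluding any others. I would first try to reduce $\nabla G_2=0$ via the addition formula for $\zeta$ to an equation in $\wp(z_1),\wp(z_2)$ relating to $\nabla G(p)=0$ for $p=z_1+z_2$ or $z_1-z_2$.
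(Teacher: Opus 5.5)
Your arguments for (1) and (2) are sound. Part (1) is essentially the paper's argument: the monodromy is unitary, and when $s\notin\frac12\mathbb{Z}$ it has distinct eigenvalues, which forces $R$ to be diagonal modulo $SU(2)$.

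For (2) you take a different and more elementary route. The paper quotes from \cite{FL2025} that trivial monodromy forces $T=0$, and it uses the explicit solutions $e^{-\eta_3 z/2}\sigma(z-a)\sigma(z+a)/\prod_j\sigma(z-\frac{\omega_j}{2})^{1/2}$ of $\mathcal{L}(0)$. You instead apply Riemann--Hurwitz to get $\deg h=2$, with ramification exactly at $E_\tau[2]$. After a M\"obius map sending $h(0)$ to $\infty$, $h$ has a double pole at $0$ and no other poles, so $h=\alpha\wp+\beta$. This avoids the Lam\'e machinery entirely. One correction: $h$ can have a double pole at a half period (e.g.\ $h=\wp$ at $0$), so ``poles are simple'' holds only off $E_\tau[2]$. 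This does not affect your count.

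Part (3) has a genuine gap. The step ``derive that $(z_1,z_2)$ must be a critical point of $G_2$'' is the whole content, and nothing in your sketch produces it. You need a mechanism that turns unitarity of the monodromy into an equation on points of $E_\tau$. The paper proceeds as follows.
\begin{enumerate}
\item The apparent parameters form $AP=V_1\cup V_2\cup V_3$, each a line in $T$, so $\mathcal{L}(u)=\mathcal{L}_\ell(T)$.
\item Each $T\in V_\ell$ is the image of a pair $\{a,a-\frac{\omega_\ell}{2}\}\in Y_\ell$ under the branched double cover $T^{(\ell)}$.
\item When $Q_\ell(T)\neq0$, the monodromy data satisfy $r+s\tau=2a-\frac12\omega_\ell-\frac12\omega_3$ and $r\eta_1+s\eta_2=\zeta(2a)-\frac12\eta_\ell-\frac12\eta_3$.
\item Since $\eta$ is $\mathbb{R}$-linear and $\tau\eta_1-\eta_2=2\pi i$, we get $(s,r)\in\mathbb{R}^2$ if and only if $\zeta(2a)=\eta(2a)$, i.e.\ $2a$ is a critical point of $G$.
\end{enumerate}
Your ansatz $h=e^{cz}\sigma(z-a)\cdots$ gestures at this. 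But without computing the monodromy of the common eigenfunctions in terms of their zeros, there is no equation to work with. The obstacle you name, classifying critical points of $G_2$, is comparatively routine once (\ref{ccccc17}) forces $z_2=-z_1$ or $z_1-z_2=\frac{\omega_\ell}{2}$. After that the critical point system collapses to $\nabla G(2z_1)=0$.

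Two further points in your outline would fail as stated.

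First, the list ``trivial ones, corresponding to special solutions; ones from $\pm p$; nothing else'' is incomplete. Besides the continuum $\{z,-z\}$, which corresponds to $T=0$ (the special solutions), $G_2$ always has twelve isolated critical points with $2z_1=\frac{\omega_t}{2}$, $t\neq\ell$. These are the ramification points of $T^{(\ell)}$ and map to zeros of $Q_\ell$. There the monodromy is not completely reducible, so they correspond to no solution at all. A naive ``critical point $\Rightarrow$ solution'' correspondence would produce spurious nonspecial families for every $\tau$.

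Second, the count of six requires that a one-parameter family corresponds to a whole fiber $\{\{z_1,z_2\},\{\frac{\omega_j}{2}-z_1,\frac{\omega_j}{2}-z_2\}\}$ of $T^{(\ell)}$, not to a single critical point. For each $\ell$, $2z_1=\pm\theta$ gives four critical points in $Y_\ell$, grouped into two fibers, hence two families and $3\times 2=6$ in total. Your ``three pairings times $\pm p$'' heuristic reaches the right number. Without the double-cover structure, however, nothing rules out counting twelve.
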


Theorem \ref{thm-1.1} (1) is proved in Theorem \ref{lem-gen-u}, Theorem \ref{thm-1.1} (2) is proved in Theorem \ref{cor-special}  and Theorem \ref{thm-1.1} (3) is proved in  Theorem \ref{cor-nontrivial}. 
 Notice that  the right hand side of (\ref{mfe}) is symmetric with respect to $\frac{\omega_j}{4}, \, 0\leq j\leq 3$. By Theorem \ref{thm-1.1} (3), we might ask whether each one-parameter family of non-special solutions contains a unique solution, which is symmetric to $\omega_j/4$.  Theorem \ref{thm-sys} completely answer this question. The existence of symmetric solutions is useful when we discuss the bubbling phenomenon of (\ref{mfe-0}) during deforming $\tau\in\mathbb{H}$.

In \cite{LW2010}, they proved that (\ref{mfe-0}) has no solutions if    $\tau=bi$ with $b>0$, i.e., 
$E_\tau$ is rectangular. Together with Theorem \ref{thm-1.1} (2), the nonexistence result yields the following result.

\begin{corollary}
If $\tau=bi$ with $b>0$, then (\ref{mfe}) only has special solutions.
\end{corollary}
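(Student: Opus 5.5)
The corollary follows at once from Theorem \ref{thm-1.1}, together with the nonexistence result of \cite{LW2010} for rectangular tori. The plan has three steps.

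First, fix $\tau=bi$ with $b>0$ and let $u$ be any solution of (\ref{mfe}) on $E_\tau$. By Theorem \ref{thm-1.1} (1), $u$ is either special or nonspecial, so it suffices to rule out the nonspecial case.

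Second, suppose for contradiction that $u$ is nonspecial. Then (\ref{mfe}) has a nonspecial solution for this $\tau$. The ``only if'' direction of Theorem \ref{thm-1.1} (3) then says that the curvature equation (\ref{mfe-0}), $\Delta u+e^u=8\pi\delta_0$, has a solution on $E_{bi}$.

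Third, this contradicts the result of \cite{LW2010}, which states that (\ref{mfe-0}) has no solution when $E_\tau$ is rectangular. Hence every solution of (\ref{mfe}) with $\tau=bi$ is special. By Theorem \ref{thm-1.1} (2), such solutions exist for every $\tau$ and form a set isomorphic to $SL(2,\mathbb{C})/SU(2)$, so the statement is not vacuous.

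There is no real obstacle here: all the substance is carried by the dichotomy in part (1) and the equivalence in part (3) of Theorem \ref{thm-1.1}, whose proofs appear later in the paper. The only point to check is that the nonexistence theorem of \cite{LW2010} applies to exactly the equation (\ref{mfe-0}) with the same normalization, namely a single singularity at $0$ with conic angle $6\pi$ and $\tau=bi$. This matches the equation (\ref{mfe-8}) with $m=1$ studied there.
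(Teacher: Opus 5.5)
Your argument is correct and is essentially the paper's: the corollary follows from the nonexistence of solutions of (\ref{mfe-0}) on rectangular tori proved in \cite{LW2010}, combined with the special/nonspecial dichotomy and the existence criterion for nonspecial solutions in Theorem~\ref{thm-1.1}. The paper's text says ``Together with Theorem~\ref{thm-1.1} (2)'', but the input actually needed is part (3), together with part (1), exactly as you use it.
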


We solve Theorem \ref{thm-1.1} by noticing that  the curvature equation (\ref{mfe}) is closely related to critical points of some multiple Green function. This idea is inspired by  \cite{CLW2015,CFL2025-1}.
Let $G(z)$ be the Green function on $E_\tau$ with signularity at $0$.   In \cite{CFL2025-1},  in order to consider the following curvature equation 
\begin{equation}\label{mfe-p}
\Delta u+e^u=4\pi(\delta_{p}+\delta_{-p})\qquad \text{on}\quad E_\tau,
\end{equation} 
where $p\in E_\tau\setminus E_\tau[2]$ and  $E_\tau[2]:=\{\omega_j/2\in E_\tau \mid j=0,1,2,3\}$,
a generalized Green function $G_p(z)$ 
with singularities at $\pm p$ is constructed by 
$$G_p(z)=\frac{1}{2}(G(z+p)+G(z-p)),\qquad z\in E_\tau.$$
For any $\tau \in \mathbb{H}$, $G_p(z)$ always has its critical point at each half period $\omega_j/2$, which are called trivial critical points, and all other critical points are called nontrivial. It is clear that the nontivial critical points appear in pair, i.e., $q$ is a nontrivial critical point of $G_p$ if and only if $-q$ is a nontrivial critical point of $G_p$. 

\begin{Theorem}\cite{CFL2025-1}\label{thm-CFL2025-1}
Let $\tau \in \mathbb{H}$ and $k:=k(p;\tau)$ be the number of pairs of nontrivial critical points of $G_p(z;\tau)$, then $k\in \{0,1,2,3\}$ and all numbers in $\{0,1,2,3\}$ can be taken when $\tau,\,p$ vary. 
Furthermore, all solutions of (\ref{mfe-p}) are nonspecial and it contains exactly $k$ one-parameter family of nonspecial solutions. 
\end{Theorem}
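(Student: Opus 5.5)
We would follow the developing-map approach outlined in the introduction. Let $u$ solve (\ref{mfe-p}) and let $h$ be a developing map normalized as in (\ref{hz}). Both singularities $\pm p$ have cone angle $4\pi$, so $h$ is meromorphic on $\mathbb{C}$ and locally unbranched except at the lattice translates of $\pm p$, where $h'$ has a simple zero (or $h$ has a double pole).

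The first step is to rule out special solutions. If $u$ were special, we could arrange $\lambda_1=\lambda_2=1$, so $h$ would be elliptic of some degree $d\ge 1$. Riemann--Hurwitz on $E_\tau\to\mathbb{P}^1$ gives total ramification $2d$, but the only ramification points are $\pm p$, each simple, so $2d=2$ and $d=1$. This is impossible for a nonconstant elliptic function. Hence every solution has nontrivial unitary monodromy, and it generates exactly the family $u_t$ of (\ref{ut}). After conjugating by $SU(2)$ we may take the monodromy diagonal, so every solution is nonspecial.

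Next we parametrize the nonspecial solutions. Since $h(z+\omega_j)=\lambda_j h(z)$, the logarithmic derivative $f:=h'/h$ is elliptic. Its poles are simple, with residue $+1$ at zeros of $h$ and $-1$ at poles of $h$; its zeros are exactly at $\pm p$, both simple. So $f$ has degree $2$, and $h$ has one simple zero $q$ and one simple pole $r$ modulo $\Lambda_\tau$. Abel's theorem applied to the divisor of $f$, namely $q+r-p-(-p)$, gives $q+r=0$. Therefore
\begin{equation*}
f(z)=\zeta(z-q)-\zeta(z+q)+c,
\end{equation*}
and the condition $f(\pm p)=0$ determines $c$ and imposes one complex equation relating $q$ and $p$. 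Integrating, $h(z)=\exp\bigl(\int^z f\bigr)$ is single valued because the residues are $\pm1$.

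The multipliers are
\begin{equation*}
\lambda_j=\exp\bigl(-2q\,\eta_j+c\,\omega_j\bigr),
\end{equation*}
where $\eta_j$ are the quasi-periods of $\zeta$, and the requirement $|\lambda_j|=1$ gives two real equations. Using $\nabla G(z)=-\frac{1}{2\pi}\bigl(\zeta(z)-\eta_1 z\bigr)-\ldots$, i.e.\ the standard formula $\partial_z G$ expressed through $\zeta(z)-zr_1-\bar z r_2$ with Legendre's relation, we expect these real equations together with $f(\pm p)=0$ to combine into exactly $\nabla G_p(q)=0$, with $q\notin E_\tau[2]$. The excluded cases are $q=-q$, which would force $f\equiv c$, and the possibilities $q=\pm p$. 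Conversely, each such pair $\pm q$ produces a developing map, hence a one-parameter family $u_t$. Replacing $q$ by $-q$ corresponds to $h\mapsto 1/h$, which gives the same family, and distinct pairs give distinct families because the zero/pole divisor of $h$ is an invariant of the family. This yields the bijection between families of solutions and pairs of nontrivial critical points, so the number of families is $k$.

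The remaining and hardest step is proving that $k\in\{0,1,2,3\}$ and that every value is attained. For the upper bound we would use the Morse/degree count on $E_\tau\setminus\{\pm p\}$. The Euler characteristic is $-2$, and near each of the two singularities $\nabla G_p$ has index $+1$, since $G_p\to+\infty$ there. Hence the indices of all critical points sum to $-2$. The four trivial critical points contribute an amount computable from the Hessian of $G_p$ at $\omega_j/2$. For a bound on the number of nontrivial ones we would instead eliminate $c$ to reduce $\nabla G_p(q)=0$ to an algebraic equation in $\wp(q)$ of bounded degree. We expect at most $6$ roots, giving at most $3$ pairs $\pm q$, but this degree computation is where we anticipate the main difficulty. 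Attainment of each value would then come from degeneration: as $p\to 0$ we have $G_p\to G$, which has $1$ or $2$ pairs of nontrivial critical points depending on $\tau$. Separately, near $p\to\omega_j/2$ some trivial critical point bifurcates, adding a pair. Explicit choices such as rectangular $\tau$ with $p$ on a symmetry line should realize $k=0$ and $k=3$.
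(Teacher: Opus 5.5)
The paper does not prove this statement; it is quoted from \cite{CFL2025-1}, so your proposal has to be judged on its own merits.

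Your first two steps are sound, up to two repairs. The Riemann--Hurwitz argument correctly rules out special solutions. First repair: before saying the zeros of $f=h'/h$ are exactly $\pm p$, you must exclude $h(\pm p)\in\{0,\infty\}$. A double zero or pole of $h$ at $p$ or $-p$ would leave $f$ with at most one zero but at least two poles, which is impossible. Second repair: $\zeta(z-q)-\zeta(z+q)$ is even in $z$, so $f(p)=0$ and $f(-p)=0$ are the same condition. It only fixes $c=\zeta(q+p)+\zeta(q-p)$ and imposes no relation between $q$ and $p$; with such a relation you would have four real equations for two real unknowns. Legendre's relation then gives $\mathrm{Re}(c\,\omega_j-2q\eta_j)=0$ for $j=1,2$ iff $c=2\eta(q)$, i.e.\ iff $\zeta(q+p)+\zeta(q-p)=2\eta(q)$, which is exactly $\partial_z G_p(q)=0$. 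So your ``we expect'' is a two-line check, $q\notin E_\tau[2]$ is exactly nonspeciality, and the bijection between families and pairs $\pm q$ of nontrivial critical points holds.

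The genuine gap is the claim that $k\in\{0,1,2,3\}$ with every value attained; nothing in the proposal proves it. Your planned reduction of $\partial_zG_p(q)=0$ to an algebraic equation in $\wp(q)$ cannot exist. The equation contains $\eta(q)=-aq-b\bar q$ (with $a,b$ as in Section~\ref{sec-deg}), which is only $\mathbb{R}$-linear. After eliminating $c$ you are left with a real-analytic equation in $(q,\bar q)$, not a holomorphic one. Indeed, for a holomorphic family of algebraic equations the number of nontrivial roots could not jump across real hypersurfaces in the parameters, as already happens along $\partial\mathcal{E}$ for $G$. Poincar\'e--Hopf gives only $m-s=-2$ for $m$ minima and $s$ saddles ($G_p$ is subharmonic off $\pm p$). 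That relation is unchanged by adding a saddle--minimum pair, so by itself it bounds nothing.

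What is missing is an independent bound $m\le 4$. One way to get it is the anti-holomorphic dynamics argument Bergweiler--Eremenko used for $G$:
\begin{itemize}
\item $\partial_zG_p(q)=0$ iff $q$ is a fixed point of the $\Lambda_\tau$-equivariant antimeromorphic map $F(q)=\overline{-b^{-1}\bigl(\tfrac12(\zeta(q+p)+\zeta(q-p))+aq\bigr)}$;
\item nondegenerate minima of $G_p$ are exactly the attracting fixed points of $F$;
\item each attracting fixed point must attract one of the four critical points of $F$, namely the zeros of $\wp(q+p)+\wp(q-p)-2a$.
\end{itemize}
Then $m+s=4+2k$ together with $m-s=-2$ gives $k=m-1\le 3$, at least in the nondegenerate case.

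Your attainment sketch is also incorrect. $G$ has either no nontrivial critical points or exactly one pair $\pm\theta$, not ``1 or 2 pairs'', so $p\to 0$ yields only $k\in\{0,1\}$. The limit $p\to\omega_j/2$ yields nothing new: periodicity gives $G_p(z)=G_{p-\omega_j/2}(z-\omega_j/2)$, i.e.\ $k(p;\tau)=k(p-\omega_j/2;\tau)$. Thus $k=2$ and $k=3$ are not produced by any argument you give, and the rectangular-torus choice is a guess.
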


Theorem \ref{thm-CFL2025-1} is not the first one in the literature to find the connection between curvature equations and Green functions. In \cite{LW2010}, there proves the Green function $G(z)$ has either 3 critical points (all are trivial critical points) or 5 critical points (two extra nontrivial critical points), and moreover, the curvature equation (\ref{mfe-0}) has a unqiue one-parameter family of nonspecial solutions if and only if $G(z)$ has a pair of nontrivial critical points.  
Thus, Theorem \ref{thm-CFL2025-1} is a natural generalization of the result to (\ref{mfe-p}). 

In order to classify solutions of (\ref{mfe}),  we construct the following  associated multiple Green function: 
\begin{equation}\label{def-G2}
G_2(z_1, z_2):=G(z_1-z_2)-\frac{1}{2}\sum_{j=0}^3\left(G(z_1-\frac{\omega_j}{2})+G(z_2-\frac{\omega_j}{2})\right),  
\end{equation}
where $z_1, z_2\in E_\tau\setminus E_\tau[2]$ and $z_1\neq z_2$. It is clear that  $G_2(z_1, z_2)=G_2(z_2, z_1)$ because $G$ is an even function.  Hence, $G_2$ can be viewed as a function of  the set $\{z_1, z_2\}\in \mathrm{Sym}^2(E_\tau\setminus E_\tau[2])\setminus \Delta_2$, where $\Delta_2=\{\{z,z\}\,\mid\,z\in E_\tau\}$. 

 Similar to $G(z)$ and $G_p(z)$, 
$G_2(z)$ always has the following 12 critical points for any $\tau\in \mathbb{H}$: 
$$[(\frac{\omega_t}{4}, \frac{\omega_t}{4}-\frac{\omega_\ell}{2})], [(-\frac{\omega_t}{4}, -\frac{\omega_t}{4}-\frac{\omega_\ell}{2})], \, t\in \{j, k\}\, \text{and}\, \{j, k, \ell \}=\{1,2,3\},$$
which are called \emph{trivial critical points}, and 
the other critical points are called \emph{nontrivial critical points}. However, the structure of nontrivial critical points of $G_2(z)$ is more complicated than  $G(z)$ and $G_p(z)$.  For the sake of convenience, we classify the nontrivial critical points of $G_2(z)$ into two classes. By noticing that $\{z, -z\}$ with   $z\in E_\tau\setminus E_\tau[2]$ is always a nontrivial critical point of $G_2$,  this type of nontrivial critical points $\{z,-z\}$ are called  \emph{special critical points} and the other nontrivial  critical points are called \emph{nonspecial critical points}.

In Section \ref{sec-G2}, we will construct an one-parameter of solutions from a nonspecial critical point of $G_2$. By this specific construction, two equivalent non-special critical points would construct a same one-parameter family of solutions.
With this construction,
we make a correspondence between the nontrivial critical points of $G_2$ and the solutions of (\ref{mfe}). In this correspondence, special critical points correspond to special solutions and nonspecial critical points correspond to nonspecial solutions. Moreover, the correspondence is one to one between the equivalence class of nonspecial critical points and the set of one-parameter family of nonspecial solutions. We therefore finish the proof of Theorem \ref{thm-1.1}.


Finally, we consider the degeneracy criterion of critical points for $G_2$. Recently, it was proved in \cite{CFL2025-1} that the pair nontrivial critical points $\pm \theta$ of $G$ are nondegenerate with $\det D^2G(\pm \theta)>0$. 
Denote by
\begin{equation}\label{def-E}
\mathcal{E}:=\left\{\tau\in \mathbb{H}\mid G(z;\tau) \,\,\text{has nontrivial critical points.}\right\},
\end{equation}
which is a union of countable many open triangular domains \cite{CLW2018,Lin}.  Our second main result is the following.
\begin{theorem}\label{thm-intro-deg}
Let $\tau\in \mathbb{H}$,  then all special critical points  of $G_2$ are degenerate and  the followings hold: 
\begin{enumerate}
\item 
If $\tau\in \mathcal{E}$, then 
\begin{itemize}
\item[(1a)]  all nonspecial critical points $\{z_1, z_2\}$ of $G_2$ are nondegenerate and 
 $\det D^2G_2(z_1,z_2)<0$; 
\item[(1b)] all trivial critical points $\{z_1, z_2\}$ of $G_2$ are nondegenerate and\\
$\det D^2G_2(z_1,z_2)>0$.
\end{itemize}
\item If $\tau\in \mathbb{H}\setminus\mathcal{E}$, then there are no nonspecial critical points.  Futhermore, 
\begin{enumerate}
\item [(2a)] if $\tau\in \mathbb{H}\setminus \overline{\mathcal{E}}$, then all trivial critical points of $G_2$  are nondegenerate.
\item [(2b)] if $\tau\in \partial{\mathcal{E}}$, then exactly $4$ trivial critical points  of $G_2$ are degenerate and $\det D^2G_2(z_1,z_2)>0$ for the remaining $8$ trivial critical points $\{z_1,z_2\}$. 
\end{enumerate}
\end{enumerate}
\end{theorem}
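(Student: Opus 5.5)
The plan is to reduce everything to explicit Weierstrass-function computations via the complex derivative of $G$. Write $z=r+s\tau$ with $r,s\in\mathbb{R}$. The classical formula is
$$-4\pi\,\partial_z G(z)=\zeta(z)-r\eta_1-s\eta_2 .$$
Two facts simplify the sum over half periods. First, $\sum_{j=0}^3\wp(z-\frac{\omega_j}{2})=4\wp(2z)$. Second, the linear terms combine. Together they give, for $w_i:=r_i+s_i\tau$,
$$-4\pi\,\partial_{z_1}G_2=\zeta(z_1-z_2)-\zeta(2z_1)+(r_1+r_2)\eta_1+(s_1+s_2)\eta_2 ,$$
up to a constant that I would check vanishes. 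The analogous formula holds for $\partial_{z_2}G_2$ with $z_1,z_2$ swapped. The critical point system is then:
\begin{itemize}
\item[(i)] $\zeta(z_1-z_2)-\zeta(2z_1)=\zeta(z_2-z_1)-\zeta(2z_2)$;
\item[(ii)] the common value equals $-\big((r_1+r_2)\eta_1+(s_1+s_2)\eta_2\big)$, a linear function of $z_1+z_2$.
\end{itemize}
In particular $\{z,-z\}$ is always a solution, and $z$ runs over a real two-dimensional family.

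For the real Hessian I would use complex coordinates. The Hessian has the block form $\begin{pmatrix}A&B\\ \bar B&\bar A\end{pmatrix}$. Here $A=(\partial_{z_i}\partial_{z_j}G_2)$ is the holomorphic $2\times2$ matrix, built from $\wp(z_1-z_2)$ and $\wp(2z_i)$. The matrix $B=(\partial_{z_i}\partial_{\bar z_j}G_2)$ is constant on $\mathrm{Sym}^2(E_\tau)$, coming only from the linear terms; by the Legendre relation it is a multiple of $\frac{1}{\mathrm{Im}\,\tau}\begin{pmatrix}1&1\\1&1\end{pmatrix}$. Since $B$ has rank one, $\det D^2G_2$ reduces to a closed expression: $|\det A|^2$ plus a correction term involving $\mathrm{Re}\big(\overline{\cdot}\,\cdot\big)$ of the entries of $A$ and $1/\mathrm{Im}\,\tau$. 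This single formula is then evaluated at the three types of critical points.

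\emph{Special critical points.} They form the real two-dimensional family $\{z,-z\}$ of critical points, so the tangent vector to this family lies in the kernel of $D^2G_2$. Hence they are all degenerate, with no computation needed.

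\emph{Nonspecial critical points.} I would change variables to $w=z_1+z_2$ and $v=z_1-z_2$. Subtracting and adding the two equations should show the following: a nonspecial critical point forces $w$, after suitable translation by half periods, to satisfy $\zeta(w)-r\eta_1-s\eta_2=0$. Here $\zeta$ is taken with the appropriate period shift, so that $w$ is $\pm\theta$ plus a half period, where $\pm\theta$ is the nontrivial critical point pair of $G$. The variable $v$ is then determined algebraically through the addition formula for $\zeta$. This gives the statement that there are no nonspecial critical points when $\tau\notin\mathcal{E}$. For $\tau\in\mathcal{E}$, I would substitute into the determinant formula. I expect $\det D^2G_2$ to factor as a negative multiple of $|\text{nonzero quantity in }v|^2$ times $\det D^2G(\theta)$. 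Since $\det D^2G(\pm\theta)>0$ by \cite{CFL2025-1}, this gives $\det D^2G_2<0$.

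\emph{Trivial critical points.} Take $\{z_1,z_2\}=\{\frac{\omega_t}{4},\frac{\omega_t}{4}-\frac{\omega_\ell}{2}\}$. Then $2z_i$ and $z_1-z_2$ are half periods, so the entries of $A$ are the values $e_k=\wp(\omega_k/2)$. I expect $\det D^2G_2$ to factor as a positive quantity times $\det D^2G(\frac{\omega_k}{2})$ for a specific half period $k=k(t,\ell)$. For that factor I would use the known result from \cite{LW2010,CLW2018}: $\det D^2G(\omega_k/2)$ is positive or negative according to whether $\tau$ lies in $\mathcal{E}$, and it vanishes exactly on the part of $\partial\mathcal{E}$ where the nontrivial pair collides with $\omega_k/2$. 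This gives (1b) and (2a). For (2b), on $\partial\mathcal{E}$ exactly one half period $\omega_k/2$ of $G$ is degenerate. I would count that exactly $4$ of the $12$ trivial points map to that $k$, and that the remaining $8$ correspond to half periods of $G$ with positive sign.

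The main obstacle is the algebra in the nonspecial case and in the trivial case. One must get the determinant to factor exactly through $\det D^2G$ at the corresponding point of $E_\tau$. I would first try doing this in the $(w,v)$ coordinates, where the block structure of $B$ diagonalizes.
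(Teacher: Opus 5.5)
Your overall route is the paper's. The critical set splits into $Y_0$, the real two-dimensional family $\{z,-z\}$, which is degenerate because its tangent directions lie in the kernel of the Hessian. The rest lies in the sets $Y_\ell$, on which $2z_1$ is a critical point of $G$; there one factors $\det D^2G_2$ through $\det D^2G(2z_1)$ and invokes the known classification for $G$.

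Your complex-coordinate computation does go through, and quickly. The mixed block is $\frac{b}{4\pi}J$ with $J=\begin{pmatrix}1&1\\1&1\end{pmatrix}$ and $b=-\pi/\mathrm{Im}\,\tau$, so $\det D^2G_2=16\bigl(|\det A|^2-\frac{b^2}{16\pi^2}|A_{11}+A_{22}-2A_{12}|^2\bigr)$. At any point with $z_1-z_2=\frac{\omega_\ell}{2}$ (where $\wp(2z_2)=\wp(2z_1)$) this equals $-\frac{4}{\pi^2}|\wp(2z_1)-e_\ell|^2\det D^2G(2z_1)$, which is the paper's identity.

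Your classification step is also fine once made precise. By the addition formula, the difference of the two equations is $\wp'(v)/(\wp(w)-\wp(v))=0$, so $w=0$ or $v$ is a half period. The sum then says $w+v=2z_1$ is a critical point of $G$.

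The error is in the trivial case. The trivial points $\{\pm\frac{\omega_t}{4},\pm\frac{\omega_t}{4}-\frac{\omega_\ell}{2}\}$ are in exactly the same situation as the nonspecial ones: $v=\frac{\omega_\ell}{2}$, and $2z_1\equiv\frac{\omega_t}{2}$ is a critical point of $G$. So the same identity applies, with $k(t,\ell)=t$ and the \emph{negative} prefactor $-\frac{4}{\pi^2}|e_t-e_\ell|^2$, not a positive one.

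Moreover, the facts you quote about $G$ are false:
\begin{itemize}
\item for $\tau\in\mathcal{E}$, all three half periods are nondegenerate saddle points, so $\det D^2G(\frac{\omega_t}{2})<0$;
\item for $\tau\notin\overline{\mathcal{E}}$, the sign is not uniform in $t$: there is one local minimum and two saddles, as the index count on the torus forces (the singularity at $0$ acts as a maximum);
\item on $\partial\mathcal{E}$, the two nondegenerate half periods are saddles, so the remaining $8$ trivial points correspond to half periods with \emph{negative} $\det D^2G$.
\end{itemize}

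Your two sign errors cancel, which is why your conclusions for (1b) and (2b) come out right. As written, though, the argument rests on a false input. Carrying out the determinant computation correctly while keeping your stated criterion for $G$ would give $\det D^2G_2<0$ in (1b).

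The repair is to use one uniform factorization on all of $Y_\ell\setminus Y_0$, trivial and nonspecial points alike, together with the correct saddle-point statement for the half periods. Your count of exactly $4$ degenerate trivial points on $\partial\mathcal{E}$ is then correct: these are the points with $t$ equal to the degenerate index, with two choices of $\ell\neq t$ and two signs.
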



This information of nondegeneracy is important. 
There is an application of Theorem \ref{thm-intro-deg}.
\begin{Theorem}\label{thm-app}
For any $\tau \in \mathcal{E}$, there is a small number $\varepsilon(\tau)>0$ such that the curvature equation 
$$\Delta u+e^u=4\pi \sum_{k=0}^3 \delta_{p_k}\qquad \text{on}\,\, E_\tau$$
has at least six one-parameter families of solutions, where $p_0=0$ and $|p_j-\frac{\omega_j}{2}|<\varepsilon(\tau), 1\leq j\leq 3$. 
\end{Theorem}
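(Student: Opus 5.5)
This is a perturbation argument. By Theorem \ref{thm-intro-deg} (1a), for $\tau\in\mathcal{E}$ the nonspecial critical points of $G_2$ are nondegenerate. We perturb the singular points $\omega_j/2$ to nearby points $p_j$ and continue these critical points via the implicit function theorem. We then use the correspondence of Section \ref{sec-G2}, between nonspecial critical points and one-parameter families of solutions, to produce solutions of the perturbed equation.

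First, introduce the perturbed multiple Green function
$$G_2^{\mathbf p}(z_1,z_2):=G(z_1-z_2)-\frac12\sum_{k=0}^3\bigl(G(z_1-p_k)+G(z_2-p_k)\bigr),\qquad \mathbf p=(p_0,p_1,p_2,p_3),\ p_0=0.$$
It depends real-analytically on $\mathbf p$ away from the singular set. Second, redo the construction of Section \ref{sec-G2} with $\omega_k/2$ replaced by $p_k$. Given a critical point $\{z_1,z_2\}$ of $G_2^{\mathbf p}$ that avoids the special locus, build a developing map $h$ whose Schwarzian has simple-type apparent singularities at $p_k$ and poles located via $z_1,z_2$. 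The critical point equations $\nabla_{z_1}G_2^{\mathbf p}=\nabla_{z_2}G_2^{\mathbf p}=0$ should again be exactly the conditions that make the monodromy of $h$ unitary and diagonal, i.e.\ (\ref{hz}) holds with $|\lambda_j|=1$. Then $e^th$ gives a one-parameter family via the Liouville formula (\ref{uz}). The argument there uses only the local data at each singular point (angle $4\pi$, so the local exponents are $0,2$) and the quasi-periodicity of $\zeta$ and $\wp$. The half-period symmetry was used only to single out the trivial and special critical points, not in the correspondence itself. So I expect it to transfer verbatim; I would check this step carefully.

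Third, fix $\tau\in\mathcal E$. By Theorem \ref{thm-1.1}(3) and its proof, $G_2$ has nonspecial critical points in six inequivalent classes, each corresponding to one of the six families. Pick one representative $\{z_1^{(i)},z_2^{(i)}\}$, $i=1,\dots,6$. By (1a), $\det D^2G_2\neq0$ there. The implicit function theorem applied to $\nabla G_2^{\mathbf p}$ yields $\varepsilon(\tau)>0$ and real-analytic continuations $\{z_1^{(i)}(\mathbf p),z_2^{(i)}(\mathbf p)\}$ of critical points of $G_2^{\mathbf p}$ for $|p_j-\omega_j/2|<\varepsilon$. Taking $\varepsilon$ small keeps them away from the $p_k$, from the diagonal, and from each other, since there are only finitely many of them.

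The main obstacle is ensuring that the six continued critical points give six \emph{distinct} one-parameter families. The perturbed points might become equivalent under the equivalence relation of Section \ref{sec-G2}, or might land on a special configuration, which would produce a three-parameter family or degenerate. For distinctness, I would use that the equivalence relation is generated by finitely many explicit maps depending continuously on $\mathbf p$. Two continued points can be equivalent only if their limits are, which fails at $\mathbf p=(\omega_k/2)$, so by continuity they stay inequivalent for small $\varepsilon$. Alternatively, the monodromy data $(\lambda_1,\lambda_2)$, which depend continuously on $\mathbf p$, already distinguish the six families. Special solutions require trivial monodromy, which is a closed condition violated at $\mathbf p=(\omega_k/2)$, so it is violated nearby as well. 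Hence the perturbed equation has at least six one-parameter families of solutions, which proves the theorem.
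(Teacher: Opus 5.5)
The paper does not prove this statement. It is announced as an application of Theorem \ref{thm-intro-deg} and deferred to a forthcoming paper, so continuing the nondegenerate nonspecial critical points by the implicit function theorem, as you propose, is the evident intended route.

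\textbf{The gap in your second step.} This step carries all the content, and you only assert it, on a wrong premise. In Sections \ref{sec-GLE}--\ref{sec-degeneracy} the passage from critical points to solutions is not built from local data and quasi-periodicity alone. It uses the half-period identity (\ref{ccccc16}) to confine critical points to $Y=\cup_jY_j$, together with the decomposition $AP=\cup_jV_j$, the double covers $T^{(\ell)}$ of Theorem \ref{mapT}, and the monodromy formula (\ref{eqn-intro-sr}) of Theorem \ref{thm-intro-data}. All of these come from \cite{FL2025} for the configuration $E_\tau[2]$, and none survives moving the $p_j$. You need an argument valid for every $\mathbf{p}$. For instance, for $\{z_1,z_2\}$ off the diagonal and away from the $p_k$, put
$$y(z)=e^{cz}\frac{\sigma(z-z_1)\sigma(z-z_2)}{\prod_{k=0}^3\sigma(z-p_k)^{1/2}},\qquad w:=z_1+z_2-\frac12\sum_{k=0}^3p_k.$$
\begin{itemize}
\item Then $y(z+\omega_j)=\pm e^{c\omega_j-\eta_jw}y(z)$, and the Legendre relation shows both multipliers are unimodular iff $c=\eta(w)$.
\item $y''/y$ is regular at $z_1$ iff $c+\zeta(z_1-z_2)-\frac12\sum_k\zeta(z_1-p_k)=0$, and likewise at $z_2$.
\item By $\mathbb{R}$-linearity of $\eta$, these two equations with $c=\eta(w)$ are exactly $\partial_{z_1}G_2^{\mathbf{p}}=\partial_{z_2}G_2^{\mathbf{p}}=0$.
\end{itemize}
So $y''/y$ defines an apparent Fuchsian equation with singularities at the $p_k$, having $y$ as an eigenfunction with unitary character. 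Its exponents there are $-\frac12,\frac32$; the values $0,2$ you quote belong to $h$. You still need a second eigenfunction. The monodromy is abelian, since the commutator equals the product of the four local monodromies $-I$. Hence when $w\notin\frac12\Lambda_\tau$ the characters are not both $\pm1$, $M_1,M_2$ diagonalize simultaneously, and $h=y/\tilde y$ satisfies (\ref{hz}).

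\textbf{The distinctness step.} Your argument here also fails as stated, for two reasons.
\begin{itemize}
\item Away from $\mathbf{p}=(\frac{\omega_k}{2})_k$, the equivalence is not generated by explicit maps. The map $\{z_1,z_2\}\mapsto\{\frac{\omega_j}{2}-z_1,\frac{\omega_j}{2}-z_2\}$ comes from the symmetry of $q_\ell$ in Lemma \ref{lem-even}, which the perturbation destroys.
\item The monodromy data do not separate the six families. By (\ref{eqn-intro-sr}), both type-$\ell$ classes have $r+s\tau\equiv\pm\bigl(\theta-\frac{\omega_\ell+\omega_3}{2}\bigr)$ modulo $\Lambda_\tau$; they carry $T$ and $-T$ and are exchanged by $u(z)\mapsto u(-z)$. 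Classes with different $\ell$ differ by half periods, which leave the multipliers of $h$ unchanged.
\end{itemize}
A correct replacement is a count. Continue all twelve nonspecial critical points. Each keeps $w\notin\frac12\Lambda_\tau$ by continuity, since $\theta\notin E_\tau[2]$, and hence yields a nonspecial family. Any such family arises from at most two critical points of $G_2^{\mathbf{p}}$, namely the zero divisors of the two eigenfunctions of its equation. So there are at least $12/2=6$ families. Alternatively, equivalence of $P,Q$ is the closed condition $y_P''/y_P=y_Q''/y_Q$, and that closedness is what actually justifies passing to the limit $\mathbf{p}\to(\frac{\omega_k}{2})_k$.
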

This theorem and some other applications will be discussed in a coming paper.

The paper is organized as follows. We first recall the important tool  -- developing maps Section \ref{sec-developing} and some results about the generalized Lam\'e equations in Section \ref{sec-GLE}.     Then we discuss the critical points of the multiple Green function $G_2$  in Section \ref{sec-G2} and finith the proof of Theorem \ref{thm-1.1} in Section \ref{sec-degeneracy}. Finally, we prove Theorem \ref{thm-intro-deg} in Section \ref{sec-deg}.

\section{Developing maps and associated ODEs}\label{sec-developing}

In order to study the spherical metrics with conic signularities,
we first introduce a very important tool -- the developing map.

Let $u$ be a solution of (\ref{eqn-mfe}). Notice that 
a surface $S\setminus \Sigma$ of curvature $1$ is locally isometric to a region on the standard sphere which means the Riemann sphere ${\mathbb{P}}^1:=\mathbb{C}\cup \{\infty\}$ with Fubini-Study metric $2|dw|/(1+|w|^2)$.  This local isometry can be analytically continued along any path not passing through the singularities so we have a multivalued locally biholomorphic map 
$$f: S\setminus \Sigma \to \mathbb{P}^1.$$
That is, $f$ can be lifted to a holomorphic map $\widetilde{f}$ defined on a (not necessarily connected) covering space $\pi: \widetilde{S\setminus \Sigma}\to S\setminus \Sigma$ and the following diagram commutes
$$
\xymatrix{\widetilde{S\setminus \Sigma}\ar[d]_\pi\ar[r]^{\widetilde{f}}&\mathbb{P}^1\\
S\setminus \Sigma\ar[ur]_{f}}.
$$
Since $f$ is a local isometry,  then the metric $\rho(z)|dz|$ on $S\setminus \Sigma$ is recovered from $f$ by the formula
\begin{equation}\label{rho-f}
\rho(z)=\frac{2|f'(z)|}{1+|f(z)|^2},
\end{equation}
thus 
\begin{equation}\label{u-f}
u(z)=\log\frac{8|f'(z)|^2}{(1+|f(z)|^2)^2}.
\end{equation}
Notice that  $\rho(z)\sim |z-p_j|^{\alpha_j-1}, \, (z\to p_j)$, then
\begin{equation}\label{faj}
f(z)\sim (z-p_j)^{\alpha_j},\quad (z\to p_j).
\end{equation}
A map $f$ satisfying (\ref{u-f}) is called a \emph{developing map} of $u$ (or $\rho$). 

However, the developing map is not unique. Indeed, since the group of automorphisms preserving the Fubini-Study metric on $\mathbb{P}^1$ is given by $SU(2)$,  
then
two developing maps $f$ and $g$ define the same spherical metric $\rho$ if and only if 
$g=Rf$ where $$R=\begin{pmatrix}a&b\\ c&d\end{pmatrix}\in SU(2) \quad \text{and} \quad Rf=\frac{af+b}{cf+d}.$$
Therefore, 
 classification of the solutions of (\ref{eqn-mfe})
 is equivalent to  classification of developing maps modulo the action of $SU(2)$. 

Consider the \emph{Schwarzian derivative} $S[f]$ of $f$, which is defined by
$$S[f]:=\frac{f'''}{f'}-\frac{3}{2}\left(\frac{f''}{f'}\right)^2.$$
From (\ref{u-f}), we have 
\begin{equation}
S[f]=u_{zz}-\frac{1}{2}u_z^2,
\end{equation}
which means that $S[f]$ does not depend on the choice of $f$.
Since $u$ satisfies (\ref{eqn-mfe}), then
$$S[f]_{\overline{z}}=u_{zz\overline{z}}-u_zu_{z\overline{z}}=0,$$
thus $S[f]$ is holomphic on $S\setminus \Sigma$. 
 The behavior (\ref{faj})  of $f$ near the conic singularities $p_j$ implies that $S[f]$ has double poles at these points and 
\begin{equation}\label{localSf}
S[f]\sim \frac{1-\alpha_j^2}{2}(z-p_j)^{-2}, \quad (z\to p_j).
\end{equation}
Hence, $S[f]$ is 
a rational function in the case of the sphere, and an elliptic function in the case of torus.

Consider the second order differential equation of Fuchsian type induced by $S[f](z;\tau)=u_{zz}-\frac{1}{2}u_z^2$:
\begin{equation}\label{lu}
\mathcal{L}(u): 
\quad y''(z)=-\frac{1}{2}S[f](z;\tau)y(z),\quad z\in \mathbb{C}.
\end{equation}
Let $y_1(z), y_2(z)$ be two linearly independent solutions of $\mathcal{L}(u)$, then $S\left[\frac{y_1}{y_2}\right]=S[f]$.
Notice that $S[g]=S[f]$ if and only if $g=Rf$ for some $R\in SL(2, \mathbb{C})$, then there exists 
$R\in SL(2,\mathbb{C})$ such that   $$f=R\left(\frac{y_1}{y_2}\right)=\frac{ay_1+by_2}{cy_1+dy_2},\quad \text{where} \,\,R=\left(\begin{array}{cc}a&b\\ c&d\end{array}\right).$$ 
Denote by $f_1=ay_1+by_2$ and $f_2=cy_1+dy_2$ which are another pair of linearly independent solutions of $\mathcal{L}(u)$, we have $f=f_1/f_2$. 
Therefore,  classification of developing maps modulo the action of $SU(2)$ is equivalent to  classification of linearly independent solutions of the corresponding Fuchsian equations modulo the action of  $SU(2)$.  Consequently, 
 classification of the solutions $u$ of (\ref{eqn-mfe})
 is equivalent to  classification of linearly independent solutions of $\mathcal{L}(u)$ modulo the action of  $SU(2)$.

In the rest of this paper, we focus on  the case of $g=1$, i.e., $S=E_\tau:=\mathbb{C}/\Lambda_\tau$ is a torus, where $\Lambda_\tau=\mathbb{Z}+\mathbb{Z}\tau$ and   $\tau\in \mathbb{H}:=\{z\in \mathbb{C} \mid \mathrm{Im}z>0\}$. 

Let $u$ be a solution of (\ref{mfe-torus}) and 
$f$ be a developing map of $u$, then $S[f]$ is an elliptic function and satisfies (\ref{localSf}), then we can set 
\begin{equation}\label{eqn-Sf}
-\frac{1}{2}S[f](z;\tau)=\sum_{j=1}^n\frac{\alpha_j^2-1}{4}\wp(z-p_j;\tau)+\sum_{j=1}^nT_j\zeta(z-p_j;\tau)+B,
\end{equation}
with $T_j,\, B\in \mathbb{C}$ and $\sum_{j=1}^nT_j=0$. 
Here, $\wp(\cdot; \tau), \zeta(\cdot;\tau)$ are the Weierstrass elliptic and zeta functions with basic periods 1 and $\tau$, respectively.  
%
%
Since $T_j$ and $B$ depend on the solution $u$, they are not completely free. Indeed,  $(T_1, \cdots, T_n, B)$ is called apparent parameters, which means all solutions of $\mathcal{L}(u)$ are free of logarithmic singularity at all singularity $p_1, \cdots, p_n$, and we call $\mathcal{L}(u)$ is apparent correspondingly. See the following lemma for explanation.

\begin{lemma}\label{lem-app}
Let $u$ be a solution of (\ref{mfe-torus}), then $\mathcal{L}(u)$ is apparent.
\end{lemma}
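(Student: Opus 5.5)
The plan is to exhibit, near each singularity $p_j$, two linearly independent solutions of $\mathcal{L}(u)$ that are free of logarithms, built directly from a developing map $f$ of $u$. The key identity is that for any locally univalent meromorphic $f$, the functions
\[
y_1 = \frac{f}{\sqrt{f'}}, \qquad y_2 = \frac{1}{\sqrt{f'}}
\]
satisfy $y'' = -\tfrac12 S[f]\, y$, and their ratio is $y_1/y_2 = f$. This is the classical statement that $(f')^{-1/2}$ and $f(f')^{-1/2}$ span the solution space of the equation with potential $-\tfrac12 S[f]$. It can be checked directly by differentiating $(f')^{-1/2}$ twice. It is consistent with the earlier remark that $S[y_1/y_2] = S[f]$.

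The local analysis at $p_j$ goes as follows.
\begin{itemize}
\item Near $p_j$, the developing map $f$ continues analytically along small loops around $p_j$. By (\ref{faj}), after composing with a suitable element of $SU(2)$ (the monodromy around $p_j$ acts by an isometry), we may write
\[
f(z) = (z-p_j)^{\alpha_j}\, g(z),
\]
with $g$ holomorphic and $g(p_j)\neq 0$.
\item Since $\alpha_j$ is an integer here ($\alpha_j - 1 \in \mathbb{N}^+$), $f$ is single-valued and meromorphic near $p_j$, with a zero of order $\alpha_j$. In fact the curvature condition gives $\rho \sim |z-p_j|^{\alpha_j-1}$, so $f$ has a branch point of order $\alpha_j$. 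The local monodromy is then $(z-p_j)^{\alpha_j} \mapsto e^{2\pi i\alpha_j}(z-p_j)^{\alpha_j}$, which is trivial.
\item Consequently $f' = (z-p_j)^{\alpha_j-1} h(z)$ with $h(p_j)\neq 0$.
\end{itemize}

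Substituting into the two solutions gives
\[
y_2 = (z-p_j)^{-(\alpha_j-1)/2} h^{-1/2}, \qquad y_1 = (z-p_j)^{(\alpha_j+1)/2} g\, h^{-1/2}.
\]
Both are of the form $(z-p_j)^{\text{exponent}}\times(\text{holomorphic})$. Their exponents $\tfrac{1\pm\alpha_j}{2}$ are exactly the local exponents of $\mathcal{L}(u)$ at $p_j$ read off from (\ref{localSf}), and these exponents differ by the integer $\alpha_j$. Neither solution contains a logarithm, and since they are linearly independent they form a basis. Hence no solution of $\mathcal{L}(u)$ has a logarithmic term at $p_j$. Doing this for every $j$ shows that $\mathcal{L}(u)$ is apparent.

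The only delicate step is the local normal form $f = (z-p_j)^{\alpha_j} g$. It has to be justified from the conic asymptotics of $u$ after an $SU(2)$ normalization, and for general real $\alpha_j$ it needs the standard fact that the local monodromy around a cone point is elliptic and can be diagonalized in $SU(2)$. I would cite the standard local classification of conic spherical singularities (or argue via $\rho\sim|z-p_j|^{\alpha_j-1}$ and the Liouville formula) rather than reprove it. In our setting $\alpha_j\in\mathbb{N}$, so the monodromy is trivial and everything is single-valued up to the harmless square-root sign.
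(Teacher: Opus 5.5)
Your argument is correct, but it takes a different route from the paper.

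\textbf{Your route.} You build an explicit logarithm-free basis near each $p_j$, namely $(f')^{-1/2}$ and $f(f')^{-1/2}$, whose Wronskian is $-1$. This comes from the local normal form $f=(z-p_j)^{\alpha_j}g$, and you read off the exponents $\frac{1\pm\alpha_j}{2}$ directly.

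\textbf{The paper's route.} The paper argues by contradiction. Take $f=y_1/y_2$ and suppose some solution has a logarithm at $p_k$. Then the local monodromy of $(y_1,y_2)$ around $p_k$ is non-semisimple, so $e^u=8|W|^2/(|y_1|^2+|y_2|^2)^2$, with $W$ constant, cannot be single-valued near $p_k$. This is the local version of the unitarity argument in Lemma \ref{lem-unit}.

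\textbf{What each needs and buys.} The paper's route uses only that $u$ is single-valued on a punctured neighbourhood of $p_k$; it needs no information about how $f$ behaves there.

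Your route needs (\ref{faj}) in a strong form: $g$ holomorphic with $g(p_j)\neq 0$. For integer $\alpha_j$ that already contains the triviality of the local monodromy of $f$. As written, your deduction of single-valuedness from the normal form is circular, since $g$ holomorphic presupposes it. So this input has to come from the local classification you propose to cite, or from (\ref{faj}) combined with the unitarity of the local monodromy. The paper itself takes (\ref{faj}) for granted, so this is acceptable.

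In exchange your proof is constructive. It also recovers (\ref{localSf}) and shows that the local monodromy of $\mathcal{L}(u)$ at $p_j$ is the scalar $(-1)^{\alpha_j+1}I$. That is exactly the fact ($-I$ at each $\omega_k/2$) the paper uses in Section \ref{sec-GLE}.
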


\begin{proof}
 Let $y_1(z), y_2(z)$ be two linearly independent solutions  of  $\mathcal{L}(u)$ such that the developing map $f(z)$ of $u$ satisfy 
$f(z)=y_1(z)/y_2(z)$. If $\mathcal{L}(u)$ is not apparent, then at least one of $y_1(z), y_2(z)$ has a logarithmic at some singulartiy $p_k$.  Notice that the Wronskian $W:=y_1y_2'-y_1'y_2$ of $y_1, y_2$ is a nonzero constant,
 then
$$e^u=\frac{8|f'(z)|^2}{(1+|f(z)|^2)^2}=\frac{8|W^2|}{|y_1(z)|^2+|y_2(z)|^2}$$
is not single-valued near $p_k$, which is a contradiction, because $u$ is single valued on $E_\tau\setminus \Sigma$.
\end{proof}

\section{Generalized Lam\'e equations with apparent parameters}\label{sec-GLE}
From now on, we focus on $\Sigma=E_\tau[2]$ and $\alpha=(2,2,2,2)$,
and consider the following  generalized Lam\'e equation 
\begin{equation}\label{ODE-0}
y''(z)=q(z;\mathbb{T}, B, \tau)y(z),\quad z\in \mathbb{C},
\end{equation}
where 
\begin{align*}
q(z; \mathbb{T},B,\tau)=\sum_{j=0}^3\frac{3}{4}\wp(z-\frac{\omega_j}{2};\tau)+\sum_{j=0}^3T_j\zeta(z-\frac{\omega_j}{2};\tau)+B,
\end{align*}
and $(\mathbb{T}, B):=(T_1,\cdots, T_4, B)$ is apparent. 
Recall some results about (\ref{ODE-0}) from \cite{FL2025}, we have 
(\ref{ODE-0}) is apparent at $\omega_k/2$ 
if and only if $(\mathbb{T},B)$ satisfies 
\begin{equation}\label{cond-AP}
T_k^2-\frac{3}{4}\sum_{j\neq k}\zeta_{kj}T_j-B=0,
\end{equation}
where  $\zeta_{kj}=\zeta(\frac{\omega_k}{2}-\frac{\omega_j}{2})$, and  the space of all apparent parameters is
$$AP=\left\{(\mathbb{T}, B)\,\mid \, \sum_{j=0}^3T_j=0 \,\,\text{and}\,\, (\ref{cond-AP}) \,\,\text{holds for}\,\, k=0,1,2,3\right\}.$$
%
In particular, $AP$ can be decomposed into three irreducible components. 
\begin{theorem}\cite[Lemma 2.1]{FL2025}
The apparent space  $AP=\cup_{j=1}^3 V_j$, where 
$$V_j=\left\{(\varepsilon_0^{(j)}T, \varepsilon_1^{(j)}T,\varepsilon_2^{(j)}T,\varepsilon_3^{(j)}T,B)\,\mid\, B=T^2-\eta_{3-j}T\,\,\text{and}\,\, T\in \mathbb{C}\right\}\cong \mathbb{C}$$
with $\eta_0=0$, $\eta_j:=\zeta(z+\omega_j)-\zeta(z), j=1,2,3$ are the quasi-periods of $\zeta(z)$ and $\varepsilon_0^{(j)}=\varepsilon_j^{(j)}=-\varepsilon_k^{(j)}=-\varepsilon_\ell^{(j)}=1$ for $\{j,k,\ell\}=\{1,2,3\}$.
\end{theorem}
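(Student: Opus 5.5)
Write $q_0(z):=\sum_{j=0}^3\frac34\wp(z-\frac{\omega_j}{2})$. My plan is a direct computation from the apparent conditions. First I would rewrite the four equations (\ref{cond-AP}) together with $\sum_j T_j=0$ as a system in the unknowns $(T_0,\dots,T_3,B)$. The natural first step is to subtract the equation for $k$ from the one for $k'$; this removes $B$:
\[
T_k^2-T_{k'}^2=\frac34\Bigl(\sum_{j\neq k}\zeta_{kj}T_j-\sum_{j\neq k'}\zeta_{k'j}T_j\Bigr).
\]
To simplify the $\zeta_{kj}$ I would use the half-period values. These are $\zeta(\frac{\omega_i}{2})=\frac{\eta_i}{2}$, together with oddness, and, for $\{i,j,l\}=\{1,2,3\}$,
\[
\zeta\Bigl(\frac{\omega_i}{2}-\frac{\omega_j}{2}\Bigr)=\zeta\Bigl(\frac{\omega_l}{2}\Bigr)+(\text{quasi-period shift}).
\]
The shift comes from $\omega_i-\omega_j\equiv\omega_l \pmod \Lambda$ up to sign. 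It can be expressed through $\eta_1,\eta_2,\eta_3=\eta_1+\eta_2$, using the Legendre relation only where it is needed.

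The second step is to substitute $T_0=-(T_1+T_2+T_3)$ and reorganize. I expect the right-hand sides to become linear forms whose coefficients are $\pm\frac38\eta$-combinations. The left-hand sides factor as $(T_k-T_{k'})(T_k+T_{k'})$. The sign pattern $\varepsilon^{(j)}$ in the statement pairs $0$ with $j$ and $k$ with $\ell$. This suggests passing to the coordinates
\[
s_j:=T_0+T_j, \qquad j=1,2,3.
\]
Because $\sum_j T_j=0$, the pair sums satisfy $T_k+T_\ell=-s_j$. In these coordinates I expect the differences of the apparent equations to collapse to products of the form $s_j s_k=0$ for $j\neq k$, or more likely $s_j\cdot(\text{linear form})=0$ with the linear form vanishing exactly when the other $s$'s vanish. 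It then follows that at least two of the $s_j$ vanish.

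If two of them vanish, say $s_k=s_\ell=0$, then $T_k=-T_0$ and $T_\ell=-T_0$, and the sum condition forces $T_j=T_0=:T$. This is exactly the pattern $\varepsilon^{(j)}$. Plugging it back into the $k=0$ equation gives
\[
B=T^2-\frac34\,T\bigl(\varepsilon_1\zeta_{01}+\varepsilon_2\zeta_{02}+\varepsilon_3\zeta_{03}\bigr).
\]
With $\zeta_{0i}=-\eta_i/2$ this should reduce to $T^2-\eta_{3-j}T$ after the paper's normalization. I need to verify that the coefficient indeed comes out as $\eta_{3-j}$, with $\eta_0=0$ when $j=3$. This is a consistency check on the constants, and it may require the relation $\eta_3=\eta_1+\eta_2$. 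I must also check that the remaining three equations hold identically on this line, and not merely that they are implied. This holds by the symmetry $z\mapsto z+\omega_i/2$, which permutes the singular points and preserves the $\varepsilon$ pattern.

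The main obstacle is the second step: carrying out the $\zeta_{kj}$ bookkeeping correctly so that the differences of the four quadratic equations really factor. If the naive factoring fails, I would instead argue geometrically. Each $V_j$ is a curve isomorphic to $\mathbb{C}$ contained in $AP$, and $AP$ is cut out by four quadrics in the 4-dimensional space $\{\sum_j T_j=0\}\times\mathbb{C}$, so it has degree at most what Bézout allows. A degree and dimension count, or a Gröbner-style elimination of $B$ and then of $T_0$, would show there is nothing else. Irreducibility of each $V_j$ is clear, since it is the graph of a polynomial over $\mathbb{C}$.
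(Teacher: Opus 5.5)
\textbf{There is a genuine gap.} Your route is the natural one: eliminate $B$, pass to the sums $s_j=T_0+T_j$, and read off the sign patterns. But the step that carries all the content is only forecast, and the forecast is wrong. After eliminating $B$, the right-hand sides are not nonzero linear forms; they vanish identically on $\{\sum_jT_j=0\}$.

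The missing fact is that $\zeta_{kj}=\frac{\eta_k-\eta_j}{2}$ for all $k\neq j$, with $\eta_0=0$. This follows from the addition formula for $\zeta$ together with $\wp'(\frac{\omega_i}{2})=0$. Your quasi-period bookkeeping also gives it, e.g. $\zeta(\frac{\omega_1-\omega_2}{2})=\zeta(\frac{\omega_3}{2})-\eta_2=\frac{\eta_1-\eta_2}{2}$. Hence, on $\sum_jT_j=0$,
\[
\sum_{j\neq k}\zeta_{kj}T_j=\frac{\eta_k}{2}\sum_{j\neq k}T_j-\sum_{j\neq k}\frac{\eta_j}{2}T_j=-\frac12\sum_{j=0}^3\eta_jT_j ,
\]
which is independent of $k$. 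So $AP$ is exactly $\{T_0^2=T_1^2=T_2^2=T_3^2,\ \sum_jT_j=0\}$, with $B$ then determined.

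This cancellation is forced. Nonzero linear right-hand sides would have to vanish on $\varepsilon^{(1)},\varepsilon^{(2)},\varepsilon^{(3)}$, and these span the hyperplane $\sum_jT_j=0$. So your prediction contradicts both the statement and your own hoped-for homogeneous factorization.

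In your coordinates the system reads $s_1(s_2+s_3)=s_2(s_1+s_3)=s_3(s_1+s_2)=0$. The factor $s_k+s_\ell$ does not vanish ``exactly when the other $s$'s vanish''. However, adding the three equations gives $s_1s_2+s_1s_3+s_2s_3=0$, and then each equation yields $s_js_k=0$ for all $j\neq k$, so two of the $s_j$ vanish.

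Once the linear term is $k$-independent, all four equations coincide on each line. Your translation-symmetry check is therefore unnecessary; as stated it also ignores the quasi-period shift of $B$ and the sign flip $T\mapsto -T$. The B\'ezout fallback cannot work as a pure degree count: $T_0^2=T_1^2$ and $T_0^2=T_2^2$ already cut out four lines in the hyperplane, and only the third equation removes the extra one, $(1,1,1,-3)$.

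\textbf{Second gap: the constant.} Your constant check fails as set up. With the factor $\frac34$ in (\ref{cond-AP}), your formula gives $B=T^2-\frac34\eta_{3-j}T$, and no ``normalization'' turns this into $T^2-\eta_{3-j}T$. You must rederive the apparent condition. At $\frac{\omega_k}{2}$ write $q=\frac34x^{-2}+T_kx^{-1}+c_0+O(x)$ and $y=\sum_{n\ge0}a_nx^{n-1/2}$. Then $n(n-2)a_n=T_ka_{n-1}+c_0a_{n-2}+\cdots$, so $a_1=-T_ka_0$, and the equation is apparent iff $c_0=T_k^2$. Here $c_0=\sum_{j\neq k}\zeta_{kj}T_j+B$, because the $\wp$-terms contribute $\frac34(e_1+e_2+e_3)=0$. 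So the correct condition has no $\frac34$ in front of the $\zeta$-sum. Consequently $B=T^2+\frac12\sum_j\eta_jT_j$, which on the three sign patterns equals $T^2-\eta_2T$, $T^2-\eta_1T$ and $T^2$ (using $\eta_3=\eta_1+\eta_2$), i.e.\ $T^2-\eta_{3-j}T$ as claimed.
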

%
Notice that any component $V_j$ of $AP$ can be identified with $\mathbb{C}$, then we can simply use one variable $T$ to denote an element $(\mathbb{T}, B)\in AP$. 
To simplify notation, we denote (\ref{ODE-0}) by $\mathcal{L}(T)$ if $T\in AP$ and $\mathcal{L}_\ell (T)$ if $T\in V_\ell$  for some $\ell\in \{1,2,3\}$. 
Specifically, let $T\in V_\ell$, then (\ref{ODE-0}) becomes 
\begin{equation*}
\mathcal{L}_\ell(T):\qquad y''(z)=q_\ell(z;T)y(z),\quad z\in \mathbb{C},
\end{equation*}
where 
\begin{align*}
q_\ell (z;T)=\sum_{j=0}^3\frac{3}{4}\wp(z-\frac{\omega_j}{2};\tau)+T\sum_{j=0}^3\varepsilon_j^{(\ell)}\zeta(z-\frac{\omega_j}{2};\tau)+T^2-\eta_{3-\ell}T.
\end{align*}

\begin{lemma}\label{lem-even}
Let $\ell \in \{1,2,3\}$ and $\{\ell ,j,k\}=\{1,2,3\}$. Then
\begin{enumerate}
\item $q_\ell(z;T)$ has a period $\frac{\omega_\ell}{2}$, i.e., $q_\ell(z+\frac{\omega_\ell}{2};T)=q_\ell(z;T)$. 
\item 
$q_\ell(z;T)$ is even 
    with respect to $\frac{\omega_j}{4}$ and $\frac{\omega_k}{4}$, i.e., $q_\ell(\frac{\omega_j}{4}-z)=  q_\ell(\frac{\omega_j}{4}+z)$ and  $q_\ell(\frac{\omega_k}{4}-z)=  q_\ell(\frac{\omega_k}{4}+z)$.
\end{enumerate}
\end{lemma}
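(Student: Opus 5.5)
The plan is to verify both identities directly from the explicit form of $q_\ell(z;T)$, using only three facts: $\wp$ is even and $\Lambda_\tau$-periodic; $\zeta$ is odd; and $\zeta(z+\lambda)=\zeta(z)+\eta(\lambda)$ for $\lambda\in\Lambda_\tau$, where $\lambda\mapsto\eta(\lambda)$ is additive on $\Lambda_\tau$ (so $\eta(\omega_m)=\eta_m$). The constant $T^2-\eta_{3-\ell}T$ plays no role. In each case the $\wp$-part is handled by noting that the transformation permutes the four half periods modulo $\Lambda_\tau$. For the $\zeta$-part, I will check that the signs $\varepsilon^{(\ell)}_m$ are compatible with this permutation and that the quasi-period corrections cancel.

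For (1), replace $z$ by $z+\frac{\omega_\ell}{2}$. Translation by $\frac{\omega_\ell}{2}$ exchanges $0\leftrightarrow\frac{\omega_\ell}{2}$ and $\frac{\omega_j}{2}\leftrightarrow\frac{\omega_k}{2}$ modulo $\Lambda_\tau$, so the $\wp$-sum is invariant. Since $\varepsilon^{(\ell)}_0=\varepsilon^{(\ell)}_\ell=1$ and $\varepsilon^{(\ell)}_j=\varepsilon^{(\ell)}_k=-1$, the sign pattern is preserved. We need only track the lattice offsets:
\begin{itemize}
\item the $m=0$ term gives $\zeta(z+\frac{\omega_\ell}{2})=\zeta(z-\frac{\omega_\ell}{2})+\eta(\omega_\ell)$;
\item the $m=\ell$ term is exact;
\item the $m=j$ term gives $\zeta(z-\frac{\omega_k}{2}+\lambda)$ with $\lambda=\frac{\omega_\ell}{2}-\frac{\omega_j}{2}+\frac{\omega_k}{2}\in\Lambda_\tau$;
\item the $m=k$ term gives $\zeta(z-\frac{\omega_j}{2}+\lambda')$ with $\lambda'=\frac{\omega_\ell}{2}-\frac{\omega_k}{2}+\frac{\omega_j}{2}$.
\end{itemize}
The total change is therefore $T\big(\eta(\omega_\ell)-\eta(\lambda)-\eta(\lambda')\big)$. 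This vanishes because $\lambda+\lambda'=\omega_\ell$ and $\eta$ is additive.

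For (2), it suffices to show $q_\ell(\frac{\omega_j}{2}-z)=q_\ell(z)$; substituting $z=\frac{\omega_j}{4}+w$ then gives evenness about $\frac{\omega_j}{4}$, and the case of $k$ follows by exchanging $j$ and $k$. The map $z\mapsto\frac{\omega_j}{2}-z$ sends the pole $\frac{\omega_m}{2}$ to $\frac{\omega_j}{2}-\frac{\omega_m}{2}$. This exchanges $0\leftrightarrow\frac{\omega_j}{2}$ and $\frac{\omega_\ell}{2}\leftrightarrow\frac{\omega_k}{2}$ modulo $\Lambda_\tau$, so the $\wp$-part is again invariant because $\wp$ is even. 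By oddness,
$$\zeta\big(\tfrac{\omega_j}{2}-z-\tfrac{\omega_m}{2}\big)=-\zeta\big(z-(\tfrac{\omega_j}{2}-\tfrac{\omega_m}{2})\big),$$
and the exchanged indices carry opposite signs ($\varepsilon_0=-\varepsilon_j$ and $\varepsilon_\ell=-\varepsilon_k$). Hence the minus sign from oddness exactly restores $\sum_m\varepsilon_m\zeta(z-\frac{\omega_m}{2})$, up to quasi-period corrections. The terms $m=0$ and $m=j$ involve no lattice offset. For $m=\ell$ and $m=k$, the offset is the same lattice vector $\mu=\frac{\omega_k}{2}+\frac{\omega_\ell}{2}-\frac{\omega_j}{2}$. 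The correction is thus $-T(\varepsilon_\ell+\varepsilon_k)\eta(\mu)=0$.

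The only real obstacle is the bookkeeping of these lattice offsets, which depends on the convention $\omega_3=1+\tau$. Writing the offsets abstractly as above and invoking additivity of $\eta$ avoids any case split over $\ell\in\{1,2,3\}$.
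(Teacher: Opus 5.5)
Your proof is correct and follows essentially the same route as the paper. In both, the $\wp$-sum is invariant because the translation or reflection permutes the half periods, and the $\zeta$-terms match up via oddness and quasi-periodicity with the quasi-period corrections cancelling. The differences are cosmetic: you phrase (2) through the equivalent reflection $z\mapsto\frac{\omega_j}{2}-z$, and you replace the paper's case split ($\ell=j+k$, etc.) by naming the lattice offsets $\lambda,\lambda',\mu$ and using additivity of $\eta$, which is a cleaner way to do the same bookkeeping.
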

\begin{proof}
First, by considering $\ell=j+k, \, k=\ell+j,\, j=\ell+k$ respectively, we obtain that 
\begin{align*}
    &q_\ell (\frac{\omega_\ell}{2}+z)- q_\ell (z)\\
=&T\left(\sum\limits_{i=0}^{3}\epsilon_i^{(\ell)}\zeta(z+\frac{\omega_\ell}{2}-\frac{\omega_i}{2})-\sum\limits_{i=0}^{3}\epsilon_i^{(\ell)}\zeta(z-\frac{\omega_i}{2})\right)\\
=&T\left(\zeta(z+\frac{\omega_\ell}{2})-\zeta(z-\frac{\omega_\ell}{2})+\zeta(z-\frac{\omega_j}{2})-\zeta(z+\frac{\omega_\ell}{2}-\frac{\omega_j}{2})\right.\\
&\left.+\zeta(z-\frac{\omega_k}{2})-\zeta(z+\frac{\omega_\ell}{2}-\frac{\omega_k}{2})\right)\equiv 0.
\end{align*}
Hence, (1) is proved. In order to prove (2), we consider 
\begin{align*}
    q_\ell (\frac{\omega_j}{4}+z)- q_\ell (\frac{\omega_j}{4}-z)=&\frac{3}{4}\sum\limits_{i=0}^{3}\left(\wp(z+\frac{\omega_j}{4}+\frac{\omega_i}{2})-\wp(z-\frac{\omega_j}{4}+\frac{\omega_i}{2})\right)\\&+T\sum\limits_{i=0}^{3}\epsilon_i^{(\ell )}\left(\zeta(z+\frac{\omega_j}{4}-\frac{\omega_i}{2})+\zeta(z-\frac{\omega_j}{4}+\frac{\omega_i}{2})\right).
\end{align*}
Since 
\begin{align*}
&\sum\limits_{i=0}^{3}\left(\wp(z+\frac{\omega_j}{4}-\frac{\omega_i}{2})-\wp(z-\frac{\omega_j}{4}+\frac{\omega_i}{2})\right)\\
=&\wp(z+\frac{\omega_j}{4})+\wp(z+\frac{\omega_j}{4}-\frac{\omega_j}{2})+\wp(z+\frac{\omega_j}{4}-\frac{\omega_\ell}{2})+\wp(z+\frac{\omega_j}{4}-\frac{\omega_k}{2})\\
-&\wp(z-\frac{\omega_j}{4})-\wp(z-\frac{\omega_j}{4}+\frac{\omega_j}{2})-\wp(z-\frac{\omega_j}{4}+\frac{\omega_\ell}{2})-\wp(z-\frac{\omega_j}{4}+\frac{\omega_k}{2})\equiv 0
\end{align*}
and 
\begin{align*}
   &\sum\limits_{i=0}^{3}\epsilon_i^{(\ell)}\left(\zeta(z+\frac{\omega_j}{4}-\frac{\omega_i}{2})+\zeta(z-\frac{\omega_j}{4}+\frac{\omega_i}{2})\right)\\
=&\zeta(z+\frac{\omega_j}{4})+\zeta(z+\frac{\omega_j}{4}-\frac{\omega_\ell}{2})-\zeta(z+\frac{\omega_j}{4}-\frac{\omega_j}{2})-\zeta(z+\frac{\omega_j}{4}-\frac{\omega_k}{2})\\
+&\zeta(z-\frac{\omega_j}{4})+\zeta(z-\frac{\omega_j}{4}+\frac{\omega_\ell}{2})-\zeta(z-\frac{\omega_j}{4}+\frac{\omega_j}{2})-\zeta(z-\frac{\omega_j}{4}+\frac{\omega_k}{2})\\
=& \zeta(z+\frac{\omega_j}{4}-\frac{\omega_\ell}{2})-\zeta(z+\frac{\omega_j}{4}-\frac{\omega_k}{2})+\zeta(z-\frac{\omega_j}{4}+\frac{\omega_\ell}{2})-\zeta(z-\frac{\omega_j}{4}+\frac{\omega_k}{2}),
\end{align*}
which is identically equal to $0$ by considering $\ell=j+k, \, k=\ell+j,\, j=\ell+k$ respectively, then $ q_\ell (\frac{\omega_j}{4}+z)=q_\ell (\frac{\omega_j}{4}-z)$. The same proof is applied to $q_\ell(\frac{\omega_k}{4}-z)= q_\ell(\frac{\omega_k}{4}+z)$. Therefore, (2) is proved.
    \end{proof}

Notice that the local exponents of $\mathcal{L}_\ell(T)$ at the singularity $\omega_k/2$ are $-1/2$ or $3/2$, then the local monodromy matrix around  $\omega_k/2$ is $-I_{2}$, so we only need to consider the monodromy matrices of $\mathcal{L}_\ell(T)$  along the two fundamental cycles. 
Fix a point $z_0\in \mathbb{C}$ such that the potential $q_{\ell}(z;T)$ has no singularities along the lines $z_0+\mathbb{R}$ and $z_0+\tau\mathbb{R}$. Let $y_1(z)$ and $y_2(z)$ be two linearly independent solutions of $\mathcal{L}_{\ell}(T)$ near $z_0$. Since $q_{\ell}(z;T)$ is elliptic, there are two  $2\times2$ matrices $M_j(z_0),j=1,2$, such that
\begin{align*}
    \begin{pmatrix}y_1(z+\omega_j)\\y_2(z+\omega_j) \end{pmatrix}=M_j(z_0)\begin{pmatrix}
    y_1(z)\\y_2(z) \end{pmatrix} 
\end{align*}
for $z$ in a neighborhood of $z_0+\mathbb{R}$ or $z_0+\tau\mathbb{R}$. 
Notice that,  
for different $z_0'$, we have $M_j(z_0')=\epsilon_jM_j(z_0)$ for some $\epsilon_j\in\{\pm1\}$ because $\alpha_k-1\in \mathbb{N}, k=1, \cdots, n$. 
Hence, without loss of generality, we can fix the base point $z_0$ and then simply denote the monodromy matrices by $M_1,M_2$.  Furthermore,   
we always have $
M_1M_2=M_2M_1$.
Therefore, the monodromy representation can be split into two cases: \emph{completely reducible case and not completely reducible case.}
The equation $\mathcal{L}_{\ell}(T)$ is called completely reducible if $M_j,j=1,2$, can be simultaneously diagonalized. Otherwise, it is called not completely reducible.
%
%
If $\mathcal{L}_{\ell}(T)$ is completely reducible, up to a common conjugation, we can set \begin{equation}\label{CompRed}M_1=\begin{pmatrix} e^{-2\pi is}&0\\ 0& e^{2\pi is}\end{pmatrix},\quad M_2=\begin{pmatrix} e^{2\pi ir}&0\\ 0& e^{-2\pi ir}\end{pmatrix},\quad s,r\in \mathbb{C}.\end{equation}  Here, $M_1, M_2$ can be in $\{\pm I\}$ simultaneously.
If $\mathcal{L}_{\ell}(T)$  is not completely reducible, up to a common conjugation, we can set 
\begin{equation}\label{CompNRed}
M_1=\varepsilon_1\begin{pmatrix} 1&0\\ 1& 1\end{pmatrix},\quad M_2=\varepsilon_2\begin{pmatrix} 1&0\\ \mathcal{C}& 1\end{pmatrix},\end{equation} 
with $\varepsilon_1, \varepsilon_2\in \{\pm 1\}$ and $\mathcal{C} \in \mathbb{C}\cup \{\infty\}$.  If $\mathcal{C}=\infty$, then (\ref{CompNRed}) is understood as 
\begin{equation*}
M_1=\varepsilon_1\begin{pmatrix} 1&0\\ 0& 1\end{pmatrix},\quad M_2=\varepsilon_2\begin{pmatrix} 1&0\\ 1& 1\end{pmatrix}.\end{equation*} 
 Throughout the paper, $(s,r) \mod \mathbb{Z}^2$ and $\mathcal{C}$ are called the \emph{monodromy data} of  $\mathcal{L}_{\ell}(T)$   for each case.  
 
By introducing the following so-called spectral polynomial  $Q_\ell(T)$  for $\mathcal{L}_\ell(T)$:
   \begin{equation*}\label{cc12}
  Q_\ell(T)=16(T^2+e_\ell-e_j)(T^2+e_\ell-e_k),
    \end{equation*}
where $\{\ell, j, k\}=\{1,2,3\}$ and $e_i=\wp(w_i/2), i=1,2,3$,
we obtain that $\mathcal{L}_\ell(T)$ is completely reducible if and only if $Q_\ell(T)\neq 0$. See  \cite{FL2025}  for details. 

Let $u$ be a solution of (\ref{mfe}), by Lemma \ref{lem-app}, there exists $T\in V_\ell$ for some $\ell\in \{1,2,3\}$ such that $\mathcal{L}(u)=\mathcal{L}_\ell(T)$, which is in fact
 unitarizable. Here ``unitarizable" means that the monodromy matrices can be unitary by a common conjugation.  
The converse also holds, see the following lemma for details. 

\begin{lemma}\label{lem-unit}
Let $u$ be a solution of (\ref{mfe}), then $\mathcal{L}(u)$ is unitarizable. Conversely, let $T\in V_\ell$ for some $\ell\in \{1,2,3\}$,   if  $\mathcal{L}_\ell(T)$ is unitarizable, then 
there exists a solution $u$  of (\ref{mfe}) such that $\mathcal{L}(u)=\mathcal{L}_\ell(T)$.
\end{lemma}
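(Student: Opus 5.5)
For the first direction, let $u$ solve (\ref{mfe}) and let $f$ be a developing map. The plan is to use Lemma \ref{lem-app} together with the decomposition $AP=\cup V_j$ to write $\mathcal{L}(u)=\mathcal{L}_\ell(T)$ for some $\ell$ and some $T\in V_\ell$. By the discussion in Section \ref{sec-developing}, we can write $f=f_1/f_2$ with $f_1,f_2$ linearly independent solutions of $\mathcal{L}_\ell(T)$. Let $M_j$ be the monodromy matrices of $(f_1,f_2)^t$ along $\omega_j$, taken up to sign (the local monodromy at each $\omega_k/2$ is $-I$). Continuing $f$ along $\omega_j$ then gives $f(z+\omega_j)=M_jf(z)$ as a M\"obius action. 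Since $u$ is single valued on $E_\tau$, formula (\ref{u-f}) gives
\[
\frac{|f'(z+\omega_j)|^2}{(1+|f(z+\omega_j)|^2)^2}=\frac{|f'(z)|^2}{(1+|f(z)|^2)^2}.
\]
So $M_jf$ is another developing map of the same metric. By the uniqueness statement in Section \ref{sec-developing}, $M_jf=U_jf$ with $U_j\in SU(2)$. Because $f$ is nonconstant, this forces $M_j=\pm U_j$. Hence, in the basis $(f_1,f_2)$, both monodromy matrices are unitary up to sign, and $\mathcal{L}(u)$ is unitarizable.

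For the converse, suppose $\mathcal{L}_\ell(T)$ is unitarizable. I would choose a basis $(y_1,y_2)$ in which $M_1,M_2\in \pm SU(2)$; conjugating by an element of $SL(2,\mathbb{C})$ and rescaling preserves the Wronskian normalization. Then set $f=y_1/y_2$ and
\[
u=\log\frac{8|f'|^2}{(1+|f|^2)^2}=\log\frac{8|W|^2}{(|y_1|^2+|y_2|^2)^2}.
\]
The key steps, in order, are:
\begin{enumerate}
\item[(a)] $|y_1|^2+|y_2|^2$ is invariant under the action of $\pm$ a unitary matrix. Together with the local monodromy $-I$ at the half periods (which leaves $|y_1|^2+|y_2|^2$ unchanged), this makes $u$ well defined on $E_\tau\setminus E_\tau[2]$.
\item[(b)] Since $f$ is locally a ratio of independent solutions, $f'=-W/y_2^2$ is nonvanishing away from the singularities, and $|y_1|^2+|y_2|^2>0$ there because $y_1,y_2$ have no common zeros. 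Thus $u$ is smooth, and the standard Liouville computation gives $\Delta u+e^u=0$ away from $E_\tau[2]$.
\item[(c)] At each $\omega_k/2$ the equation is apparent, with exponents $-1/2$ and $3/2$. So there is a local basis $z^{-1/2}\phi_1$, $z^{3/2}\phi_2$ (with $z$ the local coordinate centered at $\omega_k/2$) with $\phi_i$ holomorphic and $\phi_i(0)\neq0$, and $f$ equals a M\"obius transform of $z^2\phi_2/\phi_1$. Therefore, after an $SU(2)$ normalization if needed, $e^u\sim |z|^2$. This yields the $4\pi\delta_{\omega_k/2}$ term in (\ref{mfe}) and gives $\mathcal{L}(u)=\mathcal{L}_\ell(T)$, since $S[f]=-2q_\ell$.
\end{enumerate}

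The main obstacle is step (c). One must handle the local behavior carefully, checking that $|y_1|^2+|y_2|^2\sim c|z|^{-1}$ while $|W|^2$ is constant. The cleanest route is to note that the metric formula $8|f'|^2/(1+|f|^2)^2$ is invariant under $f\mapsto Rf$ for $R\in SU(2)$, but the local normal form requires a general $SL(2,\mathbb{C})$ map. I would instead argue directly: $f$ is locally a M\"obius image of a function with a zero of order exactly $2$ (no logarithms). Hence $f$ has a critical point of multiplicity one at $\omega_k/2$ and is locally univalent elsewhere. A M\"obius map only rotates the local form, so the metric has conic angle $4\pi$ there, and near $\omega_k/2$ we get $u=2\log|z-\omega_k/2|+O(1)$.
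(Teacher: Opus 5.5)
Your proposal is correct and follows essentially the paper's proof. In the forward direction you invoke the $SU(2)$-uniqueness of developing maps from Section~\ref{sec-developing}, whereas the paper derives $\overline{M_\gamma^f}^TM_\gamma^f=I_{2}$ directly from $|f_1|^2+|f_2|^2=2\sqrt{2}e^{-u/2}|W|$; in the converse you work in an arbitrary unitary basis and use the local degree of $f$ at $\frac{\omega_k}{2}$, whereas the paper uses a diagonalizing basis and a three-case local expansion. Your conclusion $u=2\log|z-\frac{\omega_k}{2}|+O(1)$ also has the correct sign; the paper's ``$-2\log$'' is a sign slip.
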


\begin{proof}
On the one hand, let $u$ be a solution of (\ref{mfe})
and $f$ be a developing map of $u$, then there is a pair of linearly independent solutions $f_1(z), f_2(z)$ of $\mathcal{L}(u)$ such that $f=f_1/f_2$.  
Notice that $f_1(z), f_2(z)$ are multiple-valued with respect to $z$ and might have branch points at $E_\tau[2]$.
For any loop $\gamma\in \pi_1(E_\tau\setminus E_\tau[2])$, denote by $\gamma^*f_k(z)$ the analytic continuation of $f_k(z)$ along the loop $\gamma$, then there exists a matrix $M_\gamma^f \in \mathrm{SL}(2, \mathbb{Z})$ such that 
$$\gamma^*\left(\begin{array}{c}f_1(z)\\f_2(z)\end{array}\right)=\left(\begin{array}{c}\gamma^*f_1(z)\\ \gamma^*f_2(z)\end{array}\right)=M_\gamma^f \left(\begin{array}{c}f_1(z)\\f_2(z)\end{array}\right).$$ 
%
%
By (\ref{u-f}), we have 
\begin{equation}\label{eqn-norm-f}
\left(\begin{array}{cc}\overline{f_1(z)}&\overline{f_2(z)}\end{array}\right)\left(\begin{array}{c}f_1(z)\\f_2(z)\end{array}\right)=|f_1|^2+|f_2|^2=2\sqrt{2} e^{-\frac{1}{2}u}|W|,
\end{equation}
\begin{equation}\label{eqn-norm-f1}
\left(\begin{array}{cc}\overline{f_1(z)}&\overline{f_2(z)}\end{array}\right)\overline{M_\gamma^f}^TM_\gamma^f \left(\begin{array}{c}f_1(z)\\f_2(z)\end{array}\right)=2\sqrt{2} e^{-\frac{1}{2}\gamma^*u}|W|,
\end{equation}
where $W$ is the Wronskian of $f_1, f_2$. Notice that $W$ is a nonzero constant and  $u$ is single valued on $E_\tau\setminus E_\tau[2]$, i.e.,  $\gamma^*u=u$.  By (\ref{eqn-norm-f}), (\ref{eqn-norm-f1}) and taking the partial derivative with respect to $z, \overline{z}$, we get 
$\overline{M_\gamma^f}^TM_\gamma^f=I_{2}$, i.e., $M_\gamma^f\in SU(2)$, so $\mathcal{L}(u)$ is unitarizable. 

On the other hand, let $T\in V_\ell$ for some $\ell\in \{1,2,3\}$ and  $\mathcal{L}_\ell(T)$ is unitarizable,  
then $\mathcal{L}_\ell(T)$  is completely reducible and we can pick 
 two linearly independent solutions $y_1(z)$, $y_2(z)$ such that   the monodromy matrices $M_1, M_2$ under this basis satisfy (\ref{CompRed})
with $s, r\in \mathbb{R}$. 
  Let $f(z)=\frac{y_1(z)}{y_2(z)}$, we have 
\begin{equation}\label{eqn-f-1}f(z+\omega_k)=\lambda_k^2f(z), \quad k=1,2,
\end{equation}
where $\lambda_1=e^{-2\pi is}$ and $\lambda_2=e^{2\pi ir}$.
Then 
$$u(z)=\log \frac{8|f'(z)|^2}{(1+|f(z)|^2)^2}$$
is well-defined on $E_\tau\setminus E_\tau[2]$ and solve 
$$\Delta u+e^u=0,\quad \text{on}\,\, E_\tau\setminus E_\tau[2].$$

Since $T\in V_\ell$ and  the local exponents of $\mathcal{L}_\ell(T)$ at $\frac{\omega_k}{2}$ are $-\frac{1}{2},\frac{3}{2}$, then the two fundamental solutions locally can be expressed as one of the following forms:
\begin{align*}
&\textbf{Case 1.}\quad y_1(z)=(z-\frac{\omega_k}{2})^{\frac{3}{2}}g_1(z),\qquad y_2(z)=(z-\frac{\omega_k}{2})^{-\frac{1}{2}}g_2(z),\\
&\textbf{Case 2.}\quad y_1(z)=(z-\frac{\omega_k}{2})^{-\frac{1}{2}}g_1(z),\qquad y_2(z)=(z-\frac{\omega_k}{2})^{\frac{3}{2}}g_2(z),\\
&\textbf{Case 3.}\quad y_1(z)=(z-\frac{\omega_k}{2})^{-\frac{1}{2}}g_1(z),\qquad y_2(z)=(z-\frac{\omega_k}{2})^{-\frac{1}{2}}g_2(z),
\end{align*}
where $g_1(z), g_2(z)$ are holomorphic near $\frac{\omega_k}{2}$ and $g_j(\frac{\omega_k}{2})\neq 0, j=1,2$. 
All the three cases gives us 
$$f(z)= \frac{y_1(z)}{y_2(z)}
\sim  \text{const. }+(z-\frac{\omega_k}{2})^{\pm 2},\quad (z\to \frac{\omega_k}{2}),$$
 and then 
$$u(z)\sim -2\log |z-\frac{\omega_k}{2}|,\quad (z\to \frac{\omega_k}{2}),$$ which means $u$ is a solution of (\ref{mfe}) and then $\mathcal{L}(u)=\mathcal{L}_\ell(T)$. 
\end{proof}

Notice that $\mathcal{L}_\ell(T)$ is unitarizable if and only if it  is  completely reducible and $s, r\in \mathbb{R}$.  By the definition of special and nonspecial solutions (see Definition \ref{def-u}), we can classify solutions of  (\ref{mfe}) into special solutions and nonspecial solutions in terms of the monodromy data $(s,r)$. 

\begin{theorem}\label{lem-gen-u}
Let $u$ be a solution of (\ref{mfe}) and $(s,r)$ be the monodromy data of $\mathcal{L}(u)$, then 
 $u$ is special if $(s,r)\in (\frac{1}{2}\mathbb{Z})^2$ and nonspecial if $(s, r)\in \mathbb{R}^2\setminus (\frac{1}{2}\mathbb{Z})^2$. Moreover, 
\begin{enumerate}
\item if $(s,r)\in  (\frac{1}{2}\mathbb{Z})^2$, then 
(\ref{mfe}) has exactly a three-parameter family of solutions
$v$ such that $\mathcal{L}(v)=\mathcal{L}(u)$, i.e., $u$ generates  a three-parameter family of solutions. 
\item if $(s,r)\in  \mathbb{R}^2\setminus (\frac{1}{2}\mathbb{Z})^2$, 
 then (\ref{mfe}) has exactly a one-parameter family of solutions $v$ such that $\mathcal{L}(v)=\mathcal{L}(u)$, i.e., $u$ generates  an  one-parameter family of solutions. 
\end{enumerate}
\end{theorem}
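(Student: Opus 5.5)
The plan is to reduce everything to a computation of which Möbius images of a fixed developing map are again developing maps. Let $u$ be a solution with $\mathcal{L}(u)=\mathcal{L}_\ell(T)$. By Lemma \ref{lem-unit} and the remark after it, $\mathcal{L}(u)$ is completely reducible with $(s,r)\in\mathbb{R}^2$. Choose a basis $y_1,y_2$ in which $M_1,M_2$ are the diagonal matrices (\ref{CompRed}). Any solution $v$ with $\mathcal{L}(v)=\mathcal{L}(u)$ has developing map $g=Rf_0$, where $f_0=y_1/y_2$ and $R\in SL(2,\mathbb{C})$, since $S[g]=S[f_0]$. Conversely, $g=Rf_0$ gives a well-defined solution exactly when $u_g=\log\frac{8|g'|^2}{(1+|g|^2)^2}$ is single-valued on $E_\tau\setminus E_\tau[2]$. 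The local behaviour at $\omega_k/2$ is automatic, by the argument in the proof of Lemma \ref{lem-unit}. Single-valuedness is equivalent to the monodromy of the basis $R\cdot(y_1,y_2)^T$ lying in $\pm SU(2)$, that is, $RM_jR^{-1}\in \pm SU(2)$ for $j=1,2$. So the set of solutions generated by $u$ is identified with
$$\{R\in SL(2,\mathbb{C})\mid RM_jR^{-1}\in \pm SU(2),\ j=1,2\}/SU(2),$$
where $SU(2)$ acts on the left because $u_{UR f_0}=u_{Rf_0}$. The task is to compute the dimension of this quotient in the two cases.

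\textbf{Case $(s,r)\in(\frac12\mathbb{Z})^2$.} Then $M_1,M_2\in\{\pm I\}$, so every $R$ qualifies and the family is $SL(2,\mathbb{C})/SU(2)$, which is three-dimensional. This is exactly the mechanism described in the introduction: $f_0$ satisfies (\ref{hz}) with $\lambda_j=\pm1$, and since $f_0(z+\omega_j)=\lambda_j^2f_0(z)$ we in fact have $\lambda_j^2=1$.

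\textbf{Case $(s,r)\notin(\frac12\mathbb{Z})^2$.} Some $M_j$, say $M_1$, is diagonal with distinct eigenvalues $e^{\mp2\pi is}\neq\pm1$, which are unimodular and not real. I will write $R^{-1}=AP$ with $A\in SU(2)$ and $P$ positive definite Hermitian (polar decomposition); modulo left $SU(2)$ we may take $R=P^{-1}$. The condition becomes $N:=PM_jP^{-1}\in\pm SU(2)$. Since $M_j$ is itself unitary, $N^*N=I$ gives $M_j^*P^2M_j=P^2$, so $P^2$ commutes with $M_1$. Because $M_1$ has distinct eigenvalues, $P^2$, and hence $P$, is diagonal and positive: $P=\mathrm{diag}(e^{t/2},e^{-t/2})$. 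Conversely, any such $P$ commutes with both diagonal $M_j$, so the condition holds. Thus the admissible $R$ modulo $SU(2)$ are exactly $\mathrm{diag}(e^{-t/2},e^{t/2})$, $t\in\mathbb{R}$. These give the developing maps $e^{-t}f_0$, which is precisely the one-parameter family (\ref{ut}). Distinct $t$ give distinct solutions, since $e^{t}$ is recovered from $P$ modulo $SU(2)$ (for example from $\mathrm{tr}(R^*R)$ or from the eigenvalues of $R^*R$). So the family is exactly one-dimensional.

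The main obstacle is the single-valuedness criterion. I need to justify that $u_g$ is single-valued if and only if the monodromy of the new basis is in $\pm SU(2)$. The sign ambiguity comes from the $\epsilon_j$ base-point issue and from the local monodromy $-I$. I will handle it by reusing the norm identity (\ref{eqn-norm-f})--(\ref{eqn-norm-f1}) from Lemma \ref{lem-unit}: invariance of $|f_1|^2+|f_2|^2$ under $M$ forces $\overline{M}^TM=I$, and the converse is immediate. The dichotomy itself is then a consequence: special corresponds to $(s,r)\in(\frac12\mathbb{Z})^2$ and nonspecial to the complement, which is what Definition \ref{def-u} requires.
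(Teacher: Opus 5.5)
Your proposal is correct and follows essentially the same route as the paper: in the nonspecial case the paper also derives from $RM_kR^{-1}\in SU(2)$ that $\overline{R}^TR$ commutes with $\mathrm{diag}(e^{2\pi is},e^{-2\pi is})$, hence is diagonal ($a\overline{b}+c\overline{d}=0$), and then reads off $w=v_t$ with $e^{2t}=(|a|^2+|c|^2)/(|b|^2+|d|^2)$, which is your polar-decomposition normalization written in matrix entries. Two small slips need repair. First, from $R^{-1}=AP$ you get $R=P^{-1}A^{-1}$ with the unitary factor on the wrong side, so you should use the decomposition $R=UP$ to reduce modulo left $SU(2)$ (then $RM_jR^{-1}=PM_jP^{-1}$ as you wrote). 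Second, $\mathrm{tr}(R^*R)=2\cosh t$ and the eigenvalues of $R^*R$ only recover $|t|$; the matrix $R^*R$ itself (a complete invariant of the left $SU(2)$-coset, equal to $\mathrm{diag}(e^{-t},e^{t})$ in your normalization) is what separates $t$ from $-t$.
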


\begin{proof}
Let $u$ be a solution of (\ref{mfe}) and  
 $T\in V_\ell$ for some $\ell\in \{1,2,3\}$ such that $\mathcal{L}(u)=\mathcal{L}_\ell(T)$ is unitarizable,
then $\mathcal{L}_\ell(T)$  is completely reducible and we can pick 
 two linearly independent solutions $y_1(z)$, $y_2(z)$ such that   the monodromy matrices $M_1, M_2$ under this basis satisfy (\ref{CompRed})
with $s, r\in \mathbb{R}$. 
  Let $f(z)=\frac{y_1(z)}{y_2(z)}$, we have 
\begin{equation}\label{eqn-f-1}f(z+\omega_k)=\lambda_k^2f(z), \quad k=1,2,
\end{equation}
where $\lambda_1=e^{-2\pi is}$ and $\lambda_2=e^{2\pi ir}$.
Then 
$$v(z)=\log \frac{8|f'(z)|^2}{(1+|f(z)|^2)^2}$$
is a solution of (\ref{mfe}) and    $\mathcal{L}(v)=\mathcal{L}_\ell(T)$ by the proof of Lemma \ref{lem-unit}.

If $(s,r)\in  (\frac{1}{2}\mathbb{Z})^2$, then the monodromy matrices under any basis are  $\pm I_{2}$.
 By a similar analysis for $Rf$ instead of $f$ in the proof of Lemma \ref{lem-unit},  we have 
$$v_R(z):=\log \frac{8|(Rf)'(z)|^2}{(1+|Rf(z)|^2)^2}$$
is also a solution of (\ref{mfe}) and $\mathcal{L}(v_R)=\mathcal{L}_\ell(T)$. 
Let $w$ be a solution of (\ref{mfe}) satisfying $\mathcal{L}(w)=\mathcal{L}_\ell(T)$ and $g$ be a developing map of $w$, then
$S[g]=S[f]$, thus there exists some $R\in SL(2, \mathbb{C})$ such that $g=Rf$, i.e.,  $w(z)=v_R(z)$. 
By a direct computation, $v_{R_1}(z)=v_{R_2}(z)$  if and only if $R_2=PR_1$ for some $P\in SU(2)$. 
 Therefore, the set of solutions of (\ref{mfe}) such that the monodromy data $(s,r)\in  (\frac{1}{2}\mathbb{Z})^2$  is isomorphic to $SL(2, \mathbb{C})/SU(2)$.

If $(s,r)\in \mathbb{R}^2\setminus (\frac{1}{2}\mathbb{Z})^2$, 
without loss of generality, we can assume that $s\notin  \frac{1}{2}\mathbb{Z}$.
Notice that  $f(z)=\frac{y_1(z)}{y_2(z)}$, we can replace the eigenfunction $y_1(z)$ with a nonzero constant multiple $\lambda y_1(z)$, where $\lambda \in \mathbb{C}\setminus \{0\}$. Denote by $e^t:=|\lambda|$ with $t\in \mathbb{R}$, we have 
$$v_{t}(z)=\log \frac{8e^{2t}|f'(z)|^2}{(1+e^{2t}|f(z)|^2)^2}$$
is a solution of (\ref{mfe}) and    $\mathcal{L}(v_t)=\mathcal{L}_\ell(T)$. By a direct computation, $v_{t_1}(z)=v_{t_2}(z)$ if and only if $t_1=t_2$.  Let $w$ be a solution of (\ref{mfe}) satisfying $\mathcal{L}(w)=\mathcal{L}_\ell(T)$ and  $g$ be a developing map of $w$, then   $S[g]=S[f]$, thus 
 $$g=\frac{ay_1+by_2}{cy_1+dy_2}=\frac{af+b}{cf+d},\quad \text{for some}\,\, R:=\left(\begin{array}{cc}a&b\\ c&d\end{array}\right)\in SL(2, \mathbb{C}),$$
which gives us 
$$M_k^g(\tau)=RM_k^f(\tau)R^{-1},\quad k=1,2.$$
Notice that $M^g_1(\tau), M^g_2(\tau)\in SU(2)$, i.e., $\overline{M_k^g(\tau)}^TM_k^g(\tau)=I_{2}, \, k=1,2$,
then
 $$\overline{R}^TR\left(\begin{array}{cc}e^{2\pi i s}&0\\ 0&e^{-2\pi i s}\end{array}\right)=\left(\begin{array}{cc}e^{2\pi i s}&0\\ 0&e^{-2\pi i s}\end{array}\right)\overline{R}^TR.$$
Combining with $s\not\in \frac{1}{2}\mathbb{Z}$, we have that $$\left(\begin{array}{cc}|a|^2+|c|^2&\overline{a}b+\overline{c}d\\ a\overline{b}+c\overline{d}&|b|^2+|d|^2\end{array}\right)=\overline{R}^TR=\left(\begin{array}{cc}|a|^2+|c|^2&0\\0&|b|^2+|d|^2\end{array}\right),$$ namly, $a\overline{b}+c\overline{d}=0$. Hence, $$|\det R|^2=|ad-bc|^2=(|a|^2+|c|^2)(|b|^2+|d|^2),$$ and then
\begin{align*}
w(z)=\log \frac{8|g'(z)|^2}{(1+|g(z)|^2)^2}
=\log \frac{8|ad-bc|^2|f'(z)|^2}{(|{b}|^2+|{d}|^2+(|a|^2+|c|^2)|f|^2)^2}.
\end{align*}
Let $e^{2t}:=\frac{|a|^2+|c|^2}{|{b}|^2+|{d}|^2}$, we have 
$w(z)=v_{t}(z)$. Therefore,  the set of solutions of (\ref{mfe}) such that the monodromy data $(s,r)\in  \mathbb{R}^2\setminus (\frac{1}{2}\mathbb{Z})^2$  is isomorphic to $\mathbb{R}$. 
\end{proof}

Theorem \ref{thm-1.1} (1) is obtained from Theorem \ref{lem-gen-u}.
Notice that $\mathcal{L}(T)$ with $T\in AP$ is completely reducible and the monodromy data $(s,r)\in  (\frac{1}{2}\mathbb{Z})^2$ if and only if $T=0$ (see \cite[Theorem 1.1]{FL2025}\label{thm-intro-1}).
In addition, by \cite[Lemma 4.5]{FL2025}\label{lem-YV},
 any solution of $\mathcal{L}(0)$, up to a constant multiple,  can be expressed as 
$$y(z;a)=e^{-\frac{1}{2}\eta_3z}\frac{\sigma(z-a)\sigma(z+a)}{(\prod_{j=0}^3(z-\frac{\omega_j}{2})^{\frac{1}{2}}}, \quad \text{with}\,\, a\in \mathbb{C},$$
where $\sigma$ is the Weierstrass sigma function.
Therefore, we obtain that 
\begin{theorem}\label{cor-special}
For any $\tau\in\mathbb{H}$, let $u$ be a solution of (\ref{mfe}) and  $\mathcal{L}(u)=\mathcal{L}(T)$ with $T\in AP$, then $u$ is special if and only if $T=0$. 
In particular,  
  all special solutions of (\ref{mfe}) form an one three-parameter family.
Moreover, any special solution of (\ref{mfe}) is even.  
\end{theorem}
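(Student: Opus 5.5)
We combine Theorem \ref{lem-gen-u} with the two facts quoted from \cite{FL2025}. The plan has three steps: first characterize special solutions by $T=0$, then identify the set of special solutions with $SL(2,\mathbb{C})/SU(2)$, and finally show that each special solution is even.

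\emph{Step 1: $u$ is special if and only if $T=0$.} Let $u$ be a solution of (\ref{mfe}) and write $\mathcal{L}(u)=\mathcal{L}(T)$ with $T\in AP$. By Lemma \ref{lem-unit}, $\mathcal{L}(T)$ is unitarizable, so it is completely reducible with monodromy data $(s,r)\in\mathbb{R}^2$. By Theorem \ref{lem-gen-u}, $u$ is special exactly when $(s,r)\in(\frac12\mathbb{Z})^2$. By \cite[Theorem 1.1]{FL2025}, this happens exactly when $T=0$.

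\emph{Step 2: the special solutions form one family $\cong SL(2,\mathbb{C})/SU(2)$.} The point $T=0$ lies in every $V_\ell$, and all of them give the same potential
\[
q(z)=\tfrac34\sum_j\wp(z-\tfrac{\omega_j}{2}),\qquad B=0 .
\]
So all special solutions share the single equation $\mathcal{L}(0)$. Its monodromy is $\pm I$, hence unitarizable, and Lemma \ref{lem-unit} gives at least one solution $u_0$. Part (1) of Theorem \ref{lem-gen-u} then says that every solution $w$ with $\mathcal{L}(w)=\mathcal{L}(0)$ has the form $v_R$, and that $v_{R_1}=v_{R_2}$ if and only if $R_2\in SU(2)R_1$. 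Therefore the special solutions form exactly one three-parameter family, isomorphic to $SL(2,\mathbb{C})/SU(2)$.

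\emph{Step 3: evenness.} A developing map of any special solution is a ratio of two independent solutions of $\mathcal{L}(0)$. By \cite[Lemma 4.5]{FL2025}, we can take it to be
\[
f(z)=c\,\frac{y(z;a)}{y(z;b)}=c\,\frac{\sigma(z-a)\sigma(z+a)}{\sigma(z-b)\sigma(z+b)},
\]
with $c\neq0$ and $a\not\equiv\pm b$. The exponential factors and the square-root denominators cancel in this ratio. Since $\sigma$ is odd, $\sigma(-z-a)\sigma(-z+a)=\sigma(z+a)\sigma(z-a)$, and the same holds with $b$. Hence $f(-z)=f(z)$ identically. Then $f'(-z)=-f'(z)$, so $|f'|^2$ is even, and by (\ref{u-f}) $u(-z)=u(z)$. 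This argument needs no $SU(2)$ ambiguity: the formula for $u$ is the same for every developing map of $u$, and every special solution has one of the above form.

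\emph{Main obstacle.} The delicate point is justifying that every pair of independent solutions of $\mathcal{L}(0)$ is a pair $y(\cdot;a),\,y(\cdot;b)$ up to constants. This is exactly what \cite[Lemma 4.5]{FL2025} provides when read as "every solution". If one only trusts that lemma for a particular basis, there is a fallback. Every solution $y$ of $\mathcal{L}(0)$ has even square $y^2$ up to the common branch factor, because $q$ is even. So $y(-z)=\pm\,(\text{branch factor})\cdot y(z)$. For two solutions with the same sign this makes the ratio even. If the sign choices differed, the ratio would be odd, and we would check that the parity of the branch factor is uniform across all solutions of $\mathcal{L}(0)$.
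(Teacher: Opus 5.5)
Your proposal is correct and follows essentially the same route as the paper: special $\Leftrightarrow$ $T=0$ by combining Theorem \ref{lem-gen-u} with \cite[Theorem 1.1]{FL2025}, and evenness by writing the developing map as $c\,\sigma(z-a)\sigma(z+a)/(\sigma(z-b)\sigma(z+b))$ via \cite[Lemma 4.5]{FL2025} and using that $\sigma$ is odd. Your Step 2 just makes explicit what the paper leaves to Theorem \ref{lem-gen-u}(1).

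The fallback in your last paragraph is unnecessary. The cited lemma does cover every solution, since $\sigma(z-a)\sigma(z+a)=\sigma(z)^2\sigma(a)^2(\wp(a)-\wp(z))$. Its justification as written is also too weak: evenness of $q$ alone does not make every solution an eigenvector of $z\mapsto -z$ (e.g.\ $y''=0$). What actually forces this is that at $0$ the exponents $-\frac12,\frac32$ are log-free, so every local solution is $z^{-1/2}$ times an even series.
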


\begin{proof}
We only need to show the last statement, i.e., any special solution of (\ref{mfe}) is even.   Let $u$ be a special solution of (\ref{mfe}), then $\mathcal{L}(u)=\mathcal{L}(0)$.  Let $h$ be a developing map of $u$ and $y_0, y_1$ are two solutions of $\mathcal{L}(0)$ such that $h=y_0/y_1$. 
Notice that, up to some constant multiples, 
$$y_i(z)=e^{-\frac{1}{2}\eta_3z}\frac{\sigma(z-z_i)\sigma(z+z_i)}{(\prod_{j=0}^3\sigma(z-\frac{\omega_j}{2})^{\frac{1}{2}}}, \qquad i=0,1,$$
then the developing map 
$$h(z)=\frac{y_0(z)}{y_1(z)}=\frac{\sigma(z-z_0)\sigma(z+z_0)}{\sigma(z-z_1)\sigma(z+z_1)}.$$
Since  $\sigma(-z)=-\sigma(z)$, then $h(-z)=h(z)$ by the transformation law, thus $u(-z)=u(z)$. 
\end{proof}
By Theorem \ref{cor-special}, we get Theorem \ref{thm-1.1} (2).

\section{Multiple Green functions}\label{sec-G2}
In this section, we will study critical points of the multiple Green function $G_2(z_1, z_2)$, which is defined in (\ref{def-G2}). 

 A pair $\{z_1, z_2\}$ is called a \emph{critical point} of $G_2$ if the following equations hold:
\begin{equation}\label{critical-0}
\frac{\partial G_2}{\partial z_1}(z_1, z_2)=0 \qquad \text{and}\qquad \frac{\partial G_2}{\partial z_2}(z_1, z_2)=0.
\end{equation}
It was proved in \cite{LW2010} that 
\begin{equation}
-4\pi \frac{\partial G}{\partial z}(z;\tau)=\zeta(z)-\eta(z),\qquad z\in E_\tau, 
\end{equation}
where $\eta(z)$ is defined by 
\begin{equation}
\eta(r+s\tau)=r\eta_1(\tau)+s\eta_2(\tau), \qquad s, r\in \mathbb{R}. 
\end{equation}
Here, $\eta_j(\tau):=\zeta(z+\omega_j;\tau)-\zeta(z;\tau),\,j=1,2,3$ are the quasi-periods of $\zeta(z;\tau)$.  In particular, we have $\eta(\omega_j)=\eta_j=2\zeta(\frac{\omega_j}{2})$.  By a direct computation, we have 
\begin{align*}
\eta(t z)&=t\eta(z),\qquad \qquad\quad \, \,\,  z\in\mathbb{C},\,\, t\in\mathbb{R}\\
\eta(z_1+z_2)&=\eta(z_1)+\eta(z_2),\qquad z_1, z_2\in \mathbb{C},
\end{align*}
i.e., $\eta(z)$ is $\mathbb{R}$-linear with respect to  
 $z\in \mathbb{C}$,  
then
\begin{align*}
&-4\pi \frac{\partial G_2}{\partial z_1}(z_1, z_2)\\
=&\zeta(z_1-z_2)-\eta(z_1-z_2)-\frac{1}{2}\sum_{j=0}^3\left(\zeta(z_1-\frac{\omega_j}{2})-\eta(z_1-\frac{\omega_j}{2})\right)\\
=&\zeta(z_1-z_2)-\frac{1}{2}\sum_{j=0}^3\zeta(z_1-\frac{\omega_j}{2})+\eta\left(\frac{1}{2}\sum_{j=0}^3(z_1-\frac{\omega_j}{2})-(z_1-z_2)\right)\\
=&\zeta(z_1-z_2)-\frac{1}{2}\sum_{j=0}^3\zeta(z_1-\frac{\omega_j}{2})+\eta(z_1+z_2)-\frac{1}{2}\eta_3, 
\end{align*}
Similarly, 
\begin{align*}
-4\pi \frac{\partial G_2}{\partial z_2}(z_1, z_2)=\zeta(z_2-z_1)-\frac{1}{2}\sum_{j=0}^3\zeta(z_2-\frac{\omega_j}{2})+\eta(z_1+z_2)-\frac{1}{2}\eta_3.
\end{align*}
Hence,  a pair $\{z_1, z_2\}$ is a {critical point} of $G_2$ if and only if the following equations hold:
\begin{equation}\label{critical-2}
\left\{
\begin{aligned}
\zeta(z_1-z_2)-\frac{1}{2}\sum_{j=0}^3\zeta(z_1-\frac{\omega_j}{2})+\eta(z_1+z_2)=\frac{1}{2}\eta_3,\\
\zeta(z_2-z_1)-\frac{1}{2}\sum_{j=0}^3\zeta(z_2-\frac{\omega_j}{2})+\eta(z_1+z_2)=\frac{1}{2}\eta_3,
\end{aligned}\right.
\end{equation}
which implies the pair  $\{z_1, z_2\}$ satisfies 
\begin{equation}\label{critical-1}
2\zeta(z_1-z_2)+\frac{1}{2}\sum_{j=0}^3\left(\zeta(z_2-\frac{\omega_j}{2})-\zeta(z_1-\frac{\omega_j}{2})\right)=0
\end{equation}
Notice that 
    \begin{equation}\label{ccccc16}
        \zeta(2z)-\frac{1}{2}\sum_{k=0}^3\zeta(z-\frac{\omega_k}{2})=\frac{1}{2}\eta_3.
\end{equation}
Indeed, since $\zeta^\prime(z)=-\wp(z)$, the left hand side of the above identity is a constant independent of $z$, which follows from the following identity:
    \begin{equation*}
        4\wp(2z)=\sum_{k=0}^3\wp(z-\frac{\omega_k}{2}).
    \end{equation*}
Here we use $\sum_{k=1}^3e_k=0$. The constant is $\eta_3/2$ can be obtained by analysing the behavior of the left hand side near $z=0$.

By (\ref{ccccc16}), we obtain that (\ref{critical-1}) is equivalent to 
\begin{equation}\label{ccccc17}
    2\zeta(z_2-z_1)+\zeta(2z_1)-\zeta(2z_2)=0.
\end{equation}
We have the following nontrivial result about (\ref{ccccc17}).
\begin{lemma}\cite[Lemma 4.5]{FL2025}
Let $z_1, z_2\in E_\tau\setminus E_\tau[2]$. Then $z_1, z_2$ satisfy (\ref{ccccc17}) if and only if   
$z_2=-z_1$ on $E_\tau$ or  $z_2=z_1-\frac{\omega_j}{2}$ for some $j\in \{1,2,3\}$ on $E_\tau$. That is, $Y=\cup_{j=0}^3Y_j$, where  
 \begin{align*}
Y_0:=&\left\{\{z_1, z_2\}\in \mathrm{Sym}^2(E_\tau \setminus E_\tau[2])\mid  z_1+z_2=0\right\}\\
Y_j:=&\left\{\{z_1, z_2\}\in \mathrm{Sym}^2(E_\tau \setminus E_\tau[2])\mid  z_1-z_2=\frac{\omega_j}{2}\right\}, \quad j=1,2,3,\\
Y:=&\left\{\{z_1, z_2\}\in \mathrm{Sym}^2(E_\tau \setminus E_\tau[2])\mid  z_1\neq z_2 \,\,\text{and}\,\,(\ref{ccccc17}) \,\, \text{holds}.\right\}.
\end{align*}
\end{lemma}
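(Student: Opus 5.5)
We prove the two directions separately. The ``if'' direction is a direct check, and the ``only if'' direction comes from counting zeros of an elliptic function in the single variable $z_2$.

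\emph{The ``if'' direction.} If $z_2=-z_1$, then by oddness of $\zeta$ the left side of (\ref{ccccc17}) is
$$-2\zeta(2z_1)+\zeta(2z_1)+\zeta(2z_1)=0.$$
If $z_2=z_1-\frac{\omega_j}{2}$, we use $\zeta(2z_1-\omega_j)=\zeta(2z_1)-\eta_j$ and $2\zeta(-\frac{\omega_j}{2})=-\eta_j$. The left side becomes
$$-\eta_j+\zeta(2z_1)-\zeta(2z_1)+\eta_j=0.$$

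\emph{The ``only if'' direction.} Fix $z_1\in E_\tau\setminus E_\tau[2]$ and regard
$$F(w):=2\zeta(w-z_1)+\zeta(2z_1)-\zeta(2w)$$
as a function of $w=z_2$. Under $w\mapsto w+\omega_k$, the term $2\zeta(w-z_1)$ gains $2\eta_k$ and $\zeta(2w)$ also gains $2\eta_k$. Hence $F$ is elliptic.

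Its poles are all simple:
\begin{itemize}
\item at $w=z_1$ the residue is $2$;
\item at each $w=\frac{\omega_k}{2}$, $k=0,1,2,3$, the residue is $-\frac12$, since $\zeta(2w)$ has residue $\frac12$ there.
\end{itemize}
These residues sum to $0$, as they must. Since $z_1\notin E_\tau[2]$, the five poles are distinct, so $F$ has exactly five zeros counted with multiplicity.

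The ``if'' part already provides the four zeros $-z_1$ and $z_1-\frac{\omega_j}{2}$, $j=1,2,3$. We locate the fifth zero $x$ by Abel's theorem: the sum of zeros equals the sum of poles modulo $\Lambda_\tau$. The poles sum to
$$z_1+\tfrac{\omega_1}{2}+\tfrac{\omega_2}{2}+\tfrac{\omega_3}{2}=z_1+\omega_3\equiv z_1.$$
The four known zeros sum to
$$-z_1+3z_1-\omega_3\equiv 2z_1.$$
Therefore $x\equiv -z_1$, so $-z_1$ is a double zero, at least when the four listed zeros are distinct. Consequently $F$ has no zeros other than the listed ones.

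\emph{The main obstacle: coincidences among the four zeros.} This happens when $-z_1=z_1-\frac{\omega_j}{2}$ or $z_1-\frac{\omega_j}{2}=z_1-\frac{\omega_k}{2}$, i.e. when $2z_1\in\frac12\Lambda_\tau$. The second type never occurs, so only finitely many $z_1$, namely quarter periods, are exceptional. At such a $z_1$ the multiplicity bookkeeping used above is unclear. We handle these points by continuity. The zero divisor of $F$ depends holomorphically on $z_1$, and its poles stay away from the zeros because $z_1-\frac{\omega_j}{2}\notin E_\tau[2]$. By Hurwitz's theorem, every zero of $F$ for an exceptional $z_1$ is a limit of zeros for nearby generic $z_1'$. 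Those zeros are exactly $-z_1'$ and $z_1'-\frac{\omega_j}{2}$, whose limits are $-z_1$ and $z_1-\frac{\omega_j}{2}$.

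Hence in all cases $z_2=-z_1$ or $z_2=z_1-\frac{\omega_j}{2}$ for some $j\in\{1,2,3\}$, which gives $Y=\cup_{j=0}^3Y_j$.
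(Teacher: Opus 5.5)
Your proof is correct. Note that the paper does not prove this lemma at all; it quotes it from \cite[Lemma 4.5]{FL2025}. So there is no in-paper argument to compare against, and yours works as a self-contained replacement. The pieces all check out:
\begin{itemize}
\item You view (\ref{ccccc17}) as an elliptic function $F$ of $z_2$ with five distinct simple poles, with residues $2$ at $z_1$ and $-\tfrac12$ at each point of $E_\tau[2]$.
\item You exhibit the four obvious zeros $-z_1$ and $z_1-\frac{\omega_j}{2}$, $j=1,2,3$.
\item Abel's theorem correctly places the fifth zero at $-z_1$ in the generic case.
\item The Hurwitz limiting argument correctly handles the twelve quarter-period values of $z_1$, where $-z_1$ collides with some $z_1-\frac{\omega_j}{2}$.
\end{itemize}

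You can, however, replace both the Abel step and the continuity step with a direct derivative check. Since $F'(w)=-2\wp(w-z_1)+2\wp(2w)$, you get $F'(-z_1)=0$, so $-z_1$ is always at least a double zero. Next, $F''(-z_1)=-2\wp'(-2z_1)+4\wp'(-2z_1)=-2\wp'(2z_1)$. This vanishes exactly when $2z_1\equiv\frac{\omega_j}{2}$ for some $j\in\{1,2,3\}$, because $2z_1\notin\Lambda_\tau$. That is precisely the exceptional case, so there $-z_1$ is at least a triple zero.

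In both cases the listed zeros already carry total multiplicity at least $5$, which is the order of $F$. Hence there are no further zeros, and you also obtain the exact multiplicities for free. Your route is slightly more conceptual, since the Abel computation explains where the extra zero must sit. The derivative check is shorter and needs no limiting argument.
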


Clearly, $Y_j\cap Y_k=\emptyset$ for $j\neq k$ and $j,k\in \{1,2,3\}$. 
Denoted by $\ell\in \{1,2,3\}$ and $\{\ell, j, k\}=\{1,2,3\}$, we have 
\begin{equation}\label{Y0Yell}
Y_0\cap Y_\ell=\left\{\{\frac{\omega_\ell}{4}, -\frac{\omega_\ell}{4}\},\,\{\frac{\omega_\ell}{4}+\frac{\omega_j}{2}, -\frac{\omega_\ell}{4}+\frac{\omega_j}{2}\} \right\}.
\end{equation}
%
%
%
%
%
%
%
%
Now we can describe the critical points of $G_2$ in terms of $Y$.
\begin{theorem}\label{lem-G2-2}
A pair $\{z_1, z_2\}\in \mathrm{Sym}^2(E_\tau \setminus E_\tau[2])\setminus \Delta_2$  is a critical point of $G_2$ if and only if $\{z_1, z_2\}\in Y$ and one of the following two cases hold:
\begin{enumerate}
\item $\{z_1, z_2\}\in Y_0$; 
\item $\{z_1,z_2\}\in \cup_{j=1}^3Y_j$ and $2z_1$ is a critical point of $G$.  
\end{enumerate}
\end{theorem}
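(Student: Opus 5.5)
The plan is to reduce the system (\ref{critical-2}) to the single equation (\ref{critical-1}) plus one remaining scalar equation, and to handle that remaining equation separately on $Y_0$ and on each $Y_j$.

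First observe that (\ref{critical-2}) is equivalent to the pair consisting of the difference of its two equations and its first equation. The difference is exactly (\ref{critical-1}), which by (\ref{ccccc16}) is equivalent to (\ref{ccccc17}). By the lemma quoted from \cite{FL2025}, this means $\{z_1,z_2\}\in Y=\cup_{j=0}^3Y_j$. It then suffices to determine, for $\{z_1,z_2\}\in Y_j$, when the first equation of (\ref{critical-2}) holds. We rewrite it using (\ref{ccccc16}) with $z=z_1$, which gives $\frac12\sum_k\zeta(z_1-\frac{\omega_k}{2})=\zeta(2z_1)-\frac12\eta_3$. The first equation becomes
\begin{equation*}
\zeta(z_1-z_2)-\zeta(2z_1)+\eta(z_1+z_2)=0 .
\end{equation*}

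\textbf{Case $Y_0$.} Here $z_2=-z_1$ modulo $\Lambda_\tau$. The precise lift matters, because $\eta$ is only $\mathbb{R}$-linear and not periodic. Take representatives $z_2=-z_1$ exactly; this is legitimate because $G_2$ is well defined on the torus, so we may evaluate with any lift, and the expression above is invariant under changing lifts, since $\zeta(z+\omega)-\eta(z+\omega)=\zeta(z)-\eta(z)$. Then $\zeta(2z_1)-\zeta(2z_1)+\eta(0)=0$, so every point of $Y_0$ is a critical point. This is also consistent with the evenness of $G$.

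\textbf{Case $Y_j$, $j\ge1$.} Take lifts with $z_2=z_1-\frac{\omega_j}{2}$. Then $\zeta(z_1-z_2)=\zeta(\frac{\omega_j}{2})=\frac12\eta_j$, and
\begin{equation*}
\eta(z_1+z_2)=\eta(2z_1)-\tfrac12\eta(\omega_j)=\eta(2z_1)-\tfrac12\eta_j .
\end{equation*}
So the equation becomes $\eta(2z_1)-\zeta(2z_1)=0$, that is, $\partial_zG(2z_1)=0$. Since $G$ is real-valued, this says $2z_1$ is a critical point of $G$. This gives condition (2).

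The main obstacle is bookkeeping the lift-dependence of $\eta$: we must justify that the combination $\zeta-\eta$ is $\Lambda_\tau$-periodic, so that choosing lifts with $z_2=-z_1$ or $z_2=z_1-\frac{\omega_j}{2}$ exactly is harmless. We should also check that the second equation of (\ref{critical-2}) is then automatic, which follows because it differs from the first only by (\ref{critical-1}). Finally, we note that $z_1\neq z_2$ and $z_i\notin E_\tau[2]$ are preserved throughout.
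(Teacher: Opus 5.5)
Your proposal is correct and follows essentially the same route as the paper: you use the difference of the two equations in (\ref{critical-2}), which is (\ref{critical-1}) and hence (\ref{ccccc17}), to place critical points in $Y$; then (\ref{ccccc16}), the $\mathbb{R}$-linearity of $\eta$ and $\eta(\omega_j/2)=\zeta(\omega_j/2)$ turn the remaining equation on $Y_j$ into $\zeta(2z_1)=\eta(2z_1)$. Two points are a bit tighter than the paper's wording. On $Y_0$ you verify by direct substitution that the first equation holds identically, whereas the paper argues ``the two equations are equivalent, hence the system is equivalent to their difference'', which does not by itself show that the equations hold. You also check explicitly that the result is independent of the choice of lift.
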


\begin{proof}
Notice that  (\ref{critical-2}) implies  (\ref{ccccc17}), then all critical points of $G_2$ lie in $Y$, thus
a pair $\{z_1, z_2\}\in \mathrm{Sym}^2(E_\tau \setminus E_\tau[2])\setminus \Delta_2$ is a critical point of $G_2$ if and only if the pair $\{z_1, z_2\}\in Y=\cup_{j=0}^3Y_j$ and satisfies  (\ref{critical-2}).

Let $\{z_1, z_2\}\in Y_0$, i.e., $z_2=-z_1$ on $E_\tau$. It is easy to see that the first equation of (\ref{critical-2}) is equivalent to the second equation, then  (\ref{critical-2}) is equivalent to 
(\ref{critical-1}), and in turn, to  (\ref{ccccc17}), which is always satisfied by $\{z_1, -z_1\}$. Hence, all pairs in $Y_0$ are critical points of $G_2$. 

Let $\{z_1, z_2\}\in \cup_{j=1}^3Y_j$, i.e., $z_2=z_1-\frac{\omega_\ell}{2}$ for some $\ell\in \{1,2,3\}$ on $E_\tau$. 
Notice that  $\eta$ is $\mathbb{R}$-linear and $\eta(\frac{\omega_j}{2})=\frac{1}{2}\eta_j=\zeta(\frac{\omega_j}{2})$, combining with (\ref{ccccc16}), we obtain that (\ref{critical-2}) is equivalent to
\begin{equation}\label{critical-4}
\frac{1}{2}\sum_{j=0}^3\zeta(z_1-\frac{\omega_j}{2})-\eta(2z_1)+\frac{1}{2}\eta_3=0.
\end{equation}
By  (\ref{ccccc16}), we obtain that (\ref{critical-4}) is equivalent to 
\begin{equation}\label{critical-3}
-4\pi\frac{\partial{G}}{\partial z}(2z_1)=\zeta(2z_1)-\eta(2z_1)=0,
\end{equation}
i.e., $2z_1$ with $z_1\in E_\tau\setminus E_\tau[2]$ is a critical point of $G(z;\tau)$. 
Therefore, the pair $\{z_1, z_2\}\in \cup_{j=1}^3Y_j$ is a critical point of $G_2$ if and only if  $2z_1$  is a critical point of $G$. 
\end{proof}

By Theorem  \ref{lem-G2-2}, a critical point $\{z_1, z_2\}\in Y$ of $G_2$ is called \emph{special}  if $\{z_1, z_2\}\in Y_0$. In particular, the two points in $Y_0\cap Y_\ell$ are special critical points by  (\ref{Y0Yell}).
In order to classify the critical points in $Y_\ell$ with $\ell\in\{1,2,3\}$, we define the following map 
\begin{equation*}
\begin{aligned}
T^{(\ell)}: Y_\ell &\to \mathbb{C}\\
\{z_1, z_2\}&\mapsto T^{(\ell)}(\{z_1, z_2\}):=\zeta(z_1)+\zeta(z_1-\frac{\omega_\ell}{2})-\zeta(2z_1)+\frac{1}{2}\eta_\ell
\end{aligned},
\end{equation*}
which is clearly well-defined. This map originates from the correspondence between $Y_\ell$ and the apparent space in \cite{FL2025}. 
%
Notice that $(T^{(\ell)})^{-1}(0)=Y_0\cap Y_\ell$, if we define $T^{(0)}: Y_0\to \mathbb{C}$ by $T^{(0)}(Y_0)\equiv 0$, then we obtain a well-defined map $\mathcal{T}$ on $Y$ satisfying $\mathcal{T}|_{Y_\ell}=T^{(\ell)}, \ell=0,1,2,3$.
We have the following conclusion about this map. 
\begin{Theorem}\cite[Theorem 4.8]{FL2025}\label{mapT}
Let $\ell\in \{1,2,3\}$ and $\{j,k,\ell\}=\{1,2,3\}$. The map $T^{(\ell)}$
 is a double covering map with four ramification points 
$$\left\{ \left\{\pm\frac{\omega_j}{4}, \frac{\omega_j}{4}-\frac{\omega_\ell}{2} \right\},  \left\{\pm\frac{\omega_k}{4}, \frac{\omega_k}{4}-\frac{\omega_\ell}{2} \right\}\right\}$$ on $Y_\ell$. Moreover, the corresponding four branch points are exactly four zeros of $Q_\ell(T)$. More precisely,
$$\left(T^{(\ell)}\left(\left\{\pm\frac{\omega_i}{4}, \frac{\omega_i}{4}-\frac{\omega_\ell}{2} \right\}\right)\right)^2=e_i-e_\ell\quad i=j, k.$$
\end{Theorem}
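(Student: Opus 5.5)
We identify $Y_\ell$ with a punctured torus and show that $T^{(\ell)}$ extends to a degree-two meromorphic function on it. The ramification points then come from an explicit deck involution, and the branch values from an explicit evaluation.

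\textbf{Step 1: $Y_\ell$ as a punctured torus.} A pair $\{z_1,z_2\}\in Y_\ell$ satisfies $z_2=z_1-\frac{\omega_\ell}{2}$. Since $\omega_\ell\in\Lambda_\tau$, this also gives $z_1=z_2-\frac{\omega_\ell}{2}$, so the map $z_1\mapsto\{z_1,z_1-\frac{\omega_\ell}{2}\}$ identifies $Y_\ell$ with
\[
E'_\ell\setminus\{[0],[\tfrac{\omega_j}{2}]\},\qquad E'_\ell:=\mathbb{C}/\Lambda'_\ell,\quad \Lambda'_\ell:=\Lambda_\tau+\mathbb{Z}\tfrac{\omega_\ell}{2}.
\]
Here the four half periods collapse to two classes in $E'_\ell$.

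\textbf{Step 2: invariance and poles.} Set $F(z):=\zeta(z)+\zeta(z-\frac{\omega_\ell}{2})-\zeta(2z)+\frac12\eta_\ell$. We check that $F(z+\frac{\omega_\ell}{2})=F(z)$, using $\zeta(z+\frac{\omega_\ell}{2})=\zeta(z-\frac{\omega_\ell}{2})+\eta_\ell$ and $\zeta(2z+\omega_\ell)=\zeta(2z)+\eta_\ell$. Periodicity under $\Lambda_\tau$ follows in the same way, so $F$ descends to a meromorphic function on $E'_\ell$.

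Its only possible poles are at the half periods. At $z=0$ (which is identified with $\frac{\omega_\ell}{2}$) the residue is $1-\frac12=\frac12$. At $z=\frac{\omega_j}{2}$ (identified with $\frac{\omega_k}{2}$) only $-\zeta(2z)$ is singular, with residue $-\frac12$. Hence $F$ has exactly two simple poles on $E'_\ell$, and these are precisely the punctures.

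Consequently $F:E'_\ell\to\mathbb{P}^1$ has degree $2$ and $F^{-1}(\infty)$ equals the punctured set. So $T^{(\ell)}:Y_\ell\to\mathbb{C}$ is a proper double covering. Because both poles are simple, $\infty$ is not a branch value. Riemann–Hurwitz for a degree-two map from genus $1$ to $\mathbb{P}^1$ then gives exactly four ramification points, all lying in $Y_\ell$.

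\textbf{Step 3: locating the ramification points.} We exhibit the deck involution explicitly. Following the symmetry in Lemma \ref{lem-even}, we expect it to be $\iota(z)=\frac{\omega_j}{2}-z$, and we verify $F(\frac{\omega_j}{2}-z)=F(z)$ from oddness of $\zeta$ and the quasi-periodicity relations. The fixed points of $\iota$ on $E'_\ell$ solve $2z\equiv\frac{\omega_j}{2}$ modulo $\Lambda'_\ell$, which gives four classes, namely $z\equiv \frac{\omega_j}{4}$ and $z\equiv \frac{\omega_k}{4}$ up to $\Lambda'_\ell$ and sign. The corresponding pairs are $\{\pm\frac{\omega_j}{4},\frac{\omega_j}{4}-\frac{\omega_\ell}{2}\}$ and $\{\pm\frac{\omega_k}{4},\frac{\omega_k}{4}-\frac{\omega_\ell}{2}\}$. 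Since these four points are fixed by the nontrivial deck transformation, they are exactly the four ramification points.

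\textbf{Step 4: the branch values (main obstacle).} It remains to show $F(\frac{\omega_i}{4})^2=e_i-e_\ell$ for $i=j,k$. Write $u=\frac{\omega_i}{4}$. We would use the duplication formula
\[
\zeta(2u)=2\zeta(u)+\frac{\wp''(u)}{2\wp'(u)}
\]
together with
\[
\zeta(u-\tfrac{\omega_\ell}{2})=\zeta(u)-\tfrac12\eta_\ell+\frac12\,\frac{\wp'(u)}{\wp(u)-e_\ell}.
\]
These reduce $F(u)$ to a rational expression in $\wp(u)$ and $\wp'(u)$. Since $2u=\frac{\omega_i}{2}$, we have $\wp(u)=e_i\pm\sqrt{(e_i-e_j')(e_i-e_k')}$, where $\{i,j',k'\}=\{1,2,3\}$. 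Substituting, and using $\wp'^2=4\prod(\wp-e_m)$ and $\wp''=6\wp^2-\frac{g_2}{2}$, should simplify the square to $e_i-e_\ell$. This algebraic simplification is the step we expect to be hardest.

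As a fallback, we would argue more structurally. The branch values of $T^{(\ell)}$ are the values at which the two preimages coalesce, and via the correspondence of $Y_\ell$ with $V_\ell$ these are exactly the values where $\mathcal{L}_\ell(T)$ fails to be completely reducible. By the criterion stated before Lemma \ref{lem-unit}, these are the zeros of $Q_\ell(T)$, namely $T^2=e_i-e_\ell$.
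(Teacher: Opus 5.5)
Your argument is correct, and Step 4, which you left open, closes in a few lines. The paper does not prove this statement; it quotes it from \cite[Theorem 4.8]{FL2025}. The only ingredient it records, the fiber formula (\ref{preimage}), is exactly your deck involution $\iota(z)=\frac{\omega_j}{2}-z$. So your route is consistent with the paper and self-contained: descend $F$ to $E'_\ell=\mathbb{C}/(\Lambda_\tau+\mathbb{Z}\frac{\omega_\ell}{2})$, find two simple poles with residues $\pm\frac12$ at the two punctures, apply Riemann--Hurwitz, and take the fixed points of $\iota$.

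One justification needs adjusting. The identity $F\circ\iota=F$ does not follow from oddness and quasi-periodicity of $\zeta$ alone, because $\zeta(2z)$ enters $F(z)$ and $F(\iota(z))$ with opposite signs. You can fix this in either of two ways:
\begin{itemize}
\item invoke (\ref{ccccc16});
\item observe that $F\circ\iota$ has the same poles and residues as $F$, since $\iota$ swaps $[0]$ and $[\frac{\omega_j}{2}]$ and reverses the local coordinate. Then $F-F\circ\iota$ is holomorphic on $E'_\ell$, hence constant, and it vanishes at the fixed point $\frac{\omega_j}{4}$.
\end{itemize}

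Drop the fallback in Step 4. It asserts, without proof, that the branch values of $T^{(\ell)}$ are exactly the parameters where $\mathcal{L}_\ell(T)$ fails to be completely reducible, and that identification is not established in this paper. The direct computation suffices. Your two identities give $F=\frac{\wp'}{2(\wp-e_\ell)}-\frac{\wp''}{2\wp'}$. Since $\wp''=\frac{\wp'^2}{2}\sum_{m=1}^3\frac{1}{\wp-e_m}$, this becomes
\begin{equation*}
F(z)=\frac{\wp'(z)}{4}\Big(\frac{1}{\wp(z)-e_\ell}-\frac{1}{\wp(z)-e_j}-\frac{1}{\wp(z)-e_k}\Big).
\end{equation*}
Fix $i\in\{j,k\}$ and let $i'$ be the other index. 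Put $u=\frac{\omega_i}{4}$, $d=\wp(u)-e_i$, $p=e_i-e_{i'}$ and $q=e_i-e_\ell$. Because $u-\frac{\omega_i}{2}=-u$, the identity $\wp(z-\frac{\omega_i}{2})-e_i=\frac{pq}{\wp(z)-e_i}$ gives $d^2=pq$, so you never need to choose a square root.

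Now write $A=d$, $B=d+p$, $C=d+q$, so that $\wp'(u)^2=4ABC$. Then $F(u)^2=\frac{(AB-BC-CA)^2}{4ABC}$, and $d^2=pq$ yields
\begin{equation*}
AB-BC-CA=-2q(d+p),\qquad ABC=pq(p+q+2d),\qquad (d+p)^2=p(p+q+2d).
\end{equation*}
Hence $F(u)^2=q=e_i-e_\ell$. Note that $p+q+2d\neq 0$, because $\wp'(u)\neq 0$.

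Finally, $F$ is odd, so $F(-u)=-F(u)$. The four branch values are therefore $\pm\sqrt{e_j-e_\ell}$ and $\pm\sqrt{e_k-e_\ell}$. These are exactly the four distinct zeros of $Q_\ell$, which is the remaining claim of the statement.
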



With the double covering maps, we call a critical point $\{z_1, z_2\}\in Y$ of $G_2$ is \emph{trivial} if $\{z_1, z_2\}$ is a ramification point of $T^{(\ell)}$ for some $\ell\in \{1,2,3\}$ and all other critical points are called \emph{nontrivial}. 
Equivalently, a critical point $\{z_1, z_2\}\in Y$ of $G_2$ is \emph{trivial} if $T^{(\ell)}(\{z_1, z_2\})$ is a zero of $Q_\ell(T)$ and \emph{nontrivial} otherwise. 
Notice that  a critical point $\{z_1, z_2\}\in Y$ of $G_2$ is  special  if $\{z_1, z_2\}\in Y_0=\mathcal{T}^{-1}(0)$ and $Q_\ell(0)\neq 0$ for all $\ell\in \{1,2,3\}$, then all special critical points of $G_2$ are nontrivial, we call the rest nontrivial critical points of $G_2$ are \emph{nonspecial}, i.e., the nontrivial critical points lying in $Y\setminus Y_0$. 
By a direct computation (see the proof of \cite[Theorem 4.8]{FL2025} for details), the fiber over $T^{(\ell)}(\{z_1, z_2\})$ is 
\begin{equation}\label{preimage}
(T^{(\ell)})^{-1}\left(T^{(\ell)}(\{z_1, z_2\})\right)=\left\{\{z_1, z_2\}, \{\frac{\omega_j}{2}-z_1,\frac{\omega_j}{2}-z_2\}\right\}.
\end{equation}
By noticing that $G(z;\tau)$ is even, we obtain that $\{z_1, z_2\}$ is a  critical point of $G_2$ if and only if $\{\frac{\omega_j}{2}-z_1,\frac{\omega_j}{2}-z_2\}$ is a critical point of $G_2$.
Therefore, given two nonspecial critical points $\{z_1, z_2\}, \{\hat z_1,\hat  z_2\}\in Y_\ell$, they are called equivalent if
$T^{(\ell)}(\{ z_1, z_2\})=T^{(\ell)}(\{\hat z_1,\hat  z_2\})$, and the equivalent class of $\{z_1, z_2\}$ is denoted by $[\{z_1, z_2\}]$. Since $T^{(\ell)}$ is a branched double cover, each equivalent class consists of  two elements.

Notice that $G(z;\tau)$ is even and doubly periodic, it always has three trivial critical points $\frac{\omega_j}{2}, \,j=1,2,3$. 
 It was proved
 in \cite{LW2010} that $G(z;\tau)$ has at most one pair $\{\theta,-\theta\}$ of nontrivial critical points depending on the choice of $\tau$, i.e., $G(z;\tau)$ has either $3$ or $5$ critical points. Recall the definition of $\mathcal{E}$ in (\ref{def-E}), the critical points of $G_2(z_1,z_2;\tau)$ are completely determined in the following theorem. 

\begin{theorem}\label{thm-critical-G2}
Let $\ell\in \{1,2,3\}$. The critical points of $G_2$ in $Y_\ell$ can be classified as follows.  
\begin{enumerate}
\item  $G_2$ has two special critical points which are the two points in $Y_0\cap Y_\ell$.
\item  $G_2$ has four trivial critical points
which are the four ramification points of $T^{(\ell)}$.
\item If $\tau\in \mathcal{E}$, then $G_2$ has exactly two equivalent classes $[\{\pm \frac{\theta}{2}, \pm \frac{\theta}{2}-\frac{\omega_\ell}{2}\}]$ of nonspecial critical points.
\item If $\tau\notin \mathcal{E}$, then $G_2$ has no nonspecial critical points. 
\end{enumerate}
\end{theorem}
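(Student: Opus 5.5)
By Theorem \ref{lem-G2-2}, a pair $\{z_1,z_2\}\in Y_\ell$, with $z_2=z_1-\frac{\omega_\ell}{2}$, is a critical point of $G_2$ if and only if $2z_1$ is a critical point of $G$. The plan is to list the critical points of $G$ and pull them back under $z_1\mapsto 2z_1$. We need $z_1\notin E_\tau[2]$ and $z_2\notin E_\tau[2]$; since $z_2-z_1$ is a half period, the second condition is the same as the first. We also need $z_1\ne z_2$, which is automatic. The critical points of $G$ are the three half periods $\frac{\omega_i}{2}$, $i=1,2,3$, together with $\pm\theta$ when $\tau\in\mathcal{E}$ (\cite{LW2010}). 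The point $0$ is excluded because it is the singularity of $G$.

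\textbf{Half-period solutions.} If $2z_1=\frac{\omega_i}{2}$, then $z_1=\frac{\omega_i}{4}+\frac{\omega_m}{2}$ for some $m\in\{0,1,2,3\}$. None of these lies in $E_\tau[2]$. The pair $\{z_1,z_1-\frac{\omega_\ell}{2}\}$ depends only on $z_1$ modulo $\frac{\omega_\ell}{2}$, so for each $i$ we get two distinct points of $Y_\ell$. This gives six critical points in all.

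For $i=\ell$, the two points are $\{\frac{\omega_\ell}{4},-\frac{\omega_\ell}{4}\}$ and $\{\frac{\omega_\ell}{4}+\frac{\omega_j}{2},-\frac{\omega_\ell}{4}+\frac{\omega_j}{2}\}$, using $\frac{\omega_\ell}{4}-\frac{\omega_\ell}{2}=-\frac{\omega_\ell}{4}$. These are exactly $Y_0\cap Y_\ell$ by (\ref{Y0Yell}), so they are the special critical points, which proves (1).

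For $i\in\{j,k\}$, the two points are $\{\pm\frac{\omega_i}{4},\pm\frac{\omega_i}{4}-\frac{\omega_\ell}{2}\}$. These are the four ramification points of $T^{(\ell)}$ listed in Theorem \ref{mapT}, so they are trivial, which proves (2). One should check that the two classes do not overlap. This follows because $T^{(\ell)}$ vanishes on $Y_0\cap Y_\ell$, while at the ramification points $(T^{(\ell)})^2=e_i-e_\ell\neq0$.

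\textbf{Nontrivial solutions.} If $\tau\notin\mathcal{E}$, there are no further critical points, which gives (4). If $\tau\in\mathcal{E}$, we solve $2z_1=\pm\theta$, so $z_1=\pm\frac{\theta}{2}+\frac{\omega_m}{2}$.

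First, $z_1\notin E_\tau[2]$: otherwise $\theta=2z_1=0$ in $E_\tau$, which is impossible. Modulo $\frac{\omega_\ell}{2}$ this yields four points of $Y_\ell$: $\{\pm\frac{\theta}{2},\pm\frac{\theta}{2}-\frac{\omega_\ell}{2}\}$ and $\{\pm\frac{\theta}{2}+\frac{\omega_j}{2},\pm\frac{\theta}{2}+\frac{\omega_j}{2}-\frac{\omega_\ell}{2}\}$.

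Second, none of them lies in $Y_0$. If one did, then $2z_1-\frac{\omega_\ell}{2}=0$, so $\theta=\pm\frac{\omega_\ell}{2}$ in $E_\tau$, contradicting that $\theta$ is not a half period. Hence all four are nonspecial.

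Finally, group them into classes using (\ref{preimage}). The partner of $\{z_1,z_2\}$ is $\{\frac{\omega_j}{2}-z_1,\frac{\omega_j}{2}-z_2\}$. Applied to $z_1=\frac{\theta}{2}$, this gives $\{-\frac{\theta}{2}+\frac{\omega_j}{2},\,-\frac{\theta}{2}+\frac{\omega_j}{2}+\frac{\omega_\ell}{2}\}$, which is one of the four points found above. So the four points split into two equivalence classes, namely $[\{\frac{\theta}{2},\frac{\theta}{2}-\frac{\omega_\ell}{2}\}]$ and $[\{-\frac{\theta}{2},-\frac{\theta}{2}-\frac{\omega_\ell}{2}\}]$.

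It remains to show these two classes are distinct. If they coincided, then $\{-\frac{\theta}{2},-\frac{\theta}{2}-\frac{\omega_\ell}{2}\}$ would equal either the pair itself or its partner. Both cases force $\theta$ to be a half period, a contradiction. This proves (3).

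The main bookkeeping obstacle is exactly this last step: keeping track of equality of unordered pairs modulo the lattice, and checking the nondegenerate-coincidence cases in the bijection between the solutions $z_1$ modulo $\frac{\omega_\ell}{2}$ and the points of $Y_\ell$.
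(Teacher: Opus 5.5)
Your proposal is correct and follows the paper's proof. You use Theorem \ref{lem-G2-2} to reduce to $2z_1$ being a critical point of $G$, split into the cases $2z_1=\frac{\omega_\ell}{2}$, $2z_1=\frac{\omega_t}{2}$ ($t\neq\ell$) and $2z_1=\pm\theta$, and pair the last four points via (\ref{preimage}). Your extra distinctness checks add useful care, but one slip: coinciding with the partner $\{\frac{\omega_j}{2}-\frac{\theta}{2},\frac{\omega_k}{2}-\frac{\theta}{2}\}$ does not force $\theta$ to be a half period; it is impossible outright, since it would give $\{0,\frac{\omega_\ell}{2}\}=\{\frac{\omega_j}{2},\frac{\omega_k}{2}\}$, so your conclusion stands.
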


\begin{proof}
Note that (1) is true by the definition, we only need to show (2),(3),(4). 

Let $\{z_1, z_2\}\in Y_\ell$. By Theorem \ref{lem-G2-2}, $\{z_1, z_2\}$ is a critical point of $G_2(z)$ if and only if $2z_1$ is a critical point of $G(z)$. 
%

 If $2z_1=\frac{\omega_{\ell}}{2}$, which is equivalent to  $z_2=-z_1$, 
then $\{z_1, z_2\}\in Y_0\cap Y_\ell$, thus we get two special critical points in this case. 

 If $2z_1=\frac{\omega_{t}}{2}$ with $t\in \{1,2,3\}\setminus \{\ell\}$, then 
$$\{z_1, z_2\}= \{\frac{\omega_t}{4}, \frac{\omega_t}{4}-\frac{\omega_\ell}{2}\}\quad \text{or} \quad  \{-\frac{\omega_t}{4}, -\frac{\omega_t}{4}-\frac{\omega_\ell}{2}\},$$
which are exactly the four ramification points. Hence, we get four trivial critical points, which are exactly the four ramification points, in this case.


If $2z_1\in \{\theta, -\theta\}$ be a nontrivial critical point of $G$, then 
\begin{equation}\label{nontrivial-1}
\{z_1, z_2\}\in \left\{ \{\pm\frac{\theta}{2}, \pm\frac{\theta}{2}-\frac{\omega_\ell}{2}\},\, \{\frac{\omega_j}{2}\mp\frac{\theta}{2}, \frac{\omega_k}{2}\mp\frac{\theta}{2}\}\right\}.
\end{equation}
In particular, $ \{\pm\frac{\theta}{2}, \pm\frac{\theta}{2}-\frac{\omega_\ell}{2}\}$ are equivalent to $\{\frac{\omega_j}{2}\mp\frac{\theta}{2}, \frac{\omega_k}{2}\mp\frac{\theta}{2}\}$, respectively.  Therefore,  $G_2$ has exactly two equivalent classes $[\{\pm \frac{\theta}{2}, \pm \frac{\theta}{2}-\frac{\omega_\ell}{2}\}]$ of nontrivial critical points if $\tau\in \mathcal{E}$, otherwise, $G_2$ has no nontrivial critical points.
\end{proof}
\section{Critical points and the curvature equation}\label{sec-degeneracy}

In this section, we want to show that for any nonspecial critical point  of $G_2$, we can generate a one-parameter family of solutions. 
We first recall the following result about the monodromy data $(s,r)$. 

\begin{Theorem}\cite{FL2025}\label{thm-intro-data}
Let $\ell\in \{1,2,3\}$ and $T\in V_\ell$.  Then  $\mathcal{L}_\ell(T)$ 
is completely reducible if and only if $Q_\ell(T)\neq 0$.  Moreover, up to a sign,  the monodromy data $(s,r)\mod \mathbb{Z}^2$ 
is determined by  
\begin{equation}\label{eqn-intro-sr}
\left\{
\begin{aligned}
r+s\tau&=2a-\frac{1}{2}\omega_\ell-\frac{1}{2}\omega_3\\
r\eta_1+s\eta_2&=\zeta(2a)-\frac{1}{2}\eta_\ell-\frac{1}{2}\eta_3
\end{aligned}\right.,
\end{equation}
where $\{a,a-\frac{\omega_\ell}{2}\}\in (T^{(\ell)})^{-1}(T)$.
\end{Theorem}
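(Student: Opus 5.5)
The plan is to construct an explicit basis of solutions of $\mathcal{L}_\ell(T)$ of Hermite--Halphen type, diagonal for the monodromy, and read $(s,r)$ off the quasi-periodicity of $\sigma$. The model is the case $T=0$ recalled before Theorem \ref{cor-special}, where solutions are $e^{-\frac12\eta_3 z}\sigma(z-a)\sigma(z+a)/(\prod_j\sigma(z-\frac{\omega_j}{2}))^{1/2}$.

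\textbf{Ansatz and determination of $c$.} For $\{a,a-\frac{\omega_\ell}{2}\}\in Y_\ell$ I would set
\begin{equation*}
y_a(z)=e^{c z}\frac{\sigma(z-a)\,\sigma(z-a+\frac{\omega_\ell}{2})}{\left(\prod_{j=0}^3\sigma(z-\frac{\omega_j}{2})\right)^{1/2}},
\end{equation*}
with $c=c(a)\in\mathbb{C}$ to be fixed. The key computation is $y_a''/y_a$. Writing $(\log y_a)'=c+\zeta(z-a)+\zeta(z-a+\frac{\omega_\ell}{2})-\frac12\sum_j\zeta(z-\frac{\omega_j}{2})$, the quantity $y_a''/y_a=(\log y_a)''+((\log y_a)')^2$ is elliptic once $c$ is chosen so that the apparent poles at $z=a$ and $z=a-\frac{\omega_\ell}{2}$ cancel. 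Cancelling the residue at $z=a$ is one linear equation for $c$. Using (\ref{ccccc16}) and the addition formula $\zeta(u+v)-\zeta(u)-\zeta(v)=\frac12\frac{\wp'(u)-\wp'(v)}{\wp(u)-\wp(v)}$, I expect this to give $c=-\zeta(2a)+\frac12\eta_\ell+\frac12\eta_3$, up to the normalizing constants. The residue at $a-\frac{\omega_\ell}{2}$ should then vanish automatically by the half-period symmetry of Lemma \ref{lem-even}(1).

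\textbf{Matching the potential.} Next I compare principal parts at $\frac{\omega_k}{2}$ with $q_\ell(z;T)$. The $\frac34\wp$ terms come for free from the exponents $-\frac12$. The $\zeta$-coefficients are $\pm$ (a common quantity), and I would check that this quantity equals $T^{(\ell)}(\{a,a-\frac{\omega_\ell}{2}\})=\zeta(a)+\zeta(a-\frac{\omega_\ell}{2})-\zeta(2a)+\frac12\eta_\ell$, with the sign pattern $\varepsilon^{(\ell)}$. The constant term must be $T^2-\eta_{3-\ell}T$, and this is automatic once the pole data match, because the apparent conditions (\ref{cond-AP}) fix $B$. This step is where I expect the main obstacle: it needs the same careful elliptic-function bookkeeping as the proof of Theorem \ref{mapT} in \cite{FL2025}.

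\textbf{Independence and monodromy.} The second point of the fiber (\ref{preimage}), $\{\frac{\omega_j}{2}-a,\frac{\omega_k}{2}-a\}$, gives a second solution $y_{\hat a}$. The Wronskian of $y_a$ and $y_{\hat a}$ is a constant, and it vanishes exactly when the two fiber points coincide, i.e. at ramification points. By Theorem \ref{mapT} these are the zeros of $Q_\ell$, so for $Q_\ell(T)\neq0$ we get a basis of eigenfunctions and complete reducibility.

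Conversely, if $Q_\ell(T)=0$ only $y_a$ survives. A second solution then carries a linear term in $z$, by the standard reduction of order $y_a\int y_a^{-2}$, whose period along $\omega_i$ is a nonzero multiple of the first. This gives the non-diagonalizable form (\ref{CompNRed}).

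\textbf{Reading off $(s,r)$.} Finally, using $\sigma(z+\omega_i)=-e^{\eta_i(z+\omega_i/2)}\sigma(z)$, the $z$-dependence cancels, since there are two $\sigma$'s upstairs and four square-rooted ones downstairs. This leaves $y_a(z+\omega_i)=\pm\exp\bigl(c\,\omega_i-\eta_i(2a-\frac{\omega_\ell}{2}-\frac{\omega_3}{2})\bigr)y_a(z)$, up to the half-period shifts. Setting these multipliers equal to $e^{-2\pi i s}$ and $e^{2\pi i r}$, and solving with the Legendre relation $\eta_1\tau-\eta_2=2\pi i$, yields exactly the system (\ref{eqn-intro-sr}) modulo $\mathbb{Z}^2$. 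The choice of fiber point, $a$ versus $\hat a$, corresponds to swapping eigenfunctions, which is the ``up to a sign'' ambiguity.
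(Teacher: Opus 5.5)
The paper gives no proof of this statement; it is quoted from \cite{FL2025}. Your route (eigenfunctions of Hermite--Halphen type indexed by the fiber of $T^{(\ell)}$, with the monodromy read off from the quasi-periodicity of $\sigma$) is the natural one. It is consistent with the $T=0$ solutions the paper quotes from \cite{FL2025}. Your multiplier formula is correct. So is the independence argument: proportional solutions have the same zeros, so $y_a,y_{\hat a}$ are independent away from ramification points. Your explanation of the sign ambiguity is also correct. Two points need repair.

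\textbf{The sign of $c$.} Your predicted $c$ has the wrong sign. Cancelling the simple pole of $y_a''/y_a$ at $z=a$ requires $c+\zeta(\frac{\omega_\ell}{2})-\frac12\sum_{j=0}^3\zeta(a-\frac{\omega_j}{2})=0$. By (\ref{ccccc16}) this gives $c=\zeta(2a)-\frac12\eta_\ell-\frac12\eta_3$. As a check, the same computation with zeros $\pm a$ gives $c=-\frac12\eta_3$, matching the quoted $T=0$ solutions. With your sign the pole at $a$ survives. Your last step would also put $-\zeta(2a)$ into the second line of (\ref{eqn-intro-sr}) while leaving the first line unchanged. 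With the correct $c$ that line reads literally $r\eta_1+s\eta_2=c$, and the residue of $y_a''/y_a$ at $0$ is exactly $T^{(\ell)}(\{a,a-\frac{\omega_\ell}{2}\})$.

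The cancellation at $a-\frac{\omega_\ell}{2}$ does not follow from Lemma \ref{lem-even}. That lemma concerns $q_\ell$, which has not yet been identified with $y_a''/y_a$. For zeros $z_1,z_2$ the two residue conditions are compatible exactly when (\ref{ccccc17}) holds, i.e.\ $\{z_1,z_2\}\in Y$. On $Y_\ell$ this follows directly from (\ref{ccccc16}) and $\zeta(2a-\omega_\ell)=\zeta(2a)-\eta_\ell$. Finally, to invoke (\ref{cond-AP}) for the constant term, you need that $y''=(y_a''/y_a)y$ is apparent. It is, because $y_a$ is a log-free solution with exponent $-\frac12$ at every half period.

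\textbf{The real gap: the converse direction.} The implication $Q_\ell(T)=0\Rightarrow$ not completely reducible is asserted, not proved. At a ramification point $2a\in\frac12\Lambda_\tau$, so $\zeta(2a)=\eta(2a)$ and (\ref{eqn-intro-sr}) gives $(s,r)\in(\frac12\mathbb{Z})^2$. The multipliers of $y_a$ are therefore $\pm1$. Complete reducibility would then mean $M_1,M_2\in\{\pm I\}$, i.e.\ every solution is a common eigenfunction. ``Only $y_a$ survives'' presupposes that every common eigenfunction has your ansatz form. ``A nonzero multiple'' is precisely what must be shown.

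One way to close it is to classify common eigenfunctions. For any common eigenfunction $y$, $y^2$ is meromorphic with constant multipliers, so its zeros and poles balance. It has at most four simple poles. An exponent $\frac32$ at a half period contributes a triple zero, and zeros of $y$ at regular points are simple. Hence there are only two possibilities:
\begin{itemize}
\item $y$ has exponent $\frac32$ at exactly one half period and no other zeros. A residue computation then gives $\mathbb{T}=(3\beta,-\beta,-\beta,-\beta)$ up to order, which lies in $AP$ only if $\beta=0$, i.e.\ $T=0$.
\item $y=e^{cz}\sigma(z-z_1)\sigma(z-z_2)/(\prod_j\sigma(z-\frac{\omega_j}{2}))^{1/2}$ with $z_1\neq z_2$. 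The residue conditions force $\{z_1,z_2\}\in Y$. Comparing residues with $q_\ell(z;T)$ for $T\neq0$ then gives $\{z_1,z_2\}\in(T^{(\ell)})^{-1}(T)$.
\end{itemize}
Since $Q_\ell(0)\neq0$, every branch value is nonzero. Over a branch value there is therefore a single common eigenline, and the monodromy cannot be simultaneously diagonalized.

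Alternatively, compute directly. At a ramification point, $y_a^{-2}=\kappa_1\wp(z-a)+\kappa_2\wp(z-a+\frac{\omega_\ell}{2})+\kappa_0$, with no residues because $a$ and $a-\frac{\omega_\ell}{2}$ are regular points. The periods of $\int y_a^{-2}$ are $\kappa_0\omega_i-(\kappa_1+\kappa_2)\eta_i$. By Legendre's relation, both vanish only if $\kappa_0=0$ and $\kappa_1=-\kappa_2$. The zero set of $\wp(z-a)-\wp(z-a+\frac{\omega_\ell}{2})$ is $a-\frac{\omega_\ell}{4}+\frac12\Lambda_\tau$. This would then have to equal $E_\tau[2]$, forcing $\{a,a-\frac{\omega_\ell}{2}\}\in Y_0\cap Y_\ell$, i.e.\ $T=0$, a contradiction.
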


\begin{lemma}\label{lem-nontrivial}
Let $T\in V_\ell\setminus\{0\}$, then $\mathcal{L}_\ell(T)$ is unitarizable if and only if $(T^{(\ell)})^{-1}(T)$ is an equivalent class of nonspecial critical points of $G_2$. 
\end{lemma}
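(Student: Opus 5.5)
The plan is to turn unitarizability into a condition on the monodromy data through Theorem \ref{thm-intro-data}, and then to match that condition with the critical point equation (\ref{critical-3}) of Theorem \ref{lem-G2-2}.

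First, unitarizability forces complete reducibility. If $Q_\ell(T)=0$, the monodromy is not completely reducible and so not unitarizable. On the other side, $(T^{(\ell)})^{-1}(T)$ is then a ramification point, which is a trivial critical point or not a critical point at all, hence not a nonspecial class. So both sides of the lemma fail, and we may assume $Q_\ell(T)\neq 0$. Also $T\neq 0$, so $(T^{(\ell)})^{-1}(T)$ lies in $Y_\ell\setminus Y_0$. By (\ref{preimage}), the fiber consists of exactly two points $\{a,a-\frac{\omega_\ell}{2}\}$ and $\{\frac{\omega_j}{2}-a,\frac{\omega_k}{2}-a\}$. Either both are critical points of $G_2$ or neither is, by the evenness remark after (\ref{preimage}). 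Therefore the right-hand side of the lemma is equivalent to: $\{a,a-\frac{\omega_\ell}{2}\}$ is a critical point of $G_2$. By Theorem \ref{lem-G2-2}(2), this in turn is equivalent to $\zeta(2a)=\eta(2a)$.

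Next I translate unitarizability. In the completely reducible case, $\mathcal{L}_\ell(T)$ is unitarizable if and only if $(s,r)\in\mathbb{R}^2$; this is the remark before Theorem \ref{lem-gen-u}. I then use the $\mathbb{R}$-linearity of $\eta$, with $\eta(1)=\eta_1$ and $\eta(\tau)=\eta_2$. Apply $\eta$ to the first equation of (\ref{eqn-intro-sr}); for real $(r,s)$ this is legitimate. Using $\eta(\frac{1}{2}\omega_\ell)=\frac{1}{2}\eta_\ell$ and $\eta(\frac{1}{2}\omega_3)=\frac12\eta_3$, we get
$$r\eta_1+s\eta_2=\eta(2a)-\tfrac12\eta_\ell-\tfrac12\eta_3.$$
Comparing with the second equation, real $(s,r)$ forces $\zeta(2a)=\eta(2a)$. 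This proves the direction from unitarizable to critical point.

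For the converse, assume $\zeta(2a)=\eta(2a)$. Write $2a-\frac12\omega_\ell-\frac12\omega_3=r_0+s_0\tau$ with $r_0,s_0\in\mathbb{R}$. Then $(r_0,s_0)$ satisfies both equations of (\ref{eqn-intro-sr}). The system, viewed over $\mathbb{C}$, has determinant $\eta_2-\tau\eta_1=2\pi i\neq0$ by Legendre's relation, so its solution is unique. Hence $(s,r)=(s_0,r_0)$ is real, and $\mathcal{L}_\ell(T)$ is unitarizable.

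The main point needing care is that Theorem \ref{thm-intro-data} determines $(s,r)$ only up to sign and modulo $\mathbb{Z}^2$, and its right-hand sides involve $a$ only modulo lattice and half-period shifts. I will check that these ambiguities preserve reality of $(s,r)$: changing $a$ within the fiber or by periods shifts both sides compatibly, by lattice elements and the corresponding combinations of $\eta_1,\eta_2$. I will also check that real $(s,r)$ is exactly the unitarizability criterion, which is Lemma \ref{lem-unit} together with (\ref{CompRed}).
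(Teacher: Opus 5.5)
Your proposal is correct and follows the paper's proof essentially step for step. Unitarizability becomes reality of $(s,r)$ via Theorem \ref{thm-intro-data}, the $\mathbb{R}$-linearity of $\eta$ turns this into $\zeta(2a)=\eta(2a)$, and the converse uses uniqueness of the solution of (\ref{eqn-intro-sr}) from the Legendre relation. Your extra bookkeeping (the case $Q_\ell(T)=0$, the fiber being exactly the equivalence class, and the sign and lattice ambiguities) makes explicit what the paper leaves implicit. One cosmetic slip: with the paper's convention $\tau\eta_1-\eta_2=2\pi i$ the determinant is $\eta_2-\tau\eta_1=-2\pi i$, which is of course still nonzero.
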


\begin{proof}
Let $T\in V_\ell\setminus \{0\}$.  Pick $\{z_1, z_2\}\in Y_\ell$ such that $T=T^{(\ell)}(\{z_1, z_2\})$. Since $T\neq 0$, then $\{z_1, z_2\}\notin Y_0$.   

If $\mathcal{L}_\ell(T)$ is unitarizable, then $Q_\ell(T)\neq 0$. By  Theorem \ref{thm-intro-data},  the monodromy data  $(s,r)\in \mathbb{R}^2$ is determined by (\ref{eqn-intro-sr}), then 
\begin{align*}
\eta(r+s\tau)&=\eta(2z_1-\frac{1}{2}\omega_\ell-\frac{1}{2}\omega_3)=\eta(2z_1)-\frac{1}{2}\eta_\ell-\frac{1}{2}\eta_3\\
\eta(r+s\tau)&=r\eta_1+s\eta_2=\zeta(2z_1)-\frac{1}{2}\eta_\ell-\frac{1}{2}\eta_3,
\end{align*}
thus $\zeta(2z_1)=\eta(2z_1)$,  so $2z_1$ is a critical point of $G(z)$.
Therefore, $\{z_1, z_2\}$ is a nonspecial critical point of $G_2$.

On the other hand, if $\{z_1, z_2\}$ is a nonspecial critical point of $G_2$, then $Q_\ell(T)\neq 0$ and $\zeta(2z_1)=\eta(2z_1)$.  By Theorem \ref{thm-intro-data}, $\mathcal{L}_\ell(T)$ 
is completely reducible and the monodromy data $(s,r)\in \mathbb{C}^2$ is determined by (\ref{eqn-intro-sr}). Notice that $\zeta(2z_1)=\eta(2z_1)$, we have 
\begin{equation}\label{eqn-intro-sr-1}
\left\{
\begin{aligned}
r+s\tau&=2z_1-\frac{1}{2}\omega_\ell-\frac{1}{2}\omega_3,\\
r\eta_1+s\eta_2&=\eta(2z_1-\frac{1}{2}\omega_\ell-\frac{1}{2}\omega_3).
\end{aligned}\right.
\end{equation}
Since $\tau\eta_1-\eta_2=2\pi i\neq 0$ and $\eta(z)$ is $\mathbb{R}$-linearly with respect to $z$, then (\ref{eqn-intro-sr-1}) has a unique solution $(s,r)\in \mathbb{R}^2$.  
Therefore, $\mathcal{L}_\ell(T)$ is unitarizable. 
\end{proof}

Let $u$ be a nonspecial solution of  (\ref{mfe}). If $\mathcal{L}(u)=\mathcal{L}_\ell(T)$ for some $T\in V_\ell$ with $\ell\in \{1,2,3\}$, then $T\neq 0$ by Theorem \ref{cor-special} and we call $u$ is a nonspecial solution of type $\ell$. 
By Theorem \ref{lem-unit} and Lemma \ref{lem-nontrivial}, we get an one-parameter family of nonspecial solutions of (\ref{mfe}) corresponds to an equivalent class of nonspecial critical points of $G_2$, thus the following conclusion about nonspecial solutions  of  (\ref{mfe}) is obtained by Theorem \ref{thm-critical-G2}.
\begin{theorem}\label{cor-nontrivial}
\begin{enumerate}
\item If $\tau\in \mathcal{E}$, then (\ref{mfe}) has exactly two one-parameter families of nonspecial solutions of type $\ell$ for each  $\ell\in \{1,2,3\}$.
\item If $\tau\notin \mathcal{E}$, then (\ref{mfe}) has no nonspecial solutions.
\end{enumerate}
\end{theorem}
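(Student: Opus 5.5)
The plan is to combine the dictionary between nonspecial solutions and apparent parameters with the count of critical points in Theorem \ref{thm-critical-G2}.

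First, fix $\ell\in\{1,2,3\}$ and let $u$ be a nonspecial solution of type $\ell$, so that $\mathcal{L}(u)=\mathcal{L}_\ell(T)$ with $T\in V_\ell\setminus\{0\}$. Then $T\neq 0$ by Theorem \ref{cor-special}. By Lemma \ref{lem-unit}, $\mathcal{L}_\ell(T)$ is unitarizable. Lemma \ref{lem-nontrivial} then shows that $(T^{(\ell)})^{-1}(T)$ is an equivalence class of nonspecial critical points of $G_2$ in $Y_\ell$.

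Conversely, take an equivalence class $[\{z_1,z_2\}]$ of nonspecial critical points in $Y_\ell$ and set $T=T^{(\ell)}(\{z_1,z_2\})$. Since nonspecial points lie outside $Y_0$, we have $T\neq0$. Lemma \ref{lem-nontrivial} gives that $\mathcal{L}_\ell(T)$ is unitarizable, and Lemma \ref{lem-unit} then produces a solution $u$ with $\mathcal{L}(u)=\mathcal{L}_\ell(T)$. This $u$ is nonspecial by Theorem \ref{cor-special}.

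Next, Theorem \ref{lem-gen-u}(2) shows that the solutions $v$ with $\mathcal{L}(v)=\mathcal{L}_\ell(T)$ form exactly one one-parameter family. So there is a bijection between one-parameter families of nonspecial solutions of type $\ell$ and values $T\in V_\ell\setminus\{0\}$ for which $\mathcal{L}_\ell(T)$ is unitarizable. By the above, these values correspond in turn to equivalence classes of nonspecial critical points in $Y_\ell$.

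The point needing care is injectivity on both sides. Distinct families must have distinct $T$, and this holds because $\mathcal{L}(u)$ determines $T$ uniquely. We must also rule out that one $T$ lies in two different components $V_\ell$ and $V_{\ell'}$. Since $T\neq0$, the sign patterns $\varepsilon^{(\ell)}$ differ between components, so the components meet only at $T=0$ and the coefficient vector $(\mathbb{T},B)$ fixes $\ell$. On the critical-point side, the classes are exactly the fibers of $T^{(\ell)}$ by (\ref{preimage}), so distinct classes have distinct $T$.

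Finally, Theorem \ref{thm-critical-G2}(3),(4) counts the classes. For $\tau\in\mathcal{E}$ there are exactly two, namely $[\{\pm\theta/2,\pm\theta/2-\omega_\ell/2\}]$, which gives two families of type $\ell$. For $\tau\notin\mathcal{E}$ there are none. In the second case no nonspecial solutions of any type exist, since every nonspecial solution has some type $\ell$.
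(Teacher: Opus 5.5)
Your proposal is correct and follows the paper's argument. The paper also combines Lemma \ref{lem-unit}, Lemma \ref{lem-nontrivial} and Theorem \ref{lem-gen-u} to match one-parameter families of nonspecial solutions of type $\ell$ with equivalence classes of nonspecial critical points in $Y_\ell$, then reads off the count from Theorem \ref{thm-critical-G2}; your injectivity checks (the $V_\ell$ meet only at $T=0$, and classes are exactly fibers of $T^{(\ell)}$) just make explicit what the paper leaves implicit.
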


By Theorem \ref{cor-nontrivial}, we obtain Theorem \ref{thm-1.1} (3) and finish the proof of Theorem \ref{thm-1.1}.
%
In the end of this second, we consider the symmetry of solutions for (\ref{mfe}). 

\begin{theorem}\label{thm-sys}
Let $\tau\in \mathcal{E}$ and $\ell\in \{1,2,3\}$, then there is a unique solution $u_j$ such that it is symmetric with respect to $\frac{\omega_j}{4}$ for $j\neq 0, \ell$ in each  one-parameter family of non-special solutions of type $\ell$. 
\end{theorem}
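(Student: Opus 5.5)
Fix $\ell$ and $j\neq 0,\ell$, and let $u$ be a nonspecial solution of type $\ell$, so $\mathcal{L}(u)=\mathcal{L}_\ell(T)$ with $T\neq 0$ and $Q_\ell(T)\neq0$. Write the one-parameter family as $v_t$, built from a diagonalizing basis $y_1,y_2$ with unitary monodromy (\ref{CompRed}), $(s,r)\in\mathbb{R}^2\setminus(\frac12\mathbb{Z})^2$, and developing map $f=y_1/y_2$, exactly as in the proof of Theorem \ref{lem-gen-u}. The plan is to let the reflection $z\mapsto \frac{\omega_j}{2}-z$ act on the family and show it acts on the parameter $t$ as $t\mapsto c-t$ for a fixed real constant $c$. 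The unique fixed point $t=c/2$ is then the unique member symmetric with respect to $\frac{\omega_j}{4}$.

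\textbf{Step 1: the reflection preserves the family.} By Lemma \ref{lem-even}(2), $q_\ell(\frac{\omega_j}{2}-z)=q_\ell(z)$. Hence if $y$ solves $\mathcal{L}_\ell(T)$, so does $\tilde y(z):=y(\frac{\omega_j}{2}-z)$. For any $v_t$, the function $\tilde v_t(z):=v_t(\frac{\omega_j}{2}-z)$ solves (\ref{mfe}), since the singular set $E_\tau[2]$ is invariant under the reflection. Moreover $S[\tilde f](z)=S[f](\frac{\omega_j}{2}-z)$, so $\mathcal{L}(\tilde v_t)=\mathcal{L}_\ell(T)$. By Theorem \ref{lem-gen-u}(2), $\tilde v_t=v_{\phi(t)}$ for some $\phi(t)\in\mathbb{R}$.

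\textbf{Step 2: compute $\phi$.} The reflection conjugates translation by $\omega_k$ into translation by $-\omega_k$. Consequently the reflected eigenfunctions $\tilde y_1,\tilde y_2$ are eigenfunctions whose multipliers are inverted: $\tilde y_1$ has multipliers $e^{2\pi is},e^{-2\pi ir}$, up to the harmless signs $\pm1$ coming from the base point. Since $(s,r)\notin(\frac12\mathbb{Z})^2$, the eigenlines are distinct. It follows that $\tilde y_1=\alpha y_2$ and $\tilde y_2=\beta y_1$ with $\alpha,\beta\in\mathbb{C}^*$, so $\tilde f=\kappa/f$ with $\kappa=\alpha/\beta$. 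The solution generated by $e^{t}f$ is therefore mapped to the one generated by $\kappa e^{-t}/f$. Composing with $w\mapsto 1/w$, which lies in $SU(2)$ up to sign and so does not change the metric, this is the solution generated by $\bar\kappa^{-1}e^{t}f$ (up to a unimodular factor). A check of the formula for $v_t$ then gives $\phi(t)=c-t$ with $c=\log|\kappa|^{-1}$, or the analogous expression; only the fact that $\phi$ is an orientation-reversing isometry of $\mathbb{R}$ matters. The involution property $\phi\circ\phi=\mathrm{id}$ serves as a consistency check.

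\textbf{Step 3: conclude.} Since $v_{t_1}=v_{t_2}$ iff $t_1=t_2$ (Theorem \ref{lem-gen-u}), $v_t$ is symmetric about $\frac{\omega_j}{4}$ iff $\phi(t)=t$, iff $t=c/2$. This gives existence and uniqueness in each family, for each $j\neq0,\ell$ separately.

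\textbf{Main obstacle.} The delicate point is Step 2: ruling out that the reflection preserves each eigenline instead of swapping them. If it preserved them, we would get $\tilde f=\kappa f$ and $\phi(t)=t+c$, which gives either every member symmetric or none. Multiplier inversion settles this, but only after controlling the sign ambiguity $M_j(z_0')=\pm M_j(z_0)$ and the branch factors $(z-\frac{\omega_k}{2})^{-1/2}$ under the reflection. The argument needs to show that $e^{2\pi is}\neq \pm e^{-2\pi is}$ or $e^{2\pi ir}\neq\pm e^{-2\pi ir}$, which is precisely the condition $(s,r)\notin(\frac12\mathbb{Z})^2$ coming from nonspeciality. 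I would handle it by fixing the base point at $\frac{\omega_j}{4}$, which is fixed by the reflection, so that the monodromy comparison is exact.
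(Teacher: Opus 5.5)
Your proposal is correct and follows essentially the paper's argument. The paper takes the basis $y(z)$, $y(\frac{\omega_j}{2}-z)$, so that $h(\frac{\omega_j}{2}-z)=1/h(z)$ (your $\kappa=1$), and checks directly that $u_\lambda(\frac{\omega_j}{2}-z)=u_\lambda(z)$ forces $(|h|^2-1)(\lambda^2-1)=0$; you instead keep $\kappa$ general and use injectivity of $t\mapsto v_t$. Two slips need fixing: in Step 2 the reflected developing map is $\kappa e^{t}/f$ (not $\kappa e^{-t}/f$), which after $w\mapsto 1/w$ becomes $\kappa^{-1}e^{-t}f$ and gives $\phi(t)=-t-\log|\kappa|$, whereas the chain as you wrote it yields $t\mapsto t+c$; and $e^{2\pi is}\neq\pm e^{-2\pi is}$ means $s\notin\frac14\mathbb{Z}$, not $s\notin\frac12\mathbb{Z}$, so the sign ambiguity is removed by your exact base-point comparison, not by the $\pm$ condition (for $j=1,2$ you must perturb the base point slightly off $\frac{\omega_j}{4}$, since one line through it meets half periods, but the enclosed half periods come in pairs, so no sign appears).
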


\begin{proof}
Without loss of generality, we consider the one-parameter family of nonspecial solutions of type $\ell$ such that the corresponding equivalent class of nontrivial critical points of $G_2$ is the following 
$$  \left\{ \{\frac{\theta}{2}, \frac{\theta}{2}-\frac{\omega_\ell}{2}\},\, \{\frac{\omega_j}{2}-\frac{\theta}{2}, \frac{\omega_k}{2}-\frac{\theta}{2}\}\right\}.
$$
Denote by $T:=T^{(\ell)}(\{\frac{\theta}{2}, \frac{\theta}{2}-\frac{\omega_\ell}{2}\})$. 
By Lemma \ref{lem-even}, we have $q_\ell(z;T)$ has a period $\frac{\omega_\ell}{2}$ 
and 
$q_\ell(z;T)$ is symmetric
    with respect to $\frac{\omega_j}{4}$ and $\frac{\omega_k}{4}$, i.e., $q_\ell(\frac{\omega_j}{4}-z)=  q_\ell(\frac{\omega_j}{4}+z)$ and  $q_\ell(\frac{\omega_k}{4}-z)=  q_\ell(\frac{\omega_k}{4}+z)$.
Let $y(z)$ be a common eigenfunction of $\mathcal{L}_\ell(T)$, 
 that is,
$$y(z+\omega_i)=\lambda_iy(z), \quad \lambda_i\in \mathbb{C}, \, i=1,2,$$ where $\lambda_1=e^{-2\pi is}, \lambda_2=e^{2\pi i r}$ with $(s,r)\in \mathbb{R}^2\setminus (\frac{1}{2}\mathbb{Z})^2$. 
For  $\{\ell,j,k\}=\{1,2,3\}$, we obtain that 
$$\widehat y_\ell(z):=y(z+\frac{\omega_\ell}{2}), \quad  \widehat y_j(z):=y(\frac{\omega_j}{2}-z) \quad \text{and}\quad \widehat y_k(z):=y(\frac{\omega_k}{2}-z)    $$ are also solutions of  $\mathcal{L}_\ell(T)$ and satisfy 
$$\widehat y_\ell (z+\omega_i)=\lambda_i \widehat y_\ell(z),\,\, \widehat y_j(z+\omega_i)=\lambda_i^{-1} \widehat y_j(z) \,\, \text{and}\,\,\widehat y_k(z+\omega_i)=\lambda_i^{-1} \widehat y_k(z),$$ where $i=1,2$. Since $\lambda_1\neq \lambda_1^{-1}$ or $\lambda_2\neq \lambda_2^{-1}$, then $y(z+\frac{\omega_\ell}{2})$ is linearly dependent with $y(z)$, $y(\frac{\omega_j}{2}-z)$ is linearly dependent with $y(\frac{\omega_k}{2}-z)$ and $y(z), y(\frac{\omega_j}{2}-z)$ are linearly independent. 
 Let $u$ be the solution of (\ref{mfe}) with a developing map  $h(z):=\frac{y(z)}{y(\frac{\omega_j}{2}-z)}$, then 
\begin{align*}
u(z)=\log\frac{8|h'(z)|^2}{(1+|h(z)|^2)^2}
=&\log 8 +2\log\frac{|(\log h(z))'|}{|h(z)|+\frac{1}{|h(z)|}}.
\end{align*}
Let $\lambda>0$ and 
\begin{align*}
u_\lambda(z)=\log 8 +2\log\frac{|\lambda h'(z)|}{1+|\lambda h(z)|^2}=\log 8 +2\log\frac{|\lambda(\log  h(z))'|}{\lambda^2 |h(z)|+\frac{1}{| h(z)|}}.
\end{align*}By the proof of Lemma \ref{lem-gen-u}, $\{u_\lambda\}_{\lambda>0}$ is a  one-parameter family of nonspecial solutions of type $\ell$. 

Notice that    $u_\lambda(\frac{\omega_j}{2}-z)=u_\lambda(z)$ if and only if 
\begin{equation}\label{lambdau-3}
\frac{|\lambda(\log  h(z))'|}{\lambda^2 |h(z)|+\frac{1}{| h(z)|}}=\frac{|\lambda(\log  h(\frac{\omega_j}{2}-z))'|}{|\lambda^2 h(\frac{\omega_j}{2}-z)|+\frac{1}{| h(\frac{\omega_j}{2}-z)|}}.
\end{equation}
Since 
$$h(\frac{\omega_j}{2}-z)=\frac{y(\frac{\omega_j}{2}-z) }{y(z)}=\frac{1}{h(z)},$$
 then (\ref{lambdau-3}) is equivalent to 
\begin{equation}\label{lambdau-4}
\frac{|\lambda(\log  h(z))'|}{\lambda^2 |h(z)|+\frac{1}{| h(z)|}}=\frac{|\lambda(\log  h(z))'|}{\frac{\lambda^2}{|h(z)|}+|h(z)|}
\end{equation}
which is equivalent to 
$$\left(|h(z)|^2-1\right)\left(\lambda^2-1\right )=0,$$
i.e., $\lambda=1$.
 Therefore,  $u=u_1$ is the unique solution in $\{u_\lambda\}_{\lambda>0}$ such that it is symmetric with respect to $\frac{\omega_j}{4}$.  The proof for $k$ is the same. 
\end{proof}

\section{Degeneracy criterion of critical points}\label{sec-deg}
In this section, we will prove Theorem \ref{thm-intro-deg}. 
It was proved in \cite{LW2010} that 
\begin{equation*}
-4\pi \frac{\partial G}{\partial z}(z;\tau)=\zeta(z)-r\eta_1(\tau)-s\eta_2(\tau),\quad z=r+s\tau\in E_\tau, \,r, s\in \mathbb{R}. 
\end{equation*}
Denote by 
$$-r\eta_1(\tau)-s\eta_2(\tau)=az+b\overline z$$
By the Legendre relation $\tau\eta_1-\eta_2=2\pi i$, we have 
$$ b=-\frac{\pi}{\mathrm{Im} \tau},\quad a=-b-\eta_1.$$
Write $z=x+iy$ with $x,y\in \mathbb{R}$, we have 
\begin{align*}
2\pi G_x(z)=-\mathrm{Re}(\zeta(z)+az+b\overline{z}),\\
2\pi G_y(z)=\mathrm{Im}(\zeta(z)+az+b\overline{z}),
\end{align*}
and then 
\begin{align*}
2\pi G_{xx}(z)&=\mathrm{Re}(\wp(z)+\eta_1),\\
2\pi G_{xy}(z)&=-\mathrm{Im}(\wp(z)+\eta_1)=G_{yx}(z),\\
2\pi G_{yy}(z)&=-\mathrm{Re}(\wp(z)+\eta_1)-2b.
\end{align*}
Hence
\begin{align*}
\det D^2G(z)
=\frac{1}{4\pi^2}\left(-2b \mathrm{Re}(\wp(z)+\eta_1)-|\wp(z)+\eta_1|^2\right).
\end{align*}
Recall that 
$$G_2(z_1, z_2)=G(z_1-z_2)-\frac{1}{2}\sum_{j=0}^3\left(G(z_1-\frac{\omega_j}{2})+G(z_2-\frac{\omega_j}{2})\right),  $$
combining with $4\wp(2z)=\sum_{j=0}^3\wp(z-\frac{\omega_j}{2}),$ we have 
\begin{align*}
G_{2x_1x_1}&
=\frac{1}{2\pi}\mathrm{Re}(\wp(z_1-z_2)-2\wp(2z_1)-\eta_1)\\
G_{2x_1y_1}&
=\frac{1}{2\pi}\mathrm{Im}(-\wp(z_1-z_2)+2\wp(2z_1)+\eta_1)\\
G_{2x_1x_2}&
=-\frac{1}{2\pi}\mathrm{Re}(\wp(z_1-z_2)+\eta_1)\\
G_{2x_1y_2}&
=\frac{1}{2\pi}\mathrm{Im}(\wp(z_1-z_2)+\eta_1)\\
G_{2y_1y_1}
&=\frac{1}{2\pi}\left[\mathrm{Re}(-\wp(z_1-z_2)+2\wp(2z_1)+\eta_1)+2b\right]\\
G_{2y_1x_2}&
=\frac{1}{2\pi}\mathrm{Im}(\wp(z_1-z_2)+\eta_1)\\
G_{2y_1y_2}&
=\frac{1}{2\pi}\left[\mathrm{Re}(\wp(z_1-z_2)+\eta_1)+2b\right]\\
G_{2x_2x_2}&
=\frac{1}{2\pi}\mathrm{Re}(\wp(z_1-z_2)-2\wp(2z_2)-\eta_1)\\
G_{2x_2y_2}&
=\frac{1}{2\pi}\mathrm{Im}(-\wp(z_1-z_2)+2\wp(2z_2)+\eta_1)\\
G_{2y_2y_2}
&=\frac{1}{2\pi}\left[\mathrm{Re}(-\wp(z_1-z_2)+2\wp(2z_2)+\eta_1)+2b\right].
\end{align*}

Let a pair $\{z_1, z_2\}$ with $z_1,z_2\in E_\tau\setminus E_\tau[2]$ be a critical point of $G_2$, by Theorem \ref{lem-G2-2}, 
one of the following two cases hold:
\begin{enumerate}
\item[Case 1.] $z_2=-z_1$;
\item[Case 2.] $\{z_1,z_2\}\in \cup_{j=1}^3Y_j$ and $2z_1$ is a critical point of $G$. 
\end{enumerate}

Denote by $A:=\wp(2z_1)+\eta_1$. 
\\

\noindent{\bf Case 1.} If $z_2=-z_1$, we have 
\begin{align*}
\det D^2G_2(z_1, z_2)=\left|\begin{array}{cccc}-\mathrm{Re}A& \mathrm{Im}A&-\mathrm{Re}A &\mathrm{Im}A \\
\mathrm{Im}A &\mathrm{Re}A+2b&\mathrm{Im} A& \mathrm{Re}A+2b\\-\mathrm{Re}A& \mathrm{Im}A&-\mathrm{Re}A &\mathrm{Im}A \\\mathrm{Im}A &\mathrm{Re}A+2b&\mathrm{Im} A& \mathrm{Re}A+2b\end{array}\right|\equiv 0,
\end{align*}
the trivial critical point $\{z_1, z_2\}$ is degenerate. In fact, due to  the nondegerate critical points are isolated, we can get this degeneracy property directly.
\\

\noindent{\bf Case 2.}  If $\{z_1,z_2\}\in Y_k\setminus Y_0$, i.e.,   $z_1-z_2=\frac{\omega_k}{2}$, and $2z_1$ is a critical point of $G$ and $2z_1\neq \frac{\omega_k}{2}$ by the proof of Theorem \ref{thm-critical-G2}, then $\wp(z_1-z_2)=e_k$ and $\wp(2z_2)=\wp(2z_1)$.   Denote by $B:=2A-e_k-\eta_1$, we have 
\begin{align*}
&(2\pi)^4\det D^2G_2(z_1, z_2)\\
&\left|\begin{array}{llll}-\mathrm{Re}B& \mathrm{Im}B&-\mathrm{Re}(2A-B) &\mathrm{Im}(2A-B) \\
\mathrm{Im}B &\mathrm{Re}B+2b&\mathrm{Im} (2A-B)& \mathrm{Re}(2A-B)+2b\\-\mathrm{Re}(2A-B)& \mathrm{Im}(2A-B)&-\mathrm{Re}B &\mathrm{Im}B \\  \mathrm{Im}(2A-B) &\mathrm{Re}(2A-B)+2b&\mathrm{Im} B& \mathrm{Re}B+2b\end{array}\right|\\
=&-16|B-A|^2\left(-|A|^2-2b\mathrm{Re}A\right)
\end{align*}
Notice that $\det D^2G(2z_1)
=\frac{1}{4\pi^2}\left(-2b \mathrm{Re}A-|A|^2\right)$ and $B-A=\wp(2z_1)-e_k$, we have 
$$\det D^2G_2(z_1,z_2)=-\frac{4|\wp(2z_1)-e_k|^2}{\pi^2}\det D^2G(2z_1).$$
Notice that  $2z_1\neq \frac{\omega_k}{2}$, then $\wp(2z_1)\neq e_k$, thus 
\begin{align*}
\det D^2G_2(z_1,z_2)>0 \qquad \text{iff}\qquad  \det D^2G(2z_1)<0;
\end{align*}
\begin{align*}
\det D^2G_2(z_1,z_2)=0 \qquad \text{iff}\qquad  \det D^2G(2z_1)=0;
\end{align*}
\begin{align*}
\det D^2G_2(z_1,z_2)<0 \qquad \text{iff}\qquad  \det D^2G(2z_1)>0.
\end{align*}

Therefore, Theorem \ref{thm-intro-deg} is obtained by
 the following degeneracy criterion of critical points for the Green function $G(z)$. 

\begin{Theorem}\cite{CFL2025-1,Lin,LW2017-1}Let $\tau\in \mathbb{H}$. The Green function  $G(z;\tau)$ always has three trivial critical points $\frac{\omega_j}{2}, \,j=1,2,3$.  Moreover, 
\begin{enumerate} 
\item 
if $\tau\in \mathcal{E}$, then $G$ has a pair of nontrivial critical points $\pm\theta$.
\begin{enumerate}
\item[(1a)]  all nontrivial critical points $z$ of $G$ are nondegenerate minimal points, in particular, $\det D^2G(z)>0$;
\item[(1b)]  all trivial critical points $z$ of $G$  are nondegenerate saddle points, i.e., $\det D^2G(z)<0$.
\end{enumerate}
\item if $\tau\in  \mathbb{H}\setminus \mathcal{E}$, then $G$ has no nontrivial critical points. Futhermore,
\begin{enumerate}
\item[(1a)]  if $\tau\in \mathbb{H}\setminus \overline{\mathcal{E}}$, then all trivial critical points of $G$  are nondegenerate.
\item[(1b)]  if $\tau\in \partial{\mathcal{E}}$, then exactly one trivial critical point of $G$ is degenerate and the other two trivial critical points of $G$ are nondegenerate saddle points, i.e., $\det D^2G(z)<0$.
\end{enumerate}
\end{enumerate}
\end{Theorem}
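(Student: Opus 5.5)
The plan is to reduce everything to the explicit Hessian formula for $G$ derived at the start of Section \ref{sec-deg}, combined with a topological index count and the monodromy description of the nontrivial critical points. First, the three trivial critical points come for free. Since $G$ is even and $\Lambda_\tau$-periodic, $\nabla G(\frac{\omega_j}{2})=-\nabla G(-\frac{\omega_j}{2})=-\nabla G(\frac{\omega_j}{2})$, so $\nabla G(\frac{\omega_j}{2})=0$. The dichotomy ``$3$ or $5$ critical points'' is the result of \cite{LW2010} quoted before Theorem \ref{thm-critical-G2}.

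Next I rewrite the Hessian formula. With $A:=\wp(z)+\eta_1$ and $b=-\pi/\mathrm{Im}\,\tau$, the formula gives
$$4\pi^2\det D^2G(z)=b^2-|A+b|^2 .$$
So $z$ is degenerate iff $|\wp(z)+\eta_1+b|=|b|$, and $\det D^2G(z)>0$ iff this quantity is $<|b|$. At a nondegenerate critical point the index of $\nabla G$ is $\mathrm{sgn}\det D^2G$: it is $+1$ at a local extremum and $-1$ at a saddle. Near the singularity $0$ we have $G\sim -\frac{1}{2\pi}\log|z|$, which contributes index $+1$. Poincar\'e--Hopf on the torus ($\chi=0$) then gives
$$\sum_{\text{critical }p}\mathrm{ind}_p\nabla G=-1 .$$
This count drives the sign bookkeeping in all cases.

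Case $\tau\in\mathcal E$. The main step is (1a): $\pm\theta$ are nondegenerate with $\det D^2G(\pm\theta)>0$. By \cite{LW2010}, $\pm\theta$ correspond to the unitary Lam\'e equation $y''=(2\wp(z)+B)y$, with $\wp(\theta)$ determined by $B$. The monodromy data $(r,s)\in\mathbb R^2\setminus\frac12\mathbb Z^2$ are given by relations analogous to (\ref{eqn-intro-sr}): $r+s\tau=\theta$ and $r\eta_1+s\eta_2=\zeta(\theta)$. Differentiating $\zeta(z)-\eta(z)=0$ shows that $D^2G(\theta)$ is, up to a positive factor, the Jacobian of the local map $z\mapsto(\zeta(z)-\eta(z))$. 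I would relate this Jacobian to the Jacobian of $B\mapsto(r,s)$, i.e.\ to the nonvanishing of the derivative of the premodular form $Z_{r,s}(\tau)$, as in \cite{CFL2025-1,Lin}. That relation yields nondegeneracy. The positive sign then follows by continuity of $\tau$ inside the connected triangle component of $\mathcal E$: the determinant cannot cross zero, and at one explicit $\tau$, for example near the corner where $\theta$ bifurcates, one checks that $|\wp(\theta)+\eta_1+b|<|b|$.

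Once (1a) holds, the index count gives $(+1)+(+1)+\sum_{\text{trivial}}=-1$, so the three trivial indices sum to $-3$. Each trivial point therefore has index $-1$, provided it is nondegenerate. Nondegeneracy of the trivial points for $\tau\in\mathcal E$ I would obtain from the characterization of \cite{Lin,LW2017-1}: $\frac{\omega_j}{2}$ is degenerate exactly when $\tau$ lies on the curve where a nontrivial pair collides with $\frac{\omega_j}{2}$, and this curve is $\partial\mathcal E$. This gives (1b).

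Case $\tau\notin\mathcal E$. Only the three trivial points remain. For $\tau\notin\overline{\mathcal E}$ the same characterization shows none of them lies on the degeneracy locus $|e_j+\eta_1+b|=|b|$, so they are nondegenerate; the count forces two saddles and one extremum. For $\tau\in\partial\mathcal E$, the pair $\pm\theta$ has merged into exactly one half period $\frac{\omega_{j_0}}{2}$ (the three sides of each triangle correspond to the three $j$). That point is degenerate. The other two are limits of nondegenerate saddles in $\mathcal E$, and they stay off the degeneracy locus, so they remain saddles with $\det D^2G<0$.

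I expect the real obstacle to be the nondegeneracy in (1a), namely linking $\det D^2G(\theta)$ to the nonvanishing of a monodromy Jacobian. Everything else is either the sign identity above, the index count, or the identification of $\partial\mathcal E$ with the degeneracy curves of the half periods.
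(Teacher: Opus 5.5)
\textbf{Verdict.} There is a genuine gap. The paper itself does not prove this statement; it quotes it from \cite{CFL2025-1,Lin,LW2017-1} and uses it as a black box in Section~\ref{sec-deg}. As a self-contained proof, your outline gets the bookkeeping right: $\nabla G(\frac{\omega_j}{2})=0$ by evenness, $4\pi^2\det D^2G=b^2-|\wp(z)+\eta_1+b|^2$, the Poincar\'e--Hopf count $\sum\mathrm{ind}=-1$, and the index argument turning (1a) into ``all half periods are saddles'' once they are known to be nondegenerate. But every analytic input that actually carries the theorem is missing.

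\textbf{The gap in (1a).} Your reformulation of (1a) is circular. For $F(z)=\zeta(z)-\eta(z)$ one has $F_z=-(\wp(z)+\eta_1+b)$ and $F_{\bar z}=b$, so the real Jacobian of $F$ is $|F_z|^2-|F_{\bar z}|^2=-4\pi^2\det D^2G(z)$. The factor is negative, not positive, because $F=-2\pi(G_x-iG_y)$ is the conjugated gradient. Nonvanishing of this Jacobian at $\theta$ is therefore literally the claim to be proved.

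Moving to $Z_{r,s}(\tau)=\zeta(r+s\tau)-r\eta_1-s\eta_2$ does not supply it either. Even if you know that the zeros of $Z_{r,s}$ in $\tau$ are simple, the implicit function theorem applied to $Z_{r,s}(\tau)=0$ only gives $\det\frac{\partial\tau}{\partial(r,s)}=-4\pi^2\,\mathrm{Im}\,\tau\,|\partial_\tau Z_{r,s}|^{-2}\det D^2G(\theta)$. So nondegeneracy becomes the equally nontrivial statement that $(r,s)\mapsto\tau(r,s)$ is a local diffeomorphism. Your sentence ``that relation yields nondegeneracy'' is exactly the result of \cite{CFL2025-1}, and it is not proved here.

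\textbf{The gap for the half periods.} In (1b), in (2a), and for the strict inequality and the ``exactly one'' in (2b), you quote the characterization that $\frac{\omega_j}{2}$ is degenerate precisely on the part of $\partial\mathcal{E}$ where $\pm\theta$ collide with it. That characterization is the content of these items, and the index count cannot replace it. Local index conservation only shows that $\det D^2G(\frac{\omega_j}{2};\tau)$ cannot change sign inside $\mathcal{E}$ or inside $\mathbb{H}\setminus\overline{\mathcal{E}}$, since a sign change would force new critical points to bifurcate from $\frac{\omega_j}{2}$. It does not stop the real-analytic function $\mathrm{Im}\,\tau\,|e_j+\eta_1|^2-2\pi\,\mathrm{Re}(e_j+\eta_1)$, whose zeros are exactly the degeneracies of $\frac{\omega_j}{2}$, from touching zero there. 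Such a point would be a degenerate saddle of index $-1$, fully compatible with $\sum\mathrm{ind}=-1$. Excluding it needs an actual analysis of these curves.

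\textbf{The sign check in (1a).} You place it where it is hardest: as $\theta$ bifurcates from a half period, $\det D^2G(\theta)\to 0$, so the sign there needs a bifurcation computation. A clean choice is $\tau=e^{\pi i/3}$. There $\theta=\frac{1+\tau}{3}$ is fixed modulo $\Lambda_\tau$ by $z\mapsto e^{2\pi i/3}z$, so $D^2G(\theta)$ commutes with a rotation by $2\pi/3$. It is therefore the scalar matrix $\frac{1}{2\,\mathrm{Im}\,\tau}I$, and $\det D^2G(\theta)>0$. Even then, the continuity argument needs every component of $\mathcal{E}$ to contain an $SL(2,\mathbb{Z})$-image of $e^{\pi i/3}$. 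That again comes from the structure theorem for $\mathcal{E}$ in \cite{CLW2018,Lin}.
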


\end{document}